\newcommand{\fontas}{\fontsize{8.36}{10.36}\selectfont}
\def\sfrac#1#2{#1/#2}
\newcommand{\lleft}{\left}
\newcommand{\rrvert}{\vert}
\newcommand{\rright}{\right}
\newcommand{\llvert}{\vert}
\newtheorem{teo}{Theorem}
\newtheorem{Lemma}[teo]{Lemma}
\newtheorem{Cor}[teo]{Corollary}
\renewcommand{\mid}{|}
\newcommand{\bR}{\mathbb{R}}
\renewcommand{\Re}{\mathbb{R}}
\newcommand{\Lap}{\Delta}
\renewcommand{\phi}{\varphi}
\newcommand{\sO}{\mathcal{O}}
\newcommand{\sE}{\mathfrak{E}}
\newcommand{\dist}{\operatorname{dist}}
\newcommand{\osc}{\operatorname{osc}}
\newcommand{\area}{\operatorname{area}}
\newcommand{\Cut}{\operatorname{Cut}}
\newcommand{\E}{\mathbb{E}}
\newcommand{\Prob}{\mathbb{P}}
\newcommand{\OldSigma}{\tau}
\newcommand{\OldTau}{\sigma}
\begin{document}
\begin{frontmatter}

\title{A stochastic target approach to Ricci flow on~surfaces}
\runtitle{Stochastic approach to Ricci flow}

\begin{aug}
\author[A]{\fnms{Robert W.}~\snm{Neel}\thanksref{M1,T1}\ead[label=e1]{robert.neel@lehigh.edu}}
\and
\author[B]{\fnms{Ionel}~\snm{Popescu}\corref{}\thanksref{M2,M3,T2}\ead[label=e2]{ionel.popescu@imar.ro}\ead[label=e3]{ipopescu@math.gatech.edu}}
\runauthor{R. W. Neel and I. Popescu}
\affiliation{Lehigh University\thanksmark{M1}, Georgia Institute of Technology\thanksmark{M2} and\\ ``Simion Stoilow'' Institute of Mathematics of Romanian Academy\thanksmark{M3}}
\address[A]{Department of Mathematics\\
Lehigh University\\
Christmas-Saucon Hall\\
14 East Packer Avenue\\
Bethlehem, Pennsylvania 18015\\
USA\\
\printead{e1}}
\address[B]{School of Mathematics\\
Georgia Institute of Technology\\
686 Cherry Street\\
Atlanta, Georgia 30332\\
USA\\
and\\
``Simion Stoilow'' Institute of Mathematics\\
\quad of Romanian Academy\\
21 Calea Grivi\c tei\\
Bucharest\\
Romania\\
\printead{e2}\\
\phantom{E-mail: }\printead*{e3}}
\end{aug}
\thankstext{T1}{Supported in part by an NSF Postdoctoral Research Fellowship.}
\thankstext{T2}{Supported in part by the Marie Curie Action Grant
PIRG.GA.2009.249200
and also by a grant of the Romanian National Authority for Scientific
Research CNCS---UEFISCDI, project number PN-II-RU-TE-2011-3-0259.}

%
\received{\smonth{9} \syear{2012}}
%
\revised{\smonth{1} \syear{2015}}

%
\begin{abstract}
We develop a stochastic target representation for Ricci flow and
normalized Ricci flow on smooth,
compact surfaces, analogous to Soner and Touzi's representation of mean
curvature flow.
We prove a verification/uniqueness theorem, and then consider geometric
consequences of this stochastic representation.

Based on this stochastic approach, we give a proof that, for surfaces
of nonpositive
Euler characteristic, the normalized Ricci flow converges to a constant
curvature metric
exponentially quickly in every $C^k$-norm. In the case of $C^{0}$ and
$C^{1}$-convergence,
we achieve this by coupling two particles. To get $C^{2}$-convergence
(in particular,
convergence of the curvature), we use a coupling of three particles.
This triple coupling
is developed here only for the case of constant curvature metrics on
surfaces, though we
suspect that some variants of this idea are applicable in other
situations and therefore
be of independent interest. Finally, for $k\ge3$, the
$C^{k}$-convergence follows relatively easily using induction and
coupling of two particles.

None of these techniques appear in the Ricci flow literature and thus
provide an alternative approach to the field.
\end{abstract}

%
\begin{keyword}[class=AMS]
\kwd{60H30}
\end{keyword}
\begin{keyword}
\kwd{Ricci flow}
\kwd{stochastic target problem}
\kwd{Brownian motion}
\kwd{coupling}
\end{keyword}
\end{frontmatter}

\section{Introduction}\label{sec1}

In~\cite{SonerTouzi}, Soner and Touzi give a characterization of
various extrinsic geometric flows (with ambient space $\Re^n$),
including mean curvature flow, as stochastic target problems. More
specifically, they introduce the relevant target problems and then
prove associated verification theorems, namely theorems showing that if
the curvature flow has a smooth solution for an interval of time $t\in
[0,T)$, then the solution agrees with the solution to the stochastic
target problem on this interval. In the first part of this paper, we
develop a similar characterization of Ricci flow (and normalized Ricci
flow) on compact surfaces, including the relevant verification theorems
(see Theorem \ref{teoVeri}). We then briefly discuss time-dependent
bounds on the solution to both normalized and un-normalized Ricci flow
and estimates on the blow-ups of solutions to Ricci flow in the cases
of nonzero Euler characteristic, all obtained from the stochastic
formulation of the flow. In the remainder of the paper, we use this
stochastic representation to prove that, for a smooth, compact surface
of nonpositive Euler characteristic, given that a smooth solution to
the normalized Ricci flow exists for all time (which is well known from
the literature), it converges to a constant curvature metric
exponentially fast in $C^\infty$ (see Theorem \ref{teoCInfyConv} for
a precise statement).

Ricci flow on smooth, compact surfaces is essentially completely
understood as, for instance, \cite{ChKn1,CLN,Ham1}. Nonetheless, one
feature of our approach is that probability often provides an appealing
intuition, as in the case of Brownian motion and heat flow. Thus, if
Ricci flow is thought of as a kind of ``heat equation for curvature,''
it is natural to want to extend the analogy to include a diffusion
interpretation. For example, it is nice to see the convergence of a
manifold under normalized Ricci flow to a constant curvature limit as
the equi-distribution of the metric, and as a result of the curvature,
from a probabilistic perspective.

More generally, one might ask about the potential merits of developing
stochastic techniques for Ricci flow (or other curvature flows). One
obvious point to be made here is that one gets a representation of the
solution and, at least in the theory of linear second-order PDEs, this
has turned out to be extremely versatile in extracting properties of
the solutions. As we will see, the stochastic tools we employ are good
enough to give a different proof of a main result in the theory of
Ricci flow on surfaces with the bonus that we see the ``averaging
property of the curvature'' as a consequence of coupling, which is a
probabilistic manifestation of ergodicity. Another motivation for such
an endeavor is that the stochastic target formulation is fairly
insensitive to regularity, and thus potentially useful in formulating
notions of weak solutions. Indeed, in a second paper, Soner and
Touzi~\cite{ST2} show that generalized solutions to various extrinsic
curvature flows can also be understood in terms of stochastic target
problems. Also stemming from these ideas, we note that stochastic
approaches to PDEs can lend themselves to the development of
probabilistic numerical schemes (as in \cite{Touzi}), but we do not
touch this subject here.

Our framework is not the most general one. We presumably could have
worked in a little more generality, but to keep the ideas as appealing
and clear as possible, we decided to study surfaces, which are the
traditional starting point for studying Ricci flow.

We point out that, as noted in \cite{CheriditoST}, stochastic target
problems of certain kind are equivalent to second-order backward
stochastic differential equations. As discussed there, second-order
backward SDEs are natural stochastic objects to associate with fully
nonlinear PDEs. Thus, one could presumably recast the results of this
paper in those terms. Nonetheless, we have chosen to adopt the
stochastic target approach because it seems more geometrically
intuitive and visually appealing, and because it puts Ricci flow and
mean curvature flow in a similar framework.

There are few papers on stochastic analysis and Ricci flow, for
instance, \mbox{\cite{ACT,r3,KACP,r4,r2,r1}}. The ones that are somewhat
closer to our work are \cite{ACT} and \cite{KACP}. These papers
investigate the Brownian motion (and the associated parallel transport)
with respect to a time changing metric on a manifold of any dimension,
not only on surfaces. Using stochastic analysis, they also develop a
Bismut-like formula to represent the gradient of solutions to heat-type
flows with respect to the time-dependent metric. In particular, this
leads to gradient estimates for the corresponding solutions.

We use in this paper a different tool, namely couplings. Coupling is a
very useful thing and has been successfully used in a variety of
situations. There is a large body of research on this and we will point
out only some selections without any claim of completeness on the
subject. One of the most useful on is the \emph{mirror coupling}~of
Brownian motions introduced by Lindvall and Rogers in \cite{Lin-Rog} in
the Euclidean setting and by Cranston \cite{CranstonJFA} and Kendall
in \cite{coup8} on manifolds. Different types of couplings, as, for
instance, the synchronous coupling appearing in \cite
{MR1768241,MR1001020,MR1080491} and shy coupling which is treated in
\cite{Burdzy-Benjamini,coup11,MR3077525} or even fixed-distance
couplings on manifolds in \cite{fixeddist}. There are lots of
applications of the coupling in geometric and analytic problems as it
can be seen from an excerpt of the literature in \cite
{coup10,coup5,coup3,coup6,coup8,coup9,coup7,coup11,coup4,coup1,coup2}.

One of the main techniques in the present work is the mirror coupling
applied to time changed Brownian motions. It turns out to be an
extremely fruitful tool for proving estimates in the context of Ricci
flow. Though the coupling and the Bismut formula produce in several
cases similar estimates, particularly when it comes to gradient
estimates, we do not know how to get a nice and useful version of the
Bismut formula for the second-order derivatives. This is one of the
reasons we prefer to deal with the coupling techniques which reveals
its full power. The idea of dealing with the second-order derivatives
comes from \cite{CranstonTripleCouple}, where a coupling of three
particles is used to estimate second-order derivatives of harmonic
functions on Euclidean domains. This triple coupling indicated by
Cranston uses a certain symmetry to get a key cancellation in the
estimation of the Hessian. This symmetry is not surprising in the flat
case. However, there are immediate technical challenges for a similar
construction on manifolds, and the way it works in the flat case does
not seem to work on arbitrary manifolds for the time changed Brownian
motions. Nevertheless, it turns out that we can construct such a triple
coupling which has enough good properties in the case of surfaces of
constant curvature.

We continue with a few more observations about the present work. We do
not prove the existence of solutions to the target problem directly;
rather, the verification theorems proceed from the assumption that the
Ricci flow admits a smooth solution. In the case of normalized Ricci
flow, we have long-time existence as proved in \cite{Cao} and \cite
{Ham1}. However, an immediate consequence of such a verification
theorem is that the solution (to the flow) is unique.

In contrast to the standard proof of the convergence to constant
curvature, we deal directly with the metric itself (and its
derivatives), rather than introducing an auxiliary PDE satisfied by the
curvature. We use uniformization to work with an underlying metric
which has constant curvature and is in the same conformal class as the
initial metric. One might hope to extend these arguments to more
general situations, but for a first paper on this approach
uniformization makes the analysis cleaner and reveals the power of the
coupling in a nice way.

The outline of the paper is as follows. We first describe the
stochastic target problem in Section~\ref{s1} giving a fair amount of
detail, since it is a somewhat nonstandard control problem. Then, in
Section~\ref{s2} we prove the verification/uniqueness theorem, namely
that, if there is a smooth solution to the Ricci flow (or normalized
Ricci flow) on some interval of time, then it agrees with the solution
to the stochastic target problem.

Section~\ref{s3} is a short section showing how one can use the
representation to prove that the unnormalized Ricci flow develops
singularities (in certain cases) either in finite time or in infinite
time. In Section~\ref{s4}, we develop the a priori bounds for the
stochastic target problem. As a consequence, we obtain the exponential
convergence in the $C^{0}$-norm of the normalized flow in the case of
$\chi(M)<0$ [as usual, $\chi(M)$ denotes the Euler characteristic of
$M$]. We also include a short discussion of the blow up of the
unnormalized Ricci flow in the cases $\chi(M)>0$ and $\chi(M)<0$,
which is in tune with the previous section's findings, although this
time assuming uniformization.

Section~\ref{s5} introduces and proves the main result on mirror
coupling for the time changed Brownian motions associated to the target
problems. This coupling is well defined for short times, but the main
challenge is to show that the coupling extends beyond the cut locus.
This is done using the geometric structure of the cut locus on surfaces
of Euler characteristic less than or equal to $0$. We should also point
out that there is a coupling of Brownian motions constructed with
respect to time-varying metrics (such as Ricci flow) in \cite{r4}, but
it differs from our situation here.

In Section~\ref{s6}, we start the main analysis of the convergence of
normalized Ricci flow. We prove the nontrivial fact that in Euler
characteristic zero, the normalized flow converges exponentially fast
in the $C^{0}$-topology. This uses the result from the previous section
combined with the comparison of the distance process with a Bessel
process in order to estimate the coupling time. This is a fundamentally
probabilistic idea. Combining this result with those coming from the a
priori estimates proves that, for nonpositive Euler characteristic,
the flow converges in the $C^{0}$-topology exponentially fast.

The next task is to prove that the convergence takes place also in
$C^{1}$, or in other words that the gradient of the metric converges
exponentially fast. This is done in Section~\ref{s7}, again using
coupling. However, the point here is a little different. We use the
coupling for particles started close to one another and estimate the
coupling time in terms of the gradient of the metric (more precisely
the conformal factor of the metric) and the initial distance. This in
turn yields a functional inequality satisfied by the $C^{0}$-norm of
the gradient which is contained in Lemma~\ref{gel1}. It turns out
that this functional inequality is strong enough to produce the
exponential convergence.

Going forward, Section~\ref{s8} is dedicated to the triple coupling
used in a crucial way for the Hessian estimates. We exploit in an
essential way the constant curvature properties of the underlying
metric. We have two mirror coupled particles $x$ and $y$ and another
middle particle $z$ which is moving on the geodesic between them which
is described by the distance $\rho_{1}$ from $z$ to $x$, or
alternatively, the distance $\rho_{2}$ from $z$ to $y$. One of the
main interests is the symmetry with respect to swapping $\rho_{1}$ and
$\rho_{2}$. The other thing thrust of the investigation is as follows.
Assuming that $x$ and $y$ are time changed Brownian motions, we study
the conditions under which $z$ is a time changed Brownian motion with a
drift. This is a key point in the Hessian estimates.

Section~\ref{s9} covers the Hessian estimates. Here, we use the
results from the previous sections, for instance, the exponential decay
of the flow in the $C^{1}$-topology and the triple coupling. As in the
case of the gradient, we end up with a functional inequality for the
$C^{0}$-norm of the Hessian as in Lemma~\ref{hel2}. It turns out that
this suffices to conclude the exponential convergence.

The last section proves the $C^{k}$-convergence of the flow. This is
done essentially using the Ricci flow equation and induction. It is
important to mention here that in the flat case, we still use couplings.

A few words about the sphere case, which requires some finer analysis.
There are several obstacles we have to overcome. On one hand, the a
priori estimates give bounds which blow up in finite or infinite time.
However, these estimates are simply bounds of a stochastic differential
equation in terms of the ODE in which the martingale is killed off, and
eventually can likely be refined. Further, in the case of nonpositive
Euler characteristic, there is a unique stationary solution to the
normalized Ricci flow with a given volume (in a given conformal class),
and thus one has to prove that the flow converges to this uniquely
determined solution. In the case of the sphere, this is not the case,
and thus convergence is harder to establish, because we do not know
beforehand toward which stationary solution the flow wants to converge
(this is related to the issue of Ricci solitons). Therefore, the
strategy we used in this paper for $\chi(M)\le0$ needs some
refinements if it's to address the case of positive Euler characteristic.

\section{Stochastic target formulation}\label{s1}

\subsection{Ricci flow}

Consider a smooth, compact Riemannian surface $(M, h)$, that is, $M$ is
a smooth, compact manifold without boundary of dimension two and $h$ a
smooth Riemannian metric on $M$. Any other smooth metric in the same
conformal class as $h$ can be written as $g=\bar{u}h$ for some smooth,
positive function $\bar{u}$. The Ricci curvature of any metric metric
$g$ is given by
%
\begin{equation}
\label{e1} 2\operatorname{Ric}_g=R_g
g=2K_{g}g,
\end{equation}
where $R_g$ is the scalar curvature and $K_{g}$ is the Gauss curvature.
The \emph{Ricci flow} is defined as the evolution of the metric
$g_{t}$ according to
%
\begin{equation}
\label{e0r} \partial_{t}g_{ij}=-2\operatorname{Ric}_{ij},
\end{equation}
where $\operatorname{Ric}$ is the Ricci tensor. From this, it is easy to see
that the Ricci flow preserves the conformal class in two dimensions,
and thus it becomes an evolution equation for the conformal factor
$\bar{u}_{t}$. In particular, the Ricci flow corresponds to
$\bar{u}$
evolving by
%
\begin{equation}
\label{EqnRFu} \partial_t \bar{u}_t =
\Lap_h \log\bar{u}_t - 2K_h,
\end{equation}
where $K_h$ is the Gauss curvature of $(M,h)$. In passing from (\ref
{e0r}) to (\ref{EqnRFu}), we have already used the fact that if
$g=uh$, for two metrics, $g$ and $h$, then (see \cite{CLN}, Exercise 2.8)
%
\begin{equation}
\label{e02} R_{g}=\frac{1}{u}(R_{h}-
\Delta_{h}\log u),
\end{equation}
where the $\Delta_{h}$ is the Laplacian with respect to the metric $h$.

This is a nonlinear parabolic equation, and thus the usual
probabilistic methods of solution (diffusions, Feynman--Kac, etc.) do
not apply. Instead, we will adopt a stochastic target approach modeled
on the approach of \cite{SonerTouzi} to mean curvature flow, as
mentioned above.

To be more concrete, we assume that the initial metric on $M$ can be
written as $g_0=\bar{u}_0h$ for some smooth, positive $\bar{u}$ and
some metric $h$. There are two natural choices for $h$. Of course, we
can let $h=g_0$ and $\bar{u}_0\equiv1$. Alternatively, the
uniformization theorem, for instance, \cite{sergiu}, Chapter~3, implies
that there is a metric in the same conformal class as $g_0$ which has
constant curvature of $-1$, 0, or 1. Then we can take $h$ to be this
metric, in which case $\bar{u}_0$ is determined by the condition that
$g_0=\bar{u}_0h$. We will find the flexibility of this set-up to
be useful.

As usual, we also wish to introduce the \emph{normalized Ricci flow},
which is defined as
%
\begin{equation}
\label{e03} \partial_{t}g_{ij}=-2\operatorname{Ric}_{ij}+2r
g_{ij},
\end{equation}
where $r$ is the average of the Gauss curvature on $M$ with respect to
the metric $g$. Written in terms of the conformal factor, this is
\[
\partial_{t}\bar{u}_{t}=\Delta_{h}\log
\bar{u}-2K_{h}+2r_{t}\bar{u}_{t}.
\]

Under this flow, the surface is continually rescaled to preserve the
area. Indeed, the Gauss--Bonnet theorem tells us that the integral of
the scalar curvature is
\[
\int K_{g}\,dA_{g}=2\pi\chi(M),
\]
where $\chi(M)$ is the Euler characteristic of $M$ and $A_{g}$ is the
area element of the metric $g$. Consequently, if $r_{t}$ is the average
of the Gauss curvature for $g_{t}$, then
\[
r_{t}=\frac{2\pi\chi(M)}{\area(M,g_{t})},
\]
where $\area(M,g)$ stands for the area of $M$ with the metric $g$.
From here, a straightforward calculation gives that
\[
\partial_{t}\area(M,g_{t})=\partial_{t} \int
\bar{u}_{t}\,d A_{h}=\int\partial_{t}
\bar{u}_{t}\,d A_{h}=-2\int K_{h}\,d
A_{h}+2 r_{t}\int\bar{u}_{t}\,d
A_{h}=0,
\]
which shows that the area is preserved under this evolution and, in
particular, $r_{t}$ does not depend on $t$. Therefore, the flow (\ref
{e03}) preserves the area and
%
\begin{equation}
\label{e05} r=\frac{2\pi\chi(M)}{\area(M,g_{0})}.
\end{equation}
We can now translate (\ref{e03}) into an equation satisfied by the
conformal change $\bar{u}_{t}$ as (recall that $g_{t}=\bar{u}_{t}h$)
%
\begin{equation}
\label{e04} \partial_{t}\bar{u}_{t}=\Delta_{h}
\log\bar{u}-2K_{h}+2r\bar{u}_{t}
\end{equation}
with $r$ the constant from (\ref{e05}).

As is implicit in the above, we see that the set of all smooth metrics
(on $M$) in a given conformal class corresponds to the set of smooth
sections of a one-dimensional bundle over $M$. More concretely, fixing
a ``reference metric'' $h$ and writing any other (smooth) metric (in
the same conformal class) as $\bar{u}h$ induces a global
coordinate $u$
on fibers of this bundle making the total space $E$ diffeomorphic to
$M\times(0,\infty)$. Further, $\bar{u}$ is given as the composition
of the lift from $M$ to $E$ (corresponding to the section) with $u$.
This helps to explain the notation: $u$ is a coordinate on the fibers,
and $\bar{u}$ is the expression of a section in this coordinate.
Because our bundle admits natural global coordinates, we will almost
always work in these coordinates, and thus we will not have much
occasion to consider sections in a coordinate-free notation.

Viewed in this light, it is natural to introduce a new coordinate on
the fibers. Let $p=(1/2)\log u$. Then any other metric in the same
conformal class as $h$ can be written as $g = e^{2\bar{p}} h$ for some
smooth function $\bar{p}\dvtx M\rightarrow\Re$, which is given by
the composition
of the lift $M\rightarrow E$ (corresponding to the section) with $p$. This
coordinate makes the bundle into a real line bundle. In particular, the
metric $h$ corresponds to the zero section, and fiberwise addition
corresponds to composition of conformal changes. However, we will not
need the vector space structure on fibers in what follows; we really
just view the fibers as having a smooth structure. In terms of the
coordinate $p$, the Ricci flow equation becomes
%
\begin{equation}
\label{EqnRFp} \partial_t \bar{p}_t =
e^{-2\bar{p}_t} (\Lap_h \bar{p}_t
-K_h ),
\end{equation}
and the normalized Ricci flow equation becomes [see also \cite{LMa},
equation (1.3.1)]
%
%
\begin{equation}
\label{EqnNRFp} \partial_t \bar{p}_t =
e^{-2\bar{p}_t} (\Lap_h \bar{p}_t
-K_h )+r,
\end{equation}
with $r$ the constant defined in (\ref{e05}), and thus depending only
on the area of $M$ with respect to the initial metric $g_{0}$.

At this point, we see that there is a one-to-one correspondence between
metrics in the same conformal class as $h$, sections of $E$ over $M$,
and functions $\bar{p}$ (where all of these objects are assumed
to be
smooth). Further, there is a one-to-one correspondence between smooth
sections and smooth hypersurfaces of $E$ that intersect each fiber once
and do so transversely; under composition with $p$ this is the same as
the correspondence between smooth functions on $M$ and their graphs in
$M\times\Re$. Viewing metrics as hypersurfaces in the total space $E$
provides a framework for studying Ricci flow which is fairly similar to
that of mean curvature flow and well suited for the stochastic target
approach. Our next task is to define the appropriate target problem.

\subsection{The target problem}

Let $\Gamma(0)$ be the hypersurface corresponding to the initial
metric $g_0$. In spite of our previous efforts to distinguish between
sections over $M$ from their description in a particular coordinate, in
what follows we will fix the global coordinate $p$ on fibers, thus
identifying the fibers with $\Re$, and formulate everything in those
terms. In particular, $\Gamma(0)$ corresponds to the graph of
$\bar{p}_0$. The stochastic target problem is, for any time $t$,
the problem
of determining the set of points such that the controlled process,
starting from such a point, can be made to hit $\Gamma(0)$ (the
``target'') in time $t$ almost surely. Obviously, this requires
specifying the allowed controls and the processes they give rise to. We
will generally explain things for the Ricci flow and then indicate the
analogous results for the normalized Ricci flow in situations where
there are no additional complications.

We start with the infinitesimal picture in normal coordinates. We
choose any point $(q,\hat{p})\in M\times\Re$ and let $(x_1,x_2)$ be
normal coordinates around $q$. Thus, $(x_1,x_2,p)$ are coordinates on a
neighborhood of $\{q\} \times\Re$. We assume that the controlled
process is currently at $(q,\hat{p})$, say at time $\tau$. The
$(x_1,x_2)$-marginal of the controlled process will be
(infinitesimally) Brownian motion on $M$ (with fixed reference metric
$h$), time-changed by $2e^{-2\hat{p}}$. The control consists of
choosing a lift of the tangent plane to $M$ at $q$ into the tangent
space to $E$ at $(q,\hat{p})$. The controlled process has its
martingale part diffusing (infinitesimally) along this lifted plane in
the unique way that gives the right $(x_1,x_2)$-marginal, and has its
drift along the fiber at rate $e^{-2\hat{p}}K_h$ [plus an additional
$-2\pi\chi(M)/\area(M,h)$ for the normalized Ricci flow]. More
precisely, the control consists of a choice of $(a_1,a_2)\in\Re^2$,
for which the processes evolves [infinitesimally, assuming the process
is at $(q,\hat{p})$ at time $\tau$] according to
\[
\lleft[\matrix{ dx_{1,\tau}
\vspace*{3pt}\cr
dx_{2,\tau}
\vspace*{3pt}\cr
dp_{\tau}
} \rright] = \lleft[\matrix{ e^{-\hat{p}} & 0
\vspace*{3pt}\cr
0 & e^{-\hat{p}}
\vspace*{3pt}\cr
e^{-\hat{p}}a_1 & e^{-\hat{p}}a_2 } \rright] \lleft[\matrix{ \sqrt{2}
\,dW_{\tau}^1
\vspace*{3pt}\cr
\sqrt{2}
\,dW_{\tau}^2 } \rright] + \lleft[\matrix{ 0
\vspace*{3pt}\cr
0
\vspace*{3pt}\cr
e^{-2\hat{p}}K_h(q) } \rright],
\]
where $W^1$ and $W^2$ are one-dimensional Brownian motions. Here, we
have written $K_h(q)$ to emphasize that the curvature depends on the
point in $M$. The $\sqrt{2}$ factors (in front of the Brownian
differentials) are needed because the Ricci flow is defined using the
Laplacian, instead of half the Laplacian, and rather than use a
nonstandard normalization for the Ricci flow, we choose to speed up
our Brownian motions (this is analogous to the usual discrepancy
between the analyst's and the probabilist's versions of the heat
equation). This is the controlled process, at least infinitesimally,
corresponding to the Ricci flow. For the normalized Ricci flow, the set
of controls is the same, but the process evolves according to
\[
\lleft[\matrix{ dx_{1,\tau}
\vspace*{3pt}\cr
dx_{2,\tau}
\vspace*{3pt}\cr
dp_{\tau}
} \rright] = \lleft[\matrix{ e^{-\hat{p}} & 0
\vspace*{3pt}\cr
0 & e^{-\hat{p}}
\vspace*{3pt}\cr
e^{-\hat{p}}a_1 & e^{-\hat{p}}a_2 } \rright] \lleft[\matrix{ \sqrt{2}
\,dW_{\tau}^1
\vspace*{3pt}\cr
\sqrt{2}
\,dW_{\tau}^2 } \rright] + \lleft[\matrix{ 0
\vspace*{3pt}\cr
0
\vspace*{3pt}\cr
e^{-2\hat{p}} K_h(q) -r } \rright].
\]

We point out that, for both the Ricci flow and the normalized Ricci
flow, the (infinitesimal) diffusion matrix is
\[
\lleft[\matrix{ 2e^{-2p_{\tau}} & 0 & 2e^{-2p_{\tau}}a_1
\vspace*{3pt}\cr
0 & 2e^{-2p_{\tau}} & 2e^{-2p_{\tau}}a_2
\vspace*{3pt}\cr
2e^{-2p_{\tau}}a_1 & 2e^{-2p_{\tau}}a_2 &
2e^{-2p_{\tau
}}\bigl(a_1^2+a_2^2
\bigr) } \rright]
\]
in $(x_1,x_2,p)$ coordinates at $(q,\hat{p})$, of course.

Having given the infinitesimal picture, we now extend this to a global
description. While it is tempting to simply assert that this follows
immediately from the local description, we prefer to give a more
explicit formulation. There is more than one way to do this, but we
choose to use the bundle of orthonormal frames on $(M,h)$. The
immediate difficulty with extending the above local picture is that,
except in special cases (more on which below), we cannot find
coordinates which are normal at more than one point at a time, or even
a global orthonormal frame. The solution we have in mind is to use the
bundle of orthonormal frames to supply each point along the evolving
process with an orthonormal frame and its associated normal
coordinates. In particular, let $\sO(M)$ be the bundle of orthonormal
frames over $(M,h)$, consisting of points $(q,\mathfrak{e}(q))$ where
$q\in M$ and $\mathfrak{e}(q)$ is an orthonormal basis for $T_qM$ with
metric $h$. We identify $\mathfrak{e}(q)$ with the corresponding
linear isometry from $\Re^2$ to $T_qM$. Let $e_1$ and $e_2$ be the
standard basis for $\Re^2$ and let $\sE(e_i)$ be the corresponding
canonical vector fields. Further, we let $\pi\dvtx\sO(M)\rightarrow M$
be the
usual projection and $\pi_{*}\dvtx T\sO(M)\rightarrow TM$ be the induced
push-forward map on tangent spaces.

The connection with the previous infinitesimal picture comes from the
following relationship between the canonical vector fields and normal
coordinates. Choose a point $q\in M$ and a frame $\mathfrak{e}(q)$
over $q$, and let $(x_1,x_2)$ be normal coordinates [for $(M,h)$] in a
neighborhood of $q$ such that $\partial_{x_i} = \mathfrak{e}(q)(e_i)$
at $q$. Obviously, $\pi_{*} [\sE(e_i)\mid_{(q,\mathfrak
{e}(q))} ]= \partial_{x_i}\mid_q$. Moreover, let $s$ be a smooth
section of $\sO(M)$ in a neighborhood of $q$ which is equal to
$\mathfrak{e}(q)$ at $q$ and horizontal at $q$, meaning that $\partial
_{x_i} s$ are horizontal vectors at $q$. Then $\pi_{*} [\sE
(e_i)\circ s ]$ agrees with $\partial_{x_i}$ to first-order
around~$q$. (Indeed, to show that such a section $s$ exists, start with
normal coordinates and apply the Gram--Schmidt process to $\{\partial
_{x_1},\partial_{x_2}\}$ at every point in a neighborhood of~$q$.)

We also recall the connection between the bundle of orthonormal frames
and Brownian motion on $(M,h)$. We have that $ (\sE(e_1)^2+\sE
(e_2)^2 )/2$ is\break Bochner's Laplacian on $\sO(M)$, and the
corresponding martingale problem is well posed (in the sense of Stroock
and Varadhan, namely that there is a unique solution for any initial
point). We use $\tilde{B}_{\tau}$ to denote such a process.
Projecting $\tilde{B}_{\tau}$ to $M$ gives Brownian motion on $M$,
which we denote $B_{\tau}$. This is the well-known
Eells--Elworthy--Malliavin construction of Brownian motion on $M$, and
we refer the reader to \cite{Elton} or~\cite{Stroock} for a detailed
account on the subject. Moreover, the process $\tilde{B}_{\tau}$ on
$\sO(M)$ should be thought of as the horizontal lift of $B_{\tau}$ on
$M$, and thus as giving Brownian motion equipped\vspace*{1pt} with parallel
transport. In particular, this is how we will typically understand
$\tilde{B}_{\tau}$, as Brownian motion on $M$ endowed with parallel
transport. Finally, we note that the solution to the martingale problem
for Bochner's Laplacian can be realized as the (unique) strong solution
to the natural SDE driven by a standard Brownian motion on $\Re^2$, or
equivalently, two independent, one-dimensional Brownian motions. That
is, $\tilde{B}_{\tau}$ can be realized as the solution to
\[
d \tilde{B}_{\tau} = \sE(e_1)\circ dW_{\tau}^1
+\sE(e_2) \circ dW_{\tau}^2,
\]
where $\circ dW$ indicates that the differential is to be understood in
the Strato\-no\-vich sense.

We now have the necessary background to give the global formulation of
the stochastic target problem for Ricci flow (and the related target
problem for normalized Ricci flow). We write points in $E$ as $(x,p)\in
M\times\Re$ and the controlled process (for the Ricci flow) as
$Y_{\tau} = (x_{\tau},p_{\tau})$. As suggested above, the
$M$-marginal $x_{\tau}$ will be Brownian motion on $M$, time-changed
by $p$, and thus we know from the above that we have parallel transport
of frames (for $T_xM$) along the paths $x_t$ (note that the frame is
always orthonormal relative to the metric $h$). In particular, if we
choose a frame $\mathfrak{e}(x_0)$ at the starting point, then we let
$\mathfrak{e}(x_{\tau})$ denote the parallel transport of this frame
along $x_{\tau}$. Abstractly, the control consists in choosing a lift
of $T_{x_{\tau}}M$ to $T_{(x_{\tau},p_{\tau})}E$. In\vspace*{1pt} terms of our
evolving frame, such lifts can be identified with points of $\Re^2$.
This is the time to formally introduce the control process. In what
follows, $(\Omega,\mathcal{F},\Prob)$ is a probability space where
the Brownian motion $(W^{1},W^{2})$ is defined and the reference
filtration involved here is $\mathcal{F}_{\tau}$, the one generated
by the Brownian motion.

%
\begin{definition}
For a fixed time $t>0$, an admissible control process $A$ is a bounded
map $A\dvtx[0,t]\times M \times\Omega\to\Re^2$ which is continuous in
the first two coordinates, and such that for each $(x,\tau)\in M\times
[0,t]$, $A(\tau,x)\dvtx\Omega\to\Re^{2}$ is $\mathcal{F}_{\tau
}$-measurable. We write this in components $A=(a_1,a_2)$.
\end{definition}

We will explain below in the first remark of this section why we
require the control to be bounded.

If we start our process from a point $Y_0=(x_0,\bar{p}_{0})$ equipped
with a frame $\mathfrak{e}(x_0)$ of $T_{x_0}M$, then it evolves
according to the SDE (note that we are using both It\^o and
Stratonovich differentials)
%
\begin{eqnarray}
\label{e0} dx_{\tau} &=& e^{-p_{\tau}} \Biggl[ \Biggl[\sum
_{i=1}^2 \mathfrak{e}(x_{\tau})
(e_i)\sqrt{2} \circ dW_{\tau}^i \Biggr] \Biggr],
\nonumber\\[-8pt]\\[-8pt]\nonumber
dp_{\tau} &=& e^{-p_{\tau}} \Biggl[\sum_{i=1}^2
a_i \sqrt{2} \,dW_{\tau}^i \Biggr] +
e^{-2p_{\tau}} K_h(x_{\tau}) \,d\tau.
\end{eqnarray}
This equation comes with the following convention.

\begin{convention}
Whenever we have a bracket $A[\![M\circ dN]\!]$, the order of operations is
that we first write $M\circ dN=M \,dN+\frac{1}{2}\,d\langle M,N \rangle$
in It\^o form and then multiply everything by $A$. Thus, we have
\[
A[\![M\circ dN ]\!]=(AM)\,dN+\tfrac{1}{2}A\,d\langle M,N \rangle
\]
as opposed to the common writing
\[
A[M\circ dN ]=AM \,dN+\tfrac{1}{2}\,d\langle AM,N \rangle,
\]
where $\langle M,N \rangle$ is the quadratic variation of $M$ and $N$.
For the standard It\^o differentials, the meaning is the standard one, namely
\[
A[M\,dN ]=(AM)\,dN.
\]

Though we can rewrite in a more conventional way
\[
A[\![M\circ dN]\!]=M\circ(A\,dN),
\]
we prefer the notation $A[\![M\circ dN]\!]$ because it is more suggestive
in our context that $A$ is the time change of the process $M\circ dN$.
This becomes even better in the context of equation (\ref{e0}) that
$x_{\tau}$ is simply a time changed Brownian motion on $M$.
\end{convention}

Here, we see that $\mathfrak{e}(x_\tau)(e_i)$ is just the projection
onto $M$ of $\sE(e_i)$ and to ease the notation we will also use the
shortcut $\mathfrak{e}(x_\tau)(e_i)=\mathfrak{e}_{i}(x_\tau)$, or
even more simply~$\mathfrak{e}_{i}$, if there is no confusion
generated by dropping $x_{\tau}$. In particular, the horizontal lift
of $x_\tau$, which we write $\tilde{x}_{\tau}= (x_{\tau},\mathfrak
{e}(x_{\tau}))$ evolves according to
\[
d \tilde{x}_\tau= e^{-p_{\tau}} \Biggl[ \Biggl[\sum
_{i=1}^2 \sE(e_i)\sqrt{2} \circ
dW_{\tau}^i \Biggr] \Biggr]\qquad\mbox{on }\sO(M),
\]
and the first line of (\ref{e0}) is just the projection of this onto
$M$. We choose to write (\ref{e0}) in this way in order to emphasize
that we are ultimately only interested in the evolution of the surface
in $E$ and not in the frame; the frame is only used as a convenience in
order to express the control and the corresponding SDE. We do this
despite the fact that (\ref{e0}) requires evolving the frame
$\mathfrak{e}(x_{\tau})$ as well.

The mixing of It\^o and Stratonovich differentials in (\ref{e0}) is a
result of the fact that horizontal Brownian motion (or just Brownian
motion on $M$) is not easily written globally in It\^o form. To clarify
this, we give the following equivalent characterization, which is just
a consequence of It\^o's formula but one of the important properties of
the above system. For any smooth function $\phi\dvtx[0,T]\times M\times
\Re\to\Re$ (assuming that the process $(x_{\tau},p_{\tau})$ exists
for $\tau\in[0,T]$),
%
\begin{eqnarray}\label{e2}
&& d\phi(\tau,x_{\tau},p_{\tau})\nonumber
\\
&&\qquad = e^{-p_{\tau}}
\sum_{i=1}^{2}\bigl(\mathfrak{e}_{i}(x_{\tau})
\phi+a_{i}\phi'\bigr)\sqrt{2}\,dW_{\tau}^{i}
\nonumber\\[-8pt]\\[-8pt]\nonumber
&&\quad\qquad{} + \Biggl(\partial_{\tau}\phi+e^{-2p_{\tau}}K_{h}(x_{\tau})
\phi'+e^{-2p_{\tau}}\Delta_{h}\phi
\\
&&\hspace*{51pt}{} +e^{-2p_{\tau}}\sum_{i=1}^{2}a_{i}^{2}
\phi''+2e^{-2p_{\tau}}\sum
_{i=1}^{2}a_{i}\mathfrak{e}_{i}(x_{\tau})
\phi' \Biggr)\,d\tau,\nonumber
\end{eqnarray}
where all the ``inside'' functions are evaluated at $(\tau, x_{\tau
},p_{\tau})$, $\mathfrak{e}_{i}(x)\phi$ signifies the derivative
[along $\mathfrak{e}_{i}(x)$] with respect to the second variable of
$\phi$, $\partial_{\tau}\phi$ is the derivative with respect to
$\tau$ variable, and the prime is the partial derivative with respect
to $p$. Note that if we let $(x_1,x_2)$ be appropriate normal
coordinates at a point, then applying this to $x_1$, $x_2$, and $p$
shows that, at that point, this agrees with the infinitesimal picture
described above.


We now take a moment to discuss what we mean by asserting the
controlled process arises from the control via the SDEs just mentioned.
We understand these (systems of) SDEs in the weak sense, that is the
choice of driving Brownian motions $(W^1_{\tau},W^2_{\tau})$ is part
of the solution, not prescribed in advance. Of course, for an arbitrary
choice of controls, a solution need not exist, and if it does, it may
not be unique in law. We will have more to say about this later, after
we introduce the target problem.

Now that we have specified the admissible controls $A_{\tau}$ and
described the evolution of controlled process $Y_{\tau}(A)$ that a
choice of control gives rise to, it is time to explain how this gives
rise to a subset of $E$.

%
\begin{definition}
We define the reachable set at a given time $t\in[0,\infty)$, denoted
$V(t)$, to be the set of points in $E$ for which there exists an
admissible control such that the controlled process, started at this
point and with this control, is in $\Gamma(0)$ at time $t$ almost surely.
\end{definition}

We follow Soner and Touzi \cite{SonerTouzi} in calling this the
reachable set, even though it's the set of points you can reach a fixed
target from, not the set of points you can reach from a fixed starting
point. In order for this to be well defined, we need to show that
$V(t)$ does not depend on the initial choice of frame. Suppose $A_{\tau
}$ is a control such that $Y_{\tau}(A)$, started from $y\in E$ with
initial frame $\mathfrak{e}(y)$, hits $\Gamma(0)$ at time $t$ almost
surely [so that $y\in V(t)$]. If $\tilde{\mathfrak{e}}(y)$ is any
other (orthonormal) frame at $y$, then there is some $r\in O(2)$ such
that $\mathfrak{e}(y)=r\tilde{\mathfrak{e}}(y)$. It's clear that
$A_{\tau}r$ is such that $Y_{\tau}(Ar)$, started from $y\in E$ with
initial frame $\tilde{\mathfrak{e}}(y)$, hits $\Gamma(0)$ at time
$t$ almost surely. Thus, a point of $E$ is in the reachable set or not
independent of what frame we use to express the controlled process, and
so the $V(t)$ are well defined.

For a point in the reachable set, we will indicate the control in the
definition by~$\hat{A}$, if necessary indicating the point in $V(t)$
by writing $\hat{A}(x_0,\bar{p}_{0})$ or $\hat{A}(Y_0)$, and call it
a successful control (this seems linguistically more appropriate than
optimal control). In light of the fact that this depends on the initial
choice of frame, a successful control should really be thought of as a
family of controls indexed by $O(2)$. However, since the dependence on
the initial frame is so simple and not our primary focus, we will
generally gloss over this. We will also write $Y_{\tau}(\hat{A})$ as
$\hat{Y}_{\tau}$. Thus, the defining property of a point in $V(t)$
and the associated successful control is that if we start the process
at this point in $V(t)$, then $Y_t(\hat{A})\in\Gamma(0)$ almost\vspace*{1pt}
surely. This necessarily requires that, for a successful control $\hat
{A}$, there exists a solution to equation (\ref{e0}), and thus a
corresponding process $Y_{\tau}(\hat{A})$ for all time $\tau\in
[0,t]$. In particular, one might imagine that some choice of control
gives rise to a solution under which $p_{\tau}$ blows up prior to $t$
($x_{\tau}$ cannot blow up since $M$ is compact), but such a control
cannot be a successful control by definition. The definition does not
require that a successful control gives rise to a solution $Y_{\tau
}(\hat{A})$ which is unique in law, despite the fact that our notation
makes it look as though $Y_{\tau}$ is always determined by $A$. (So it
is conceivable that a successful control might give rise to another
solution $Y^{\prime}_{\tau}$ that does not almost surely hit the
target.) Nonetheless, we will see below that, as long as a smooth
solution to the Ricci flow exists, there is essentially only one choice
of successful control starting from a given point of $V(t)$, that it is
well behaved, and that this control uniquely determines $\hat{Y}_{\tau}$.

Finally, we recall that the stochastic target problem is the
determination of the reachable sets $V(t)$. We note that $V(0)=\Gamma
(0)$; understanding $V(t)$ for positive $t$ and its relationship to
Ricci flow is the topic of the next section. Looking ahead, what we
will prove is that, assuming the Ricci flow has a smooth solution for
some interval of time, that solution agrees with the solution to the
stochastic target problem in the sense that $V(t)=\Gamma(t)$ at all
times in this interval.

Naturally, we have an analogous set-up which we associate with the
normalized Ricci flow. The set of admissible controls remains the same,
but now the controlled process, which we denote $Y^n_{\tau}(A)$ (the
``$n$'' in the superscript standing for ``normalized'') evolves according to
%
\begin{eqnarray}
\label{e1b} dx_{\tau} &=& e^{-p_{\tau}} \Biggl[ \Biggl[\sum
_{i=1}^2 \mathfrak{e}(x_{\tau
})
(e_i)\sqrt{2} \circ dW_{\tau}^i \Biggr] \Biggr],
\nonumber\\[-8pt]\\[-8pt]\nonumber
dp_{\tau} &=& e^{-p_{\tau}} \Biggl[\sum_{i=1}^2
a_i \sqrt{2} \,dW_{\tau}^i \Biggr] +
\bigl(e^{-2p_{\tau}} K_h(x_{\tau})-r \bigr) \,d\tau.
\end{eqnarray}
Note that the only difference from $Y_{\tau}$ is that the drift of
$p_{\tau}$ has an extra term.

We denote the corresponding reachable sets by $V^n(t)$. We also have
the analog of equation (\ref{e2}) where $e^{-2p_{\tau}}K_{h}$
there is replaced by $e^{-2p_{\tau}}K_{h}-r$:
\begin{eqnarray}
\label{e2b}
&& d\phi(\tau,x_{\tau},p_{\tau})\nonumber
\\
&&\qquad = e^{-p_{\tau}}
\sum_{i=1}^{2}\bigl(\mathfrak{e}_{i}(x_{\tau})
\phi+a_{i}\phi'\bigr)\sqrt{2}\,dW_{\tau}^{i}
\nonumber\\[-8pt]\\[-8pt]\nonumber
&&\quad\qquad{} + \Biggl(\partial_{\tau}\phi+ \bigl(e^{-2p_{\tau}}K_{h}(x_{\tau
})-r \bigr)\phi'+e^{-2p_{\tau}}\Delta_{h}
\phi\nonumber
\\
&&\hspace*{51pt}{} +e^{-2p_{\tau}}\sum_{i=1}^{2}a_{i}^{2}
\phi''+2e^{-2p_{\tau}}\sum
_{i=1}^{2}a_{i}\mathfrak{e}_{i}(x_{\tau})
\phi' \Biggr)\,d\tau.\nonumber
\end{eqnarray}


%
\begin{remark}
We want to discuss why we insist that our control $(a_1,a_2)$ is in~$L^{\infty}$. We begin by describing a simpler situation which
illustrates the essential point. Suppose we consider a real-value
controlled process given by
\[
dx_t = a_t \,dW_t,\qquad x_0=1,
\]
where $a_t$ is an adapted real-valued function which serves as the
control. If we consider the goal to be to make the process $x_t$ hit
$0$ in within time 1 (and we stop the process when it hits $0$), then
we would like to assert that this is impossible, because, for instance,
it would violate the martingale property of $x_t$. However, without
some additional restriction on $a_t$, this will not be the case. For
example, consider the following scheme for controlling the process. For
$t\in[0,1/2)$, we let $a$ be the constant such that the process has
probability $1/2$ of hitting $0$ by time $t=1/2$. It is clear that this
is possible, since letting $a$ be constant means that $x_t$ is simply a
time-changed Brownian motion, and we know that Brownian motion almost
surely hits the origin in finite time, no matter where it is started
from. Then at $t=1/2$, the process has hit $0$ and been stopped with
probability $1/2$. If it has not, then $x_{1/2}$ is some positive
value. Again, we can find some constant value for $a$, depending only
on $x_{1/2}$, such that if we let $a_t$ equal that constant for $t\in
[1/2,3/4)$, then the process hits $0$ in that interval of time with
probability $1/2$. Thus, by time $t=3/4$, the process has hit $0$ with
probability $3/4$. Now we can iterate this procedure, at each step
using up half of the remaining time, in order to get $x_t$ to hit $0$
with probability $1$ by time $t=1$. If we do this, the resulting
process $x_t$ will no longer be a martingale on the interval $t\in
[0,1]$ but instead merely a local martingale. Part of the point is that
this is a simple trick. We can think of $a_t$ as determining a
time-change so that $x_t$ is a time-changed Brownian motion, and since
we know Brownian motion hits the origin in finite time, if we are
allowed to speed up time as much as we would like we can simply
compress the entire lifetime of the Brownian motion prior to the first
time it hits the origin into a finite interval.

We now return to the target problem we associate to Ricci flow. In
light of the above, if we assumed only that $(a_1,a_2)$ was adapted, we
could imagine a similar procedure of choosing the control to be very
large so that, from any starting point, we could cause it to hit
$\bar{p}_{t-\tau}$ (this is a moving target, but it varies in a smooth
fashion and stays bounded) by time $t$. Once it hits $\bar{p}_{t-\tau
}$, we could then ``switch'' to the successful control described in the
next section in order to hit $\bar{p}_0$ as time $t$. The result would
be that every point would be in $V(t)$, which is obviously not what we
want. Of course, what we have just described uses a discontinuous
control, but one can imagine smoothing it to get a continuous analogue.
At any rate, the underlying logic of this ``bad'' control justifies our
wish to avoid unbounded controls.

Requiring that $(a_1,a_2)$ be bounded prevents this kind of easy trick
and forces a successful control to respect the geometry of the
situation. Of course, one might imagine that there might be other, less
restrictive, ways to achieve this, such as requiring the controls to be
in some $L^p$-space for finite $p$ or requiring some natural coordinate
to be a martingale, as opposed to merely a local martingale. Indeed, if
one were to extend this stochastic target formulation to include, say,
noncompact surfaces, it seems like some weaker assumption on the
control would be appropriate. However, for the present paper, we have
no need to speculate on what other conditions one might want in other
circumstances.
\end{remark}

%
\begin{remark}
We close this section by noting that, in the case when $(M,h)$ is flat
(and thus either a torus or a Klein bottle), the orthonormal frame
bundle is unnecessary. In particular, uniformization implies that
$(M,h)$ is isometric to $\Re^2$ modulo the action of the group of Deck
transformations $\Lambda$. If we let $x_1$ and $x_2$ be the usual
Euclidean coordinates on $\Re^2$, then $h = dx_1^2 +dx_2^2$ (after
identifying $M$ with $\Re^2/\Lambda$). Further, $(W^1_{\tau},
W^2_{\tau})$ is Brownian motion on $(M,h)$, once we take it modulo
$\Lambda$. In this case, the set of controls are adapted,
time-continuous, bounded maps into $\{(a_1,a_2)\dvtx a_i \in\Re\}$, and
the controlled process simplifies, so that it is given, for both Ricci
and normalized Ricci flow, by the SDE
\[
\lleft[\matrix{ dx_{1,\tau}
\vspace*{3pt}\cr
dx_{2,\tau}
\vspace*{3pt}\cr
dp_{\tau}
} \rright] = \lleft[\matrix{ e^{-p_{\tau}} & 0
\vspace*{3pt}\cr
0 & e^{-p_{\tau}}
\vspace*{3pt}\cr
e^{-p_{\tau}}a_1 & e^{-p_{\tau}}a_2 } \rright] \lleft[\matrix{ \sqrt{2}
\,dW_{\tau}^1
\vspace*{3pt}\cr
\sqrt{2}
\,dW_{\tau}^2 } \rright].
\]
\end{remark}

%
\begin{convention}
Throughout this paper, very often we will have a fixed time $t>0$ so
that the stochastic target problem is defined on $[0,t]$ or the
(normalized) Ricci flow is defined up to time $t$. Since the process
time is always going to be in $[0,t]$, all the stopping times involved
will always be minimized with $t$ so that the stopped process is well defined.

Also, the constants involved in the main estimates may change from line
to line in such a way that they do not depend on time $t$.
\end{convention}

\section{Verification and the connection with Ricci flow}\label{s2}

At this point, we have described a pair of closely related stochastic
target problems, namely the determination of $V(t)$ and $V^n(t)$, which
we associate with Ricci flow and normalized Ricci flow, respectively.
However, we have given no justification for these associations. In the
present section, we prove that, under the assumption that a solution to
the Ricci flow exists, the solution is given by the reachable sets.
This justifies the \hyperref[sec1]{Introduction} of these particular stochastic target
problems in the context of Ricci flow.

Continuing with the notation of the previous section, we suppose that
there is a smooth solution $\bar{p}_t$ to the Ricci flow, that
is, to
equation (\ref{EqnRFp}), with initial condition $\bar{p}_0$ on the
interval $t\in[0,T)$ (where we allow the possibility that $T=\infty
$). At each time $t$, we can associate the solution with a section of
$E$ over $M$ and thus with a sub-manifold of the total space $E$, which
is smooth and intersects each fiber once, transversely. We call the
resulting sub-manifolds $\Gamma(t)$ and note that this extends our
earlier definition of $\Gamma(0)$. Of course, knowing the $\Gamma(t)$
for $t\in[0,T)$ is equivalent to knowing $\bar{p}_t$. Similarly,
suppose there is a smooth solution $\bar{p}^n_t$ to the normalized
Ricci flow, that is, to equation (\ref{EqnNRFp}), with initial
condition $\bar{p}^n_0=\bar{p}_0$ on the interval $t\in[0,T^n)$
[where, for the same manifold $(M,h)$ with the same initial metric
$g_0$, it is not necessarily true that $T$ and $T^n$ are equal]. Then
we have the associated sub-manifolds $\Gamma^n(t)$ of $E$. The
connection between the Ricci flow and normalized Ricci flow (viewed in
this way) and the stochastic target problems introduced above is given
by the following theorem. Note that both this sort of result and the
method of proof mirror that of \cite{SonerTouzi}. The main additional
complication, besides the geometric formalism needed for the general
statement of the target problem, is that the controls are not
restricted to a compact set.

%
\begin{teo}\label{teoVeri}
Let $(M,h)$ be a smooth, compact Riemannian surface with initial metric
$g_0 = e^{2\bar{p}_{0}} h$, as above. Suppose that the Ricci flow has
a smooth solution $\bar{p}_t$ on $t\in[0,T)$. Then $\Gamma(t)=V(t)$
for all $t\in[0,T)$. Similarly, if the normalized Ricci flow has a
smooth solution $\bar{p}^n_t$ on $t\in[0,T^n)$, then $\Gamma
^n(t)=V^n(t)$ for all $t\in[0,T^n)$.
\end{teo}

\begin{pf}
We start with the Ricci flow. We fix some $t\in(0,T)$ and let $\tau$
be the time parameter for the controlled process $Y_{\tau}(A)$, $\tau
\in[0,t]$ (as usual in probabilistic approaches to PDEs, process time
runs ``backward'' compared to PDE time). We consider the square of the
vertical distance between the controlled process $Y_{\tau}$ and
$\Gamma(t-\tau)$. That is, we consider $\eta(x,p,\tau)= (p -
\bar{p}_{t-\tau}(x) )^2$ along the paths of~$Y_{\tau}$, so
that $\eta_{\tau}= (p_{\tau} - \bar{p}_{t-\tau}(x_{\tau})
)^2$.

Actually, we begin by considering a slightly more general quantity. Let
$\xi(x,p,\tau)= p - \bar{p}_{t-\tau}(x)$, and for the moment let
$\phi\dvtx\Re\rightarrow[0,\infty)$ be any smooth function. We wish to consider
$\phi(\xi(x,p,\tau))$; clearly $\eta$ is just the special case
$\phi(z)=z^2$.

We now apply It\^o's formula (\ref{e2}) to $(\phi(\xi))_{\tau}$.
In the following, $\bar{p}$ is always evaluated at time $t-\tau$ and
position $x_{\tau}$, we write $\mathfrak{e}_i$ for $\mathfrak
{e}(x_{\tau})(e_i)$ and we suppress other arguments (such as for the
controls $a_i$) as desired to make things more readable. Then we have
%
\begin{eqnarray}
\label{ep1} d\bigl(\phi(\xi)\bigr)_{\tau} &=& \sqrt{2}
\phi^{\prime} e^{-p_{\tau}} \bigl[ (a_1-\mathfrak{e}_1
\bar{p} ) \,dW_{\tau}^1+ (a_2 -
\mathfrak{e}_2\bar{p} ) \,dW_{\tau}^2 \bigr]\nonumber
\\
&&{} +\sum_{i=1}^2 e^{-2p_{\tau}} \bigl[
\phi^{\prime\prime} (-\mathfrak{e}_i\bar{p} )^2 +
\phi^{\prime} \bigl(-\mathfrak{e}_i^2\bar{p}
\bigr) \bigr] \,d\tau+ \phi^{\prime} \partial_t \bar{p} \,d
\tau
\nonumber\\[-8pt]\\[-8pt]\nonumber
&&{} + e^{-2p_{\tau}} \bigl[\phi^{\prime} K_h +
\phi^{\prime\prime
} \bigl(a_1^2+a_2^2
\bigr) \bigr] \,d\tau
\\
&&{} +2e^{-2p_{\tau}}\phi^{\prime\prime} [-a_1
\mathfrak{e}_1\bar{p}-a_2 \mathfrak{e}_2
\bar{p} ] \,d\tau.\nonumber
\end{eqnarray}
Recall that $\mathfrak{e}_1^2 + \mathfrak{e}_2^2$ is just $\Lap_h$.
Then a little algebra and the fact that $\bar{p}$ satisfies equation
(\ref{EqnRFp}) allows us to simplify this, yielding
%
\begin{eqnarray}
\label{EqnGeneralPhi}
d\bigl(\phi(\xi)\bigr)_{\tau}
&=& \sqrt{2}
\phi^{\prime} e^{-p_{\tau}} \bigl[ (a_1-\mathfrak{e}_1
\bar{p} ) \,dW_{\tau}^1+ (a_2 -
\mathfrak{e}_2\bar{p} ) \,dW_{\tau}^2 \bigr]\nonumber
\\
&&{} + \bigl\{ e^{-2p_{\tau}} \phi^{\prime\prime} \bigl[ (a_1-
\mathfrak{e}_1\bar{p} )^2 + (a_2-
\mathfrak{e}_2\bar{p} )^2 \bigr]
\\
&&\hspace*{18pt}\hspace*{15pt}{} +
\phi^{\prime} \bigl(e^{-2\bar{p}} -e^{-2p_{\tau}} \bigr) (
\Lap_h\bar{p} -K_h ) \bigr\}\,d\tau.\nonumber
\end{eqnarray}
We now return to considering $\eta$. In this case, this equation
specializes to
%
\begin{eqnarray}
\label{EqnEtaSM}
d\eta_{\tau} &=& 2\sqrt{2} (p_{\tau}-\bar{p}
)e^{-p_{\tau
}} \bigl[ (a_1-\mathfrak{e}_1\bar{p}
) \,dW_{\tau}^1 + (a_2-\mathfrak{e}_2
\bar{p} ) \,dW_{\tau}^2 \bigr]\nonumber
\\
&&{} + 2 e^{-2p_{\tau}} \bigl[ (a_1-\mathfrak{e}_1
\bar{p} )^2 + (a_2-\mathfrak{e}_2
\bar{p} )^2 \bigr] \,d\tau
\\
&&\hspace*{15pt}{} +2 (p_{\tau}-\bar{p} )
\bigl(e^{-2\bar{p}}-e^{-2p_{\tau
}} \bigr) (\Lap_h
\bar{p}-K_h ) \,d\tau.\nonumber
\end{eqnarray}

First, we show that any point $(x,\bar{p}_t(x))$ in $\Gamma(t)$
is in
$V(t)$. Obviously, this is true for $t=0$. Now choose $t>0$. We choose
our controls $a_1$ and $a_2$ as follows: for $\tau\in[0,t]$, we let
$a_1$ be $\mathfrak{e}_1 \bar{p}_{t-\tau}(x_{\tau})$ and $a_2$ be
$\mathfrak{e}_2 \bar{p}_{t-\tau}(x_{\tau})$. Thus, our
controls are
Markov with respect to the process' position and the time (and the
``current'' frame, although this is largely just a convention, as
discussed above). Intuitively, all we are doing is trying to cause the
process to be tangent to the evolving solution given by $\bar{p}$. Our
controls are not only Markov in space and time, but they are given by
evaluating smooth functions of space and time (and the lift of
``space'' into the orthonormal frame bundle) along the controlled
process, and thus we know that the system of SDEs for $Y_{\tau}$ has a
unique strong solution. In particular, $Y_{\tau}$ is uniquely
determined by these controls. Using these controls, equation~(\ref
{EqnEtaSM}) simplifies to
\[
d\eta_{\tau} =2 (p_{\tau}-\bar{p} ) \bigl(e^{-2\bar{p}}-e^{-2p_{\tau}}
\bigr) (\Lap_h\bar{p}-K_h ) \,d\tau.
\]

Because $\bar{p}$ is smooth on $M\times[0,T)$ and $M$ is
compact, we
know that both $\bar{p}_{t-\tau}(x)$ and $\Lap_h\bar{p}_{t-\tau
}(x)-K_h$ are bounded on $(x,\tau)\in M\times[0,t]$. Now choose any
$\delta>0$ and let $\theta_{\delta}= \inf\{ \tau\dvtx\eta_{\tau}\geq
\delta\}$ be the first hitting time of $\delta$. Also observe that
both the controlled process $Y_{\tau}=(x_\tau,p_{\tau})$ and $\eta
_{\tau}$ have continuous paths. If we stop our process at $\theta
_{\delta}$, then $p_{\tau}$ is also bounded (this follows from the
fact that $\bar{p}$ is bounded and the definition of $\eta$).
Combining the boundedness of both $\bar{p}$ and $p_{\tau}$ with an
easy estimate for the exponential function, we see that $e^{-2\bar
{p}}-e^{-2p_{\tau}}$ is bounded above and below by a constant multiple
of $\pm(p_{\tau}-\bar{p} )$, respectively. It follows
that (for $\tau\leq\theta_{\delta}$), we have $d\eta_{\tau} \leq
C \eta_{\tau} \,d\tau$, for some positive constant $C$ depending on
$t$, $\delta$, and the bounds mentioned above. Recalling that $\eta
_0=0$, because we start our controlled process on $\Gamma(t)$, and
integrating gives
\[
\eta_{\tau\wedge\theta_{\delta}} \leq C \int_0^{\tau\wedge
\theta_{\delta}}
\eta_s \,ds \qquad\mbox{for }\tau\in[0,t].
\]
Then Gronwall's lemma implies that $\eta_{\tau\wedge\theta_{\delta
}}=0$ for all $\tau\in[0,t]$. Because $\eta_{\tau}$ has continuous
paths, this means that $\theta_{\delta}>t$, and thus we have that
$\eta_{\tau}=0$ for all $\tau\in[0,t]$. In particular, $\eta_t=0$,
and so $Y_t\in\Gamma(0)$. Thus we have shown that $\Gamma(t)\subset V(t)$.

Next, we need to show the opposite inclusion, $V(t)\subset\Gamma(t)$.
Again, this is clear for $t=0$, so we fix some $t\in(0,T)$. We have
some starting point $(\alpha,\beta)\in M\times\Re$, and we assume
that there exists a control $(a_1,a_2)$ such that $Y_{\tau}(a_1,a_2)$
almost surely hits $\Gamma(0)$ at time $\tau=t$.

At this point, we produce a mollified version of $\eta$ by a judicious
choice of $\phi$. In particular, we now let $\phi\dvtx\Re\rightarrow
[0,\infty
)$ be a smooth, symmetric function satisfying the following additional
properties: $\phi$ is nondecreasing on $[0,\infty)$, $\phi(z)=z^2$
in some neighborhood of 0, and $\phi$ is constant on $[A,\infty)$ for
an appropriately chosen constant $A$. It follows that the value of
$\phi$ on $[A,\infty)$ is positive, $\phi$ is 0 only at 0, and all
derivatives of $\phi$ are bounded. If we now let $\hat{\eta
}(x,p,\tau)=\phi(\xi(x,p,\tau))$, then $\hat{\eta}$ is a
mollified version of $\eta$, in the sense that they agree for small
values of $\eta$ but $\hat{\eta}$ is bounded, along with all of its
derivatives.

Let $D(\tau)=\E[\hat{\eta}_{\tau} ]$. Then
equation~(\ref{EqnGeneralPhi}) shows that
\begin{eqnarray}\label{eeqD}
D(\tau) &=& D(0) + \int_0^{\tau} \E
\bigl[ e^{-2p_{s}} \phi^{\prime
\prime} \bigl[ (a_1-
\mathfrak{e}_1\bar{p} )^2 + (a_2-
\mathfrak{e}_2\bar{p} )^2 \bigr]
\nonumber\\[-9pt]\\[-9pt]\nonumber
&&\hspace*{79pt}{}  +
\phi^{\prime} \bigl(e^{-2\bar{p}} -e^{-2p_{s}} \bigr) (\Lap
_h\bar{p} -K_h ) \bigr] \,ds.\nonumber
\end{eqnarray}
Here, of course, the derivatives of $\phi$ are evaluated at $\xi
(x_{s},p_{s},s)$. Note that $\mathfrak{e}_1\bar{p}$, $\mathfrak
{e}_2\bar{p}$ and $\Lap_h\bar{p} -K_h$ are all bounded.
Also, for
small $\xi$ we have that $\phi^{\prime\prime}=2$ and $\phi^{\prime
}=2 (p_{\tau}-\bar{p} )$, and both\vspace*{1pt} of these
derivatives are
bounded for all $\xi$. Moreover, both $e^{-2p_{\tau}} \phi^{\prime
\prime}$ and $\phi^{\prime} (e^{-2\bar{p}} -e^{-2p_{\tau}}
)$ are bounded because the derivatives of $\phi$ are
identically zero for $\xi>A$. In addition, for any two constants
$C_{1},C_{2}\ge0$, there is another constant $C_{3}>0$ such that for
any $\xi\in\Re$,
\[
C_{1}\phi''(\xi)-C_{2}
\phi'(\xi)\xi\ge-C_{3}\phi(\xi).
\]
Notice that as a consequence of (\ref{eeqD}) and the continuity of
the inside functions, $D(\tau)$ is actually differentiable as a
function of $\tau$. In particular, combining this with the above
inequality we deduce that
\[
D'(\tau)\ge-C D(\tau)
\]
for all $\tau\in[0,t]$. This means that $D(\tau)e^{C\tau}$ is
increasing with $\tau$, so
%
\begin{equation}
\label{eeqD2} D(t)e^{Ct}\ge D(0)\ge0.
\end{equation}
By assumption, the controlled process hits $\Gamma(0)$ at time $t$
a.s., and thus $D(t)=0$. This, and the preceding inequality,
immediately lead to $D(0)=0$ which is equivalent to saying that our
initial point $(\alpha,\beta)$ is in $\Gamma(t)$. Thus, we have
proven that $V(t)\subset\Gamma(t)$.

The proof for the normalized Ricci flow is almost identical. With the
appropriate quantities, $\bar{p},p_{\tau},x_{\tau}$ and so on,
equation (\ref{ep1}) becomes
%
\begin{eqnarray}
\label{ep2} d\bigl(\phi(\xi)\bigr)_{\tau}&=& \sqrt{2}
\phi^{\prime} e^{-p_{\tau}} \bigl[ (a_1-\mathfrak{e}_1
\bar{p} ) \,dW_{\tau}^1+ (a_2 -
\mathfrak{e}_2\bar{p} ) \,dW_{\tau}^2 \bigr]\nonumber
\\
&&{} +\sum_{i=1}^2 e^{-2p_{\tau}} \bigl[
\phi^{\prime\prime} (-\mathfrak{e}_i\bar{p} )^2 +
\phi^{\prime} \bigl(-\mathfrak{e}_i^2\bar{p}
\bigr) \bigr] \,d\tau+ \phi^{\prime} \partial_t \bar{p} \,d
\tau
\nonumber\\[-8pt]\\[-8pt]\nonumber
&&{} + e^{-2p_{\tau}} \bigl[\phi^{\prime} K_h-re^{2p_{\tau}}
+ \phi^{\prime\prime} \bigl(a_1^2+a_2^2
\bigr) \bigr] \,d\tau
\\
&&{} +2e^{-2p_{\tau}}\phi^{\prime\prime} [-a_1
\mathfrak{e}_1\bar{p}-a_2 \mathfrak{e}_2
\bar{p} ] \,d\tau\nonumber
\end{eqnarray}
and then from (\ref{EqnNRFp}), we get exactly the same equation from
(\ref{EqnGeneralPhi}), thus the rest of the proof is identical.
\end{pf}

From the point of view of control theory, the above result is a
verification theorem. From the point of view of PDE theory, this can
also be thought of as a uniqueness theorem. In particular, it shows
that smooth solutions to the Ricci flow are unique and we state this in
the following.

%
\begin{Cor}
If there is a (smooth) solution to (normalized) Ricci flow on the time
interval $[0,T)$, then it is unique.
\end{Cor}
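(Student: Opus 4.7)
The plan is to observe that the corollary is essentially an immediate consequence of Theorem \ref{THM:Veri}, because the reachable sets $V(t)$ and $V^n(t)$ are defined purely in terms of the stochastic target problem, the underlying manifold $(M,h)$, and the initial data $\Gamma(0)$; they make no reference to any particular solution of the flow. Thus any smooth solution must give rise to the same family of hypersurfaces, which forces uniqueness.

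More concretely, suppose $\bar{p}_t$ and $\tilde{p}_t$ are two smooth solutions of the Ricci flow on $[0,T)$ with the same initial condition $\bar{p}_0$. They determine submanifolds $\Gamma(t)$ and $\tilde{\Gamma}(t)$ of $E$ in the manner described before the statement of Theorem \ref{THM:Veri}, and both families satisfy $\Gamma(0) = \tilde{\Gamma}(0)$ (this is the graph of $\bar{p}_0$). Applying Theorem \ref{THM:Veri} to each solution separately, we obtain $\Gamma(t) = V(t)$ and $\tilde{\Gamma}(t) = V(t)$ for all $t \in [0,T)$, where $V(t)$ is the reachable set associated with the common initial hypersurface $\Gamma(0)$. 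Hence $\Gamma(t) = \tilde{\Gamma}(t)$ for all such $t$, which is equivalent to $\bar{p}_t = \tilde{p}_t$ pointwise on $M$.

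The argument for the normalized Ricci flow is identical, with $V^n(t)$ and $\Gamma^n(t)$ replacing $V(t)$ and $\Gamma(t)$, and with the controlled process $Y^n_\tau$ replacing $Y_\tau$. If the two hypothesized solutions happen to have a priori different intervals of existence $[0,T_1)$ and $[0,T_2)$, we simply apply the above reasoning on the common interval $[0,\min(T_1,T_2))$ to conclude agreement there; uniqueness on every such subinterval is precisely what the statement asserts.

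The only conceivable obstacle is a subtle one: one should check that the proof of Theorem \ref{THM:Veri} really uses only the solution whose uniqueness one wants to conclude, together with data intrinsic to the target problem. Inspecting that proof, the set $V(t)$ is defined before any solution of the flow is invoked, and the inclusions $\Gamma(t) \subset V(t)$ and $V(t) \subset \Gamma(t)$ are each derived solely from the fact that the given $\bar{p}_t$ satisfies the Ricci flow PDE with the prescribed initial condition. So there is no circularity, and the corollary follows with no further work.
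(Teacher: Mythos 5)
Your proof is correct and is exactly the argument the paper has in mind: the paper gives no separate proof of this corollary, treating it as an immediate consequence of Theorem~\ref{THM:Veri}, and your observation that $V(t)$ is defined purely from the target problem and $\Gamma(0)$ (so two solutions both equal $V(t)$) is the key point making that inference rigorous.
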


It bears repeating that the above relies on already knowing that the
Ricci flow has a smooth solution on some interval; in other words, it
sheds no light on the existence of a solution (to either the Ricci flow
or the control problem). On the other hand, this existence is well
known in the present case. Cao \cite{Cao} and Hamilton \cite{Ham1}
show that, for a smooth, compact initial surface, the Ricci flow always
has a smooth solution on some (nontrivial) interval of time, and the
normalized Ricci flow has a smooth solution for all time. (Of course,
much more can be said, including the relationship between the
normalized and un-normalized flows, but again, this is well known and
can be found in any book on the subject.) For an accessible overview we
refer to \cite{ChKn1}, Chapter~5, which treats the (normalized) Ricci
flow on surfaces.

One additional feature of the successfully controlled process is that
it provides Brownian motion on $M$ under the backward Ricci flow (or
backward normalized Ricci flow, of course), as we now explain. If we
put a smooth family of metrics $g_{\tau}$ on a smooth manifold $M$,
then a process $B_{\tau}$ is a Brownian motion on $(M,g_{\tau})$ if
it solves the martingale problem for the time-inhomogeneous operator
$\Lap_{g_{\tau}}$. Suppose we have a smooth solution to the Ricci
flow, as above, for $t\in[0,T)$, and let $g_t$ be the metric on $M$
corresponding to this solution. Then if we choose a time $t$ [in
$(0,T)$] and point $x_0\in M$, there is a unique point $(x_0,\bar
{p}_{0})$ over $x_0$ (where, of course, we use our standard fiber
coordinate $p$) in $\Gamma(t)=V(t)$. If we now run our successfully
controlled process $Y_{\tau}=(x_{\tau},p_\tau)$ starting from this
point, we know that it is on $\Gamma(t-\tau)$ for all $\tau\in
[0,t)$, or equivalently that $p_{\tau}=\bar{p}_{t-\tau}(x_{\tau})$,
for all $\tau\in[0,t]$ almost surely. Then looking at $x_{\tau}$
(which is just the $M$-marginal) and recalling that $g_t= e^{2
\bar{p}_{t}}h$, a little thought shows that $x_{\tau}$ is a Brownian
motion on $(M,g_{t-\tau})$ for $\tau\in[0,t]$. That ``process time''
runs backward compared to ``PDE'' time, which manifests itself in the
$t-\tau$ parameter (with $t$ fixed and $\tau$ increasing) for the
metric $g$, explains why we get Brownian motion on $M$ under backward
Ricci flow, as opposed to just Ricci flow.

For clarity, let us temporarily denote $x_{\tau}$ under the successful
control as $\hat{x}_{\tau}$. Then recognizing it as Brownian motion
under backward Ricci flow gives a way of representing the solution to
the Ricci flow (or normalized Ricci flow) that looks more like the
usual representations for parabolic (linear) PDEs. In the special case
when $h$ is flat, normalized and un-normalized Ricci flow are the same,
and we see that $p_{\tau}$ is a martingale. Further, we have that
%
\begin{equation}
\label{e5} \bar{p}_t(x_0) = \E^{x_0,t}
\bigl[\bar{p}_0 (\hat{x}_{t} ) \bigr],
\end{equation}
where the expectation is taken with respect to the successfully
controlled process started from $(x_0,\bar{p}_t(x_0))$ and run until
$\tau=t$. This is analogous to solving the heat equation with some
initial condition by running Brownian motion and then using it to
average the initial condition. The difference is that, for the heat
equation, we can construct Brownian motion (or more analytically, the
heat kernel) without already having a solution to the heat equation
with our initial data. This is because Brownian motion (or the heat
kernel) does not depend on the initial data, and so we can use it to
solve the heat equation in the first place. All of this is a
manifestation of the linearity of the heat equation. In the case of
Ricci flow, we need to know $\hat{p}_{\tau}$ in order to determine
$\hat{x}_{\tau}$ (or more accurately, these two are intertwined by
the system of SDEs they solve), so we cannot first determine $\hat
{x}_{\tau}$ and then use it in the above to solve the Ricci flow.

Also, we can now say a bit more about the recent work of \cite{KACP}
and \cite{ACT}. They give a lift of Brownian motion on a manifold with
time-dependent metric to the frame bundle which gives the parallel
transport along the Brownian paths. They then introduce a notion of
damped parallel transport which, under the Ricci flow (but not the
normalized flow), becomes an isometry as well. This damped parallel
transport can be used to produce martingales from solutions to heat
problems under the Ricci flow. In our notation, $x_{\tau}$ is the
Brownian motion with respect to a time-dependent metric (with an
additional factor of $\sqrt{2}$ to get the normalization right, of
course), and $\{ e^{-p_{\tau}}\mathfrak{e}(x_{\tau})(e_1),
e^{-p_{\tau}}\mathfrak{e}(x_{\tau})(e_2) \}$ (which is an
orthonormal frame for the time-varying metric) gives the parallel
transport along the Brownian path $x_{\tau}$.

\section{The blow ups of the Ricci flow for the case of positive or negative Euler characteristic}\label{s3}

This section is dedicated to showing that in the case of the
(unnormalized) Ricci flow, there are blow ups either in finite or
infinite time if the curvature of the reference metric $K_{h}$, is
either always positive or always negative.

Assume now that the Ricci flow has a smooth solution defined on the
time interval $[0,T)$. Then, from Theorem \ref{teoVeri}, we learn
that for any fixed time $t\in[0,T)$, $p_{\tau}=\bar{p}_{t-\tau
}(x_{\tau})$ where $(x_{\tau},p_{\tau})$ is the solution to (\ref
{e0}) with the initial conditions $(x,\bar{p}_{0}(x))$. On the other
hand, taking a smooth function $\phi\dvtx[0,t]\times\Re\to\Re$ in
(\ref{e2}), we obtain that
\begin{eqnarray*}
d\phi(\tau,p_{\tau})&=&e^{-p_{\tau}}\phi'(p_{\tau})
\sum_{i=1}^{2}a_{i}
\sqrt{2}\,dW_{\tau}^{i}
\\
&&{} + \Biggl[ \partial_{\tau}\phi(
\tau,p_{\tau})+e^{-2p_{\tau}}\Biggl(\phi'(
\tau,p_{\tau})K_{h}(x_{\tau
})+\phi''(
\tau,p_{\tau})\sum_{i=1}^{2}a_{i}^{2}
\Biggr) \Biggr]\,d\tau.
\end{eqnarray*}
Since the successful control is given by $a_{i}=\mathfrak
{e}_{i}\bar{p}_{t-\tau}$, we get
\[
\sum_{i=1}^{2}a_{i}^{2}=
\bigl\llvert\nabla\bar{p}_{t-\tau}(x_{\tau
})\bigr\rrvert
^{2},
\]
and this means that
\begin{eqnarray*}
&& \phi(\tau,p_{\tau})-\int_{0}^{\tau} \bigl[
\partial_{\tau}\phi(\sigma,p_{\sigma})
\\
&&\hspace*{67pt}{} +e^{-2p_{\sigma}} \bigl(
\phi'(\sigma,p_{\sigma
})K_{h}(x_{\sigma})+
\phi''(\sigma,p_{\sigma})\bigl\llvert\nabla
\bar{p}_{t-\sigma}(x_{\sigma})\bigr\rrvert^{2} \bigr)
\bigr]\,d\sigma
\end{eqnarray*}
is a martingale. In particular, taking expectation at times $\tau=0$
and $\tau=t$ and using $p_{\tau}=\bar{p}_{t-\tau}(x_{\tau})$, yields
%
\begin{eqnarray}
\label{e3} \phi\bigl(0,\bar{p}_{t}(x)\bigr)&=&\E^{(x,t)}
\bigl[\phi\bigl(t,\bar{p}_{0}(x_{t})\bigr)\bigr]\nonumber
\\
&&{}-\int_{0}^{t}\E^{(x,t)} \bigl[
\partial_{t}\phi(\sigma,p_{\sigma})
\nonumber\\[-8pt]\\[-8pt]\nonumber
&&\hspace*{54pt}{}+e^{-2p_{\sigma}} \bigl(
\phi'(\sigma,p_{\sigma})K_{h}(x_{\sigma
})
\\
&&\hspace*{96pt}{}
+ \phi''(\sigma,p_{\sigma})\bigl\llvert\nabla
\bar{p}_{t-\sigma
}(x_{\sigma
})\bigr\rrvert^{2} \bigr)
\bigr]\,d\sigma.\nonumber
\end{eqnarray}
There are two obvious obstructions stemming from this formula. The
first one is that if $K_{h}(x)>0$ for all $x\in M$, then taking $\phi
(\tau,p)=e^{2p}$, the above formula (\ref{e3}) implies
\begin{eqnarray*}
e^{2\bar{p}_{t}(x)} &=& \E^{(x,t)}\bigl[e^{2\bar{p}_{0}(x_{t})}\bigr
]-2\int
_{0}^{t}\E^{(x,t)} \bigl[
K_{h}(x_{\sigma})+ 2\bigl\llvert\nabla\bar{p}_{t-\sigma}(x_{\sigma
})\bigr\rrvert^{2} \bigr]\,d\sigma
\\
&\le& \E^{(x,t)}\bigl[e^{2\bar{p}_{0}(x_{t})}\bigr]-2\int_{0}^{t}
\E^{(x,t)}\bigl[ K_{h}(x_{\sigma})\bigr]\,d\sigma
\end{eqnarray*}
and thus, upon denoting the uniform norm by $\llvert\cdot\rrvert_{u}$
and taking
$K_{0}=\inf_{x\in M} K_{h}(x)$,
\[
e^{2\bar{p}_{t}(x)}\le e^{2\llvert\bar{p}_{0}\rrvert
_{u}}-2t K_{0}.
\]
As this is true for any $t\in[0,T)$, the extinction time of the Ricci
flow is finite and is certainly at most $e^{2\llvert\bar{p}_0\rrvert
_{u}}/(2K_{0})$. Therefore, in the case of positive
curvature the
flow develops singularities in finite time.

On the other hand, if the curvature is negative ($K_{h}<0$ on $M$),
then there are some constants $C_{1},C_{2}>0$ such that
\[
\bar{p}_{t}(x)\ge\log(C_{1}t+1)-C_{2}\qquad
\mbox{for all }x\in M\mbox{ and }t\ge0.
\]
To see this, take $K_{0}=\inf_{x\in M} -K_{h}(x)>0$, thus $K_{h}(x)\le
-K_{0}<0$ and then consider $\phi(\tau,p)=p$ in (\ref{e3}) to
deduce that
\[
\bar{p}_{t}(x)=\E^{(x,t)}\bigl[\bar{p}_0(x_{t})
\bigr]-\int_{0}^{t}\E^{(x,t)}
\bigl[e^{-2p_{\sigma}}K_{h}(x_{\sigma})\bigr]\,d\sigma\ge\inf
_{x\in
M}\bar{p}_0
\]
which means that $\bar{p}_{t}(x)$ is bounded below uniformly in $t\ge
0$ and $x\in M$. Now consider the test function $\phi(\tau,p)=\exp
(\alpha(t-\tau-\frac{1}{2K_{0}}e^{2p}) )$. Since $\bar{p}_{t}(x)$ is
bounded below, this implies that for large enough
$\alpha$, $\phi''(\sigma, p_{\sigma})\ge0$. On the other hand,
$\partial_{\tau}\phi(\sigma,p)-K_{0}e^{-2p}\phi'(\sigma, p)=0$,
and this combined with the preceding and the fact that $\phi'$ is
negative leads to
\[
\phi\bigl(0,\bar{p}_{t}(x)\bigr)\le\E^{t,x}\bigl[\phi
\bigl(t,\bar{p}_0(x_{t})\bigr)\bigr]\le1,
\]
which means that $\bar{p}_{t}(x)\ge\frac{1}{2}\log(2K_{0}t)$ for
any $t>0$ for which $\bar{p}_{t}$ exists. In particular, this shows
that either the flow ceases to exist after a finite time, or, if it
does exist for all times, $\bar{p}_{t}(x)$ goes to infinity uniformly
over $x\in M$. The moral is that we cannot expect the Ricci flow to
converge as the time approaches either the extinction time or infinity.

For the flat case, since the curvature is $0$, the normalized and the
unnormalized Ricci flows are the same, and thus we will treat this case
as the normalized Ricci flow.

%
\begin{remark} The blow up in the negative case does not take place in
finite time but this requires more arguments which we do not provide here.
\end{remark}

\section{Time-dependent a priori bounds for Ricci flow}\label{s4}

We now turn our attention to using the stochastic target representation
for the normalized Ricci flow to derive (more accurately, of course, to
re-derive) geometric facts about the flow. We will always work with the
case where the reference metric $h$ has constant curvature. By
uniformization, this is no loss of generality, and it simplifies the
analysis considerably. After a preliminary rescaling, we can assume
that this constant curvature is either $1$, $0$, or $-1$. Further, we
can rescale the initial metric $g_0$ so that it has the same area as
$h$. Thus, without loss of generality, we are in one of three cases (by
the Gauss--Bonnet theorem). First, if the Euler characteristic of $M$
is positive, we have that $K_h \equiv r\equiv1$. If the Euler
characteristic of $M$ is zero, we have that $K_h\equiv r\equiv0$.
Finally, when the Euler characteristic of $M$ is negative we have that
$K_h\equiv r\equiv-1$. The bounds we have in mind are similar in all
three cases, although the differences in sign of $K_h$ result in
important differences.

We call these bounds ``a priori'' because they do not depend on the
structure of the reachable set. We elaborate on this after Theorem \ref
{teoAprioriBounds}.

We have one more comment about notations before we begin. Because we
will be concerned with the normalized Ricci flow for the rest of the
paper, we drop the ``n'' superscripts. Thus, for instance, we let $\bar
{p}_t$ denote a solution to the normalized Ricci flow, unless otherwise
indicated.

The interesting feature of choosing $h$ to be a metric of constant
curvature is that the drift of the SDE satisfied by $p_{\tau}$ does
not depend on $x_{\tau}$ (although the target always does, except in
trivial cases). In particular, we have the following three cases:
%
\begin{eqnarray}
\label{Eqn3cases} r&=&1\dvt \qquad dp_{\tau} = e^{-p_{\tau}} \Biggl
[\sum
_{i=1}^2 a_i \sqrt{2}
\,dW_{\tau}^i \Biggr] + \bigl(e^{-2p_{\tau}}-1 \bigr) \,d\tau,\nonumber
\\
r&=&0\dvt \qquad dp_{\tau} = e^{-p_{\tau}} \Biggl[\sum
_{i=1}^2 a_i \sqrt{2}
\,dW_{\tau}^i \Biggr],
\\
r&=&-1\dvt \qquad dp_{\tau} = e^{-p_{\tau}} \Biggl[\sum
_{i=1}^2 a_i \sqrt{2}
\,dW_{\tau}^i \Biggr] + \bigl(1-e^{-2p_{\tau}} \bigr) \,d\tau.\nonumber
\end{eqnarray}

In general, the stochastic target problem for the normalized Ricci flow
(and also the Ricci flow itself) gives an equation of the form
%
\begin{equation}
\label{eap3} dp_{\tau}=e^{-p_{\tau}} \Biggl[\sum
_{i=1}^{2}a_{i}\,dW^{i}_{\tau
}
\Biggr]+U_{\tau}(p_{\tau}) \,d\tau,
\end{equation}
where the controls $a_{i}$, $i=1,2$ are bounded and chosen such that
$p_{t}$ is almost surely on $M_{0}$, the section corresponding to $\bar
{p}_{0}$ in the bundle $M\times\Re$. In the case at hand, we assume
that $U_{\tau}(p)$ is a function $U\dvtx[0,t]\times\Re\to\Re$ which is
uniformly locally Lipschitz in the second variable, that is, for any
$L>0$ there is a constant $C_{L}$ with $\llvert U_{\tau
}(p)-U_{\tau}(q)\rrvert\le
C_{L}\llvert p-q\rrvert$ for all $\tau\in[0,t]$ and $p,q\in[-L,L]$.

The basic point is that there are natural barriers for $p_{\tau}$
given in terms of equation~(\ref{eap3}) where the martingale part is
set to be equal to $0$. To be precise, we define a barrier as a
solution $q_{\tau}$ to the ODE
%
\begin{equation}
\label{eap4} dq_{\tau}=U_{\tau}(q_{\tau}) \,d\tau.
\end{equation}
In this framework, we have a general result as follows.

%
\begin{Lemma}
Assume that $p_{\tau}$ and $q_{\tau}$ are solutions to (\ref
{eap3}) and (\ref{eap4}), respectively, for $\tau\in[0,t]$ with
$U$ a uniformly locally Lipschitz function in the second variable on
$[0,t]\times\Re$.

If at any time $\tau_{1}\in[0,t)$, $p_{\tau_{1}}< q_{\tau_{1}}$
with positive probability, then at any later time $\tau_{2}\in(\tau
_{1},t]$, $p_{\tau_{2}}< q_{\tau_{2}}$ with positive probability.

Similarly, if at any time $\tau_{1}\in[0,t)$, $p_{\tau_{1}}> q_{\tau
_{1}}$ with positive probability, then at any later time $\tau_{2}\in
(\tau_{1},t]$, $p_{\tau_{2}}> q_{\tau_{2}}$ with positive probability.
\end{Lemma}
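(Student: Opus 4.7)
My plan is to work with the difference $D_\tau := q_\tau - p_\tau$, a continuous semimartingale on $[\tau_1, \tau_2]$ satisfying
\begin{equation*}
dD_\tau = \bigl(U_\tau(q_\tau) - U_\tau(p_\tau)\bigr)\,d\tau - \sqrt{2}\,e^{-p_\tau}\sum_{i=1}^2 a_i\,dW^i_\tau.
\end{equation*}
The goal is to show that if $A := \{D_{\tau_1} > 0\}$ has positive probability then so does $\{D_{\tau_2} > 0\}$. Write the drift as $\beta_\tau D_\tau$, where $\beta_\tau := \bigl(U_\tau(q_\tau) - U_\tau(p_\tau)\bigr)/D_\tau$ on $\{D_\tau \ne 0\}$ and $\beta_\tau := 0$ otherwise; the uniform local Lipschitz hypothesis gives $|\beta_\tau| \le C_L$ on the event that $|p_\tau|, |q_\tau| \le L$.

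The natural move is to neutralize the drift via an integrating factor. Setting $E_\tau := \exp\bigl(-\int_{\tau_1}^\tau \beta_s\,ds\bigr)$, which is positive and locally bounded, It\^o's formula gives $d(E_\tau D_\tau) = -\sqrt{2}\,E_\tau e^{-p_\tau}\sum a_i\,dW^i_\tau$, so $\tilde{N}_\tau := E_\tau D_\tau$ is a continuous local martingale. Since $p_\tau$ is a.s.~finite on $[0, t]$, for any $\epsilon > 0$ we can find $L$ so large that $P(\tau_L = \tau_2) > 1-\epsilon$, where $\tau_L := \inf\{\tau \ge \tau_1 : |p_\tau| > L\} \wedge \tau_2$. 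Localizing at $\tau_L$, the stopped process $\tilde{N}^L_\tau := \tilde{N}_{\tau \wedge \tau_L}$ is a bounded continuous $L^2$-martingale on $[\tau_1, \tau_2]$ with $\tilde{N}^L_{\tau_1} = D_{\tau_1}$.

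Now apply optional sampling: $\E[\tilde{N}^L_{\tau_2}\mathbf{1}_A] = \E[D_{\tau_1}\mathbf{1}_A] > 0$. Since $E_\tau > 0$ pathwise, the strict positivity of this expectation forces $P\bigl(A \cap \{D_{\tau_L \wedge \tau_2} > 0\}\bigr) > 0$. On the subevent $\{\tau_L = \tau_2\}$ one has $D_{\tau_L \wedge \tau_2} = D_{\tau_2}$, so we recover $D_{\tau_2} > 0$. Choosing $L$ large enough that $P(\tau_L < \tau_2)$ is strictly smaller than the quantitative lower bound on $P\bigl(A \cap \{D_{\tau_L \wedge \tau_2} > 0\}\bigr)$ extracted from the identity above (via the crude estimate $P \ge \E[D_{\tau_1}\mathbf{1}_A]/\|\tilde{N}^L_{\tau_2}\|_\infty$, with $\|\tilde{N}^L_{\tau_2}\|_\infty$ controlled by $C_L$) yields a positive-probability intersection on which $D_{\tau_2} > 0$, proving the first assertion. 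The reverse inequality follows by applying the same argument to $-D_\tau = p_\tau - q_\tau$.

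The main obstacle is the balancing in the localization step: the quantitative lower bound for $P\bigl(A \cap \{D_{\tau_L \wedge \tau_2} > 0\}\bigr)$ depends on $L$ through the essential supremum of $\tilde{N}^L_{\tau_2}$, which grows with $C_L$, while $P(\tau_L < \tau_2)$ decays as $L \to \infty$. Showing these can be balanced requires exploiting the specific growth of the Lipschitz constants against the a priori integrability of $p_\tau$ on $[0,t]$; for the explicit drifts appearing in (normalized) Ricci flow, the a.s.~finiteness of $p_\tau$ guaranteed by the hypothesis is enough to ensure that $L$ may be chosen to make the two estimates compatible.
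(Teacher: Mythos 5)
Your approach — neutralize the linearized drift with an integrating factor, localize at $\tau_L = \inf\{\tau : |p_\tau| > L\}$, and run a comparison between the two $L$-dependent estimates — is different from the paper's, and it has a genuine gap precisely where you flagged it. The quantitative lower bound you extract, $\E[D_{\tau_1}\mathbf{1}_A]/\|\tilde{N}^L_{\tau_2}\|_\infty$, has $\|\tilde{N}^L_{\tau_2}\|_\infty \lesssim (L + \sup|q|)\,e^{C_L(\tau_2-\tau_1)}$, and for the drifts actually appearing in normalized Ricci flow (e.g.\ $U_\tau(p) = r(e^{-2p}-1)$) the local Lipschitz constant $C_L$ grows like $e^{2L}$, so your lower bound decays doubly exponentially in $L$. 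Meanwhile, the only information you have on $P(\tau_L < \tau_2)$ is that it tends to $0$ as $L \to \infty$ (because $p$ is a.s.~finite on $[0,t]$) — you have no rate, and the hypotheses of the Lemma don't supply one (bounded controls do not prevent $e^{-p_\tau}$ from becoming large when $p_\tau$ is very negative, so you cannot easily bound moments of $\sup|p_\tau|$). The claim that the two quantities ``may be balanced'' is exactly what needs to be proved, and it is not obviously true; as it stands, the bound on $P(A \cap \{D_{\tau_L\wedge\tau_2} > 0\})$ cannot distinguish the case you want from the case where $P(D_{\tau_2}>0)=0$.

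The paper sidesteps this entirely by localizing in the difference variable $\xi = q-p$ rather than in $p$. It fixes $L$ once and for all so that $|q_\tau| \le L$ on $[0,t]$ and $q_{\tau_1} - p_{\tau_1} > 1/L$ with positive probability, then introduces a smooth non-decreasing $\eta$ with $\eta(\xi)=0$ for $\xi\le 0$, $\eta(\xi)=\xi^2$ for small $\xi\ge 0$, and $\eta \equiv 1$ for $\xi \ge 2L$. Both $\eta'$ and $\eta''$ are supported in $[0,2L]$, so whenever they are non-zero one has $q_s - 2L \le p_s \le q_s$, which, using the bound on $q$, pins $p_s$ inside a fixed compact interval; on that interval $e^{-2p_s}$ and the local Lipschitz constant are bounded by constants depending only on $L$. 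The stopping time $\sigma = \inf\{u \ge \tau_1 : p_u \ge q_u\}\wedge t$ then meshes with $\eta$ because $\eta$ vanishes on $(-\infty,0]$, and a Gronwall argument on $\E[\eta_{\tau\wedge\sigma}]$ gives $\E[\eta_{\tau_2\wedge\sigma}] \ge \E[\eta_{\tau_1}]e^{-C(\tau_2-\tau_1)} > 0$, which forces $P(\sigma > \tau_2) > 0$. No limiting argument, no competition between two vanishing quantities. If you want to salvage your linearization idea, replace the localization in $p$ with the localization in $q-p$ that the cutoff function provides — that is what makes the argument close.
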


\begin{pf} The proof is a basic application of stopping time and
Gronwall-type argument. We will prove only the first part, the second
one being similar.

So, assume that $q_{\tau_{1}}>p_{\tau_{1}}$ with positive probability
and, therefore, that we can choose a constant $L>0$ such that $L\ge
q_{\tau_{1}}-p_{\tau_{1}}>1/L$ with positive probability. We further
take $L$ large enough so that $\llvert q_{\tau}\rrvert\le L$
for all $\tau\in[0,t]$.

Now, for any smooth function $\eta\dvtx\Re\to\Re$, we have
%
\begin{eqnarray}\label{eap5}
\eta(q_{\tau}-p_{\tau}) &=& \eta(q_{\tau_{1}}-p_{\tau
_{1}})+M_{\tau
}\nonumber
\\
&&{} +
\int_{\tau_{1}}^{\tau} \bigl(e^{-2p_{s}}\eta
''(q_{s}-p_{s})
\bigl(a_{1}^{2}(s)+a_{2}^{2}(s)\bigr)
\\
&&\hspace*{33pt}{} +
\eta'(q_{s}-p_{s}) \bigl(U_{s}(q_{s})-U_{s}(p_{s})
\bigr) \bigr)\,ds,\nonumber
\end{eqnarray}
where $M_{\tau}$ is a martingale with $M(\tau_{1})=0$. Further, we
choose the function $\eta(\xi)$ such that it is nondecreasing, equal
to $0$ for $\xi\le0$, equal to $1$ for $\xi\ge2L$ and $\eta(\xi
)=\xi^{2}$ for small $\xi\ge0$.

Next, we define the stopping time $\sigma=\inf\{u\ge\tau_{1}\dvtx
p_{u}\ge q_{u} \}\wedge t$. With this setup, we will denote for
simplicity $\eta_{\tau}=\eta(q_{\tau}-p_{\tau})$, $\eta'_{\tau
}=\eta'(q_{\tau}-p_{\tau})$ and $\eta''_{\tau}=\eta''(q_{\tau
}-p_{\tau})$. Furthermore, from (\ref{eap5}),
%
\begin{eqnarray}\label{eeqEta}
\E[\eta_{\tau\wedge\sigma}]&=&\E[\eta_{\tau_{1}}]
\nonumber
\\
&&{} +\int
_{0}^{\tau
}\E\bigl[\mathbh{1}_{[\tau_{1},\sigma]}(s)
\bigl(e^{-2p_{s}} \eta''_{s}
\bigl(a_{1}^{2}(s)+a_{2}^{2}(s)\bigr)
\\
&&\hspace*{85pt}{} +
\eta'_{s}\bigl(U_{s}(q_{s})-U_{s}(p_{s})
\bigr) \bigr)\bigr] \,ds.\nonumber
\end{eqnarray}
Since $q_{s}$ remains bounded on $[\tau_{1},\tau_{2}]$ and $\eta'$
has compact support, combined with the property that $U_{\tau}$ is
uniformly Lipschitz in the second variable on compact intervals, we can
find a constant $C>0$, such that
\[
\eta'_{s}\bigl(U_{s}(q_{s})-U_{s}(p_{s})
\bigr)\ge-C\eta'_{s}(q_{s}-p_{s}).
\]
This, the choice of our function $\eta$, the fact that the controls
$a_{i}$, $i=1,2$ are bounded, and that $q_{s}$ is bounded, yield, in
the first place, that $e^{-2p_{s}}\eta_{s}''$ is bounded, and also
that for some constant $C>0$,
%
\begin{equation}
\label{eeqEta2} \bigl(e^{-2p_{s}}\eta''_{s}
\bigl(a_{1}^{2}(s)+a_{2}^{2}(s)\bigr)+
\eta'_{s}\bigl(U_{s}(q_{s})-U_{s}(p_{s})
\bigr) \bigr)\ge-C\eta_{s}.
\end{equation}

To check this, one can reason as follows. For $q_{s}\le p_{s}$, both
sides are $0$. For $\epsilon> q_{s}-p_{s}>0$ with small $\epsilon$,
the first term is nonnegative and the second one is bounded below by
$-C(q_{s}-p_{s})^{2}$ which is again a constant times $\eta_{s}$. For
$q_{s}-p_{s}>\epsilon$, the inequality follows easily as the left-hand
side is bounded below by some negative constant and $\eta_{s}$ is
certainly bounded below by $\epsilon^{2}$.

The next step is similar to the passage from (\ref{eeqD}) to (\ref
{eeqD2}). To wit, notice that, from (\ref{eeqEta}), $u(\tau)=\E
[\eta_{\tau\wedge\sigma}]$ is a continuous and differentiable
function of $\tau$ for $\tau\in[\tau_1,\tau_2]$. Combining this
with (\ref{eeqEta2}) leads to
\[
u'(\tau)\ge-C\E\bigl[\mathbh{1}_{[\tau_{1},\sigma]}(\tau)
\eta_{\tau}\bigr].
\]

Since $\sigma$ is the first time $p_{s}= q_{s}$, it follows that,
$\mathbh{1}_{[\tau_{1},\sigma]}(\tau)\eta_{\tau}=\eta_{\tau
\wedge\sigma}$, consequently,
\[
u'(\tau)\ge-Cu(\tau),
\]
which results with
\[
u(\tau)e^{C(\tau-\tau_1)}\ge u(\tau_1)>0
\]
or equivalently,
\[
\E[\eta_{\tau\wedge\sigma}] e^{C(\tau-\tau_{1})}\ge\E[\eta_{\tau_{1}}]>0.
\]
The hypothesis $q_{\tau_{1}}>p_{\tau_{1}}$ with positive probability
is translated into positivity of $\E[\eta_{\tau_{1}}]$.
For $\tau=\tau_{2}$ we obtain $\E[\eta_{\tau_{2}\wedge\sigma
}]=\E[\eta_{\tau_{2}},\sigma>\tau_{2}]>0$ and, therefore, we
conclude that $\{\sigma>\tau_{2} \}$ has positive probability; stated
otherwise, the probability that $q_{\tau_{2}}>p_{\tau_{2}}$ is positive.

One technical word is in place here. Namely, the definition from (\ref
{eap3}) is in the sense of local martingales, but during the proof we
look at $\eta(q_{\tau}-p_{\tau})$ and this is actually a
semi-martingale in the sense that is a sum of martingale and a bounded
variation process, not merely a sum of a local martingale and a locally
bounded variation. This is indeed due to the boundedness and continuity
of the quantities involved, namely $e^{-p_{s}}\eta_{s}'$,
$e^{-2p_{s}}\eta_{s}''$ and the controls $a_{i}$, $i=1,2$.
\end{pf}

Next, we solve equation (\ref{eap4}) for each of the three cases
described in equation~(\ref{Eqn3cases}) (this is straightforward, as
the resulting ODEs are separable). For ease of reference, we will label
the resulting equations as $B^K_c(\tau)$ with super- and sub-scripts
indicating relevant parameters. In the case $r=1$, we have that
\[
B^1_c(\tau) = \tfrac{1}{2}\log
\bigl(1-ce^{-2\tau} \bigr)\qquad\mbox{for some constant }c\in(-\infty, 1).
\]
The choice of $c$ allows any initial condition. Note that $c=0$ gives
the constant solution $B^1_0(\tau)\equiv0$. For any $c$, as $\tau
\rightarrow
\infty$, we see that $B^1_c(\tau)\rightarrow0$. The case $r=0$ gives
\[
B^0_c(\tau) = c \qquad\mbox{for some constant }c\in
\Re.
\]
Obviously, the choice of $c$ allows any initial condition. (This is
perhaps a bit pedantic, but we include it for the sake of
completeness.) Finally, $r=-1$ gives
\[
B^{-1}_c(\tau)=\tfrac{1}{2}\log
\bigl(1-ce^{2\tau} \bigr)\qquad\mbox{for some constant }c\in(-\infty, 1).
\]
Again, the choice of $c$ allows any initial condition, and $c=0$ gives
the constant solution $B^{-1}_c(\tau)\equiv0$. This time, though, if
$c\neq0$, then the solution heads to $\pm\infty$ as $\tau$
increases (in finite time for negative initial condition, and as $\tau
\rightarrow\infty$ for positive initial condition).

Continuing, we want to use the previous lemma and a judicious choice of
the parameter $c$ to bound the reachable set at time $t$. Recall that
$\bar{p}_0$ gives the initial metric $g_0$ and serves as the
target in
the target problem [and which as a section we write as $\Gamma(0)$].
The assumption that $g_0$ and $h$ have the same area implies that $\max
_{x\in M} \bar{p}_0(x) =\alpha\geq0$ and that $\min_{x\in M}
\bar{p}_0(x) =\beta\leq0$. Further, if either $\alpha$ or
$\beta$ is
zero then both are, meaning that $\bar{p}_0\equiv0$ and $g_0$ is
just $h$.

The logic of the proof of the following theorem explains why solutions
$q_{\tau}$ of equation (\ref{eap4}) are called barriers, in this context.

%
\begin{teo}\label{teoAprioriBounds}
Consider the target problem (for the normalized Ricci flow) where $h$
corresponds to one of the three constant curvature cases as discussed
above (and with $\alpha$ and $\beta$ as just described). For any
$t\geq0$, we have that
\[
\sup_{(x,p)\in V(t)} p \leq\cases{ \frac{1}{2}\log
\bigl(1-e^{-2t} \bigl(1-e^{2\alpha} \bigr) \bigr), &\quad if $r=-1$,
\vspace*{3pt}\cr
\alpha, &\quad if $r=0$,
\vspace*{3pt}\cr
\frac{1}{2}\log\bigl(1-e^{2t}
\bigl(1-e^{2\alpha} \bigr) \bigr), &\quad if $r=1$,}
\]
and
\[
\inf_{(x,p)\in V(t)} p \geq\cases{ \frac{1}{2}\log
\bigl(1-e^{-2t} \bigl(1-e^{2\beta} \bigr) \bigr), &\quad if $r=-1$,
\vspace*{3pt}\cr
\beta, &\quad if $r=0$,
\vspace*{3pt}\cr
\frac{1}{2}\log\bigl(1-e^{2t}
\bigl(1-e^{2\beta} \bigr) \bigr), &\quad if $r=1$ and $t<-\frac{1}{2}
\log\bigl(1-e^{2\beta} \bigr)$.}
\]
[If $\beta=0$, we set $-\frac{1}{2}\log(1-e^{2\beta}
)=\infty$.]
\end{teo}

\begin{pf} We start with the upper bound in the $r=-1$ case. We
consider some fixed but arbitrary $t\geq0$. Let $c^{\prime
}=e^{-2t} (1-e^{2\alpha} )$. Then
\[
B^{-1}_{c^{\prime}}(t) = \alpha\quad\mbox{and}\quad
B^{-1}_{c^{\prime}}(0) = \tfrac{1}{2}\log\bigl(1-e^{-2t}
\bigl(1-e^{2\alpha} \bigr) \bigr).
\]
Thus, by the previous lemma, if we start from a point $(x_0,\bar
{p}_{0})$ with $\bar{p}_{0}>B^{-1}_{c^{\prime}}(0)$, we have that
$p_{t}>B^{-1}_{c^{\prime}}(t) = \alpha$ with positive probability
(for any controls). By the definition of $\alpha$, this means that
$p_t$ is not in the target with positive probability. Since this holds
for any controls, it follows that $(x_0,\bar{p}_{0})$ is not in the
reachable set at time $t$, which we recall we denote $V(t)$. This
implies the upper bound on $\sup_{(x,p)\in V(t)} p$ given in the theorem.

For the lower bound in the $r=-1$ case, consider $c^{\prime}=
e^{-2t} (1-e^{2\beta} )$. Then
\[
B^{-1}_{c^{\prime}}(t) = \beta\quad\mbox{and}\quad
B^{-1}_{c^{\prime}}(0) = \tfrac{1}{2}\log\bigl(1-e^{-2t}
\bigl(1-e^{2\beta} \bigr) \bigr).
\]
Analogously to the argument for the upper bound, the previous lemma
implies that no point $(x_0,\bar{p}_{0})$ with $\bar
{p}_{0}<B^{-1}_{c^{\prime}}(0)$ can be in $V(t)$. This implies the
desired lower bound.

For the $r=0$ case, analogous arguments apply, using $c^{\prime}=
\alpha$ for the upper bound and $c^{\prime}= \beta$ for the lower bound.

Finally, we consider the $r=1$ case. The upper bound is proven just as
in the $K=-1$ case, using $c^{\prime}=e^{2t} (1-e^{2\alpha
} )$. The proof of the lower bound is similar, except that if
$t\geq-\frac{1}{2}\log(1-e^{2\beta} )$, we have that
$B_c^{1}(t)>\beta$ for any choice of $c\in(-\infty,1)$. Thus, these
arguments do not produce any lower bound for $\inf_{(x,p)\in V(t)} p$
in this case. On the other hand, if $t<-\frac{1}{2}\log
(1-e^{2\beta} )$, we can let $c^{\prime}=e^{2t}
(1-e^{2\beta} )$ and argue just as before.
\end{pf}

In light of the verification theorem, these conclusions can be restated
in terms of $\bar{p}_t$. Namely, we can replace $\sup_{(x,p)\in V(t)}
p$ in the above theorem with $\max_{x\in M} \bar{p}_t(x)$ and
$\inf_{(x,p)\in V(t)} p$ with $\min_{x\in M} \bar{p}_t(x)$. Nonetheless,
there is a reason to state the theorem as above. Suppose we consider
the same target problem (or problems, since there are three cases),
except that now we allow the target to be any (nonempty) closed set
$\Gamma$ such that $\max_{\Gamma} p=\alpha\geq0$ and $\max_{\Gamma}
p=\beta\leq0$, rather than just a smooth section
corresponding to a metric $g_0$ on $M$. Then we can still ask about the
reachable set at time $t\geq0$. Assuming that it is nonempty, the
bounds in the above theorem still hold (with the same proofs). This
shows that these bounds do not depend on the verification theorem and
the resulting connection with PDEs, or on the structure of the
reachable set, such as its smoothness or whether it is a section.
(Moreover, similar methods could be employed even if $\alpha$ and
$\beta$ were not assumed to be nonnegative and nonpositive,
resp.) It is this sense in which we refer to them as ``a priori
bounds.'' Of course, it is likely that these bounds are only \emph
{interesting} in light of their connection to the Ricci flow, as given
by the verification theorem.

We close this section with some easy observations about this theorem.
First of all, if $\alpha=0$, then $\sup_{(x,p)\in V(t)} p=0$ for all
$t\geq0$, and this holds in all three cases. Similarly, if $\beta=0$,
then $\inf_{(x,p)\in V(t)} p=0$ for all $t\geq0$, in all three cases.
Since one of $\alpha$ or $\beta$ being zero implies that both are, we
conclude that if either $\alpha$ or $\beta$ is zero, the reachable
set only contains points with $p=0$. On the other hand, every point
with $p=0$ will clearly be in the reachable set (just let the controls
be identically zero). Thus, we will have $V(t)=\{p\equiv0\}$ for all
$t\geq0$. This corresponds to the basic fact that if $g_0$ is already
a metric of constant curvature, then it is stationary under the
normalized Ricci flow.

In the case when $\alpha$ and $\beta$ are not zero, we see much
different behavior for the cases of the three different curvatures. For
$r=-1$, the bounds improve as $t$ increases, which we will see makes
this the easiest case to deal with. For $r=0$, the bounds are constant.
Finally, for $r=1$, the bounds get worse as $t$ increases, and the
lower bound even ceases to exist in finite time. This corresponds to
the well-known observation that the case of the sphere (or projective
space) is the hardest case to handle for Ricci flow on compact surfaces.

%
\begin{remark} It is worth pointing out that the above argument from
Theorem~\ref{teoAprioriBounds} is overkill in the $r=0$ case, since
then the result follows directly from the fact that $p_{\tau}$ is a
martingale and martingales have constant expectation.
\end{remark}

We finish this discussion with the following useful corollary which
plays an important role later on.

\begin{Cor}\label{c22}
For the case of $r=-1$, or equivalently, the case $\chi(M)<0$, the
solution $\bar{p}_{t}$ of the normalized Ricci flow converges to $0$
uniformly in the \mbox{$C^{0}$-}norm exponentially fast as $t\to\infty$.
\end{Cor}

The same arguments work in the case of unnormalized Ricci flow. We
record this here as follows.

%
\begin{teo}
For the unnormalized Ricci flow, as long as the stochastic target is
well defined up to time $t$,
\[
\sup_{(x,p)\in V(t)} p \leq\cases{ \frac{1}{2}\log
\bigl(e^{2\alpha} +t \bigr), &\quad if $r=-1$,
\vspace*{3pt}\cr
\frac{1}{2}\log
\bigl(e^{2\alpha}-t \bigr), &\quad if $r=1$ and $t<e^{2\alpha}$}
\]
and
\[
\inf_{(x,p)\in V(t)} p \ge\cases{ \frac{1}{2}\log
\bigl(e^{2\beta} + t \bigr), &\quad if $r=-1$,
\vspace*{3pt}\cr
\frac{1}{2}\log
\bigl(e^{2\beta}-t \bigr), &\quad if $r=1$ and $t<e^{2\beta}$.}
\]
\end{teo}

The only thing we should point out here is that there is a blow-up in
finite time for the case of $r=1$ and there is also a blow up in finite
or infinite time for the case of $r=-1$. This recovers the blow-up
results in the previous section, only this time we used uniformization.

%
\begin{remark} This theorem shows that for the unnormalized Ricci flow,
in the negative curvature case, the flow does not blow up in finite
time, at least in the $C^{0}$ topology. This is already a good
indication that the solution is defined for all times and corroborated
with the above theorem shows that the flow blows up at infinity. Thus,
this result is probably a better result (in the case of negative
constant curvature case) as the one obtained in Section~\ref{s3}.
\end{remark}

\section{Mirror coupling}\label{s5}

For the remainder of the paper, we assume that we have a smooth initial
metric and a smooth solution to the normalized Ricci flow for all time
(which we do since the initial conditions are smooth on a compact
surface). We are interested in studying the convergence to the constant
curvature limit according to the stochastic framework we have been developing.

We consider the cases of zero Euler characteristic and of negative
Euler characteristic, and we work relative to the underlying metric of
constant curvature, as in the previous section. The positive Euler
characteristic case (the sphere or projective plane) is well known to
be more difficult. This is largely due to the fact that there are many
constant curvature metrics in any given conformal class, so that it is
not clear in advance which one will be the limiting metric under
normalized Ricci flow (this is related to the issue of solitons). As a
result, we do not pursue this case.

We are assuming that we have a smooth solution to the normalized Ricci
flow for all time. This means that the reachable set is always a smooth
hypersurface transverse to the vertical fibers. From now on, we are
only interested in the successfully controlled process, so for
notational simplicity we will\vspace*{1pt} let $(x_{\tau},p_{\tau})$ always denote
that process [i.e., what we previously denoted $\hat{Y}_{\tau
}=Y_{\tau}(\hat{A})$]. Moreover, if $\bar{p}$ is the smooth
solution, we see that $p_{\tau}=\bar{p}_{t-\tau}(x_\tau)$. One
consequence of this is that we can generally restrict our attention to
the $x_{\tau}$ process. In particular, if we wish to couple two copies
of the successfully controlled process (so that they meet as quickly as
possible), it is enough to couple the $x_{\tau}$ marginals, since if
the processes meet on the manifold, then they also meet on the fiber.
In this sense, what we are doing is equivalent to just considering
Brownian motion on the underlying time-varying manifold, and so we see
again that running a Brownian motion along the solution flow (and
employing the stochastic techniques that apply in that situation) is
subsumed by the more general construction of the stochastic target problem.

A significant part of our results on the long-time convergence of the
normalized Ricci flow is based on coupling two copies of the marginal
process on $M$, which we denote by $x_{\tau}$ and $y_{\tau}$. Recall
that $x_{\tau}$ will be time-changed Brownian motion on $(M,h)$, with
the time change given by integrating $a=2e^{-2\bar{p}}$ along the
paths, and analogously for $y_{\tau}$, where we let $b$ denote the
instantaneous time-dilation (this is one significant advantage to
working relative to this fixed metric). Note that we have incorporated
the $\sqrt{2}$ normalization factor into the time-change, so that we
really do have Brownian motion with respect to $h$ as the underlying
object. This makes the stochastic analysis look a bit more standard.

We wish to implement the mirror coupling for $x_{\tau}$ and $y_{\tau
}$, where the mirror map is with respect to the fixed $h$ metric.
Viewed in this way, this is a fairly straightforward variant of the
mirror coupling for two Brownian motions on a smooth (nonvarying)
Riemannian manifold. We simply generalize to allow our processes to be
Brownian motions up to a random but smooth (in terms of the particle's
position in space--time) time-change. References for the standard
(nontime changed) construction are \cite{Elton} and \cite
{CranstonJFA}, and we proceed by modifying this as necessary and by not
belaboring the aspects which carry over without modification.

Note that, since we are working only in the cases of nonpositive Euler
characteristic, $a$ (and thus also $b$) is bounded above and below by
positive constants (depending only on the initial metric) for all time,
by the results of the previous section.

First, let $C_M$ be the subset of $M\times M$ consisting of points
$(x,y)$ such that $y\in\Cut(x)$ [which is equivalent to $x\in\Cut
(y)$], and let $D_M$ be the diagonal subset of $M\times M$. Then let
$E_M$ be $M\times M$ minus $C_M$ and $D_M$. Note that the distance
function $\dist(x,y)$ is smooth on $E_M$, and that the direction of
the (unique) minimal geodesic from $x$ to $y$ is smooth on $E_M$. Let
$(x,y)\in E_M$; then the mirror map is the isometry from $T_xM$ to
$T_yM$ given by reflection along the minimal geodesic connecting $x$
and $y$. We see that the mirror map is smooth (on $E_M$, which is where
it is defined). As a result, there is no problem in running the mirror
coupling as long as the joint process\vspace*{1pt} is in $E_M$. That is, for
one-dimensional independent Brownian motions $W^1_{\tau}$ and
$W^2_{\tau}$, consider the system of SDEs
\begin{eqnarray*}
dx_{\tau} &=& a_{\tau} \Biggl[ \Biggl[\sum
_{i=1}^2 \mathfrak{e}_{i}(x_{\tau})
\circ dW_{\tau}^i \Biggr] \Biggr],
\\
dy_{\tau} &=& b_{\tau} \Biggl[ \Biggl[\sum
_{i=1}^2 \Psi_{\tau} \bigl[\mathfrak
{e}_{i}(y_{\tau}) \bigr] \circ dW_{\tau}^i
\Biggr] \Biggr],
\end{eqnarray*}
where $\Psi_{\tau}=\Psi(x_{\tau},y_{\tau}) =m_{x_{\tau},y_{\tau
}}\mathfrak{e}(x_{\tau})\mathfrak{e}(y_{\tau})^{-1}$ with $m_{x,y}$
being the mirror map, namely parallel transport followed by reflection
with respect to the perpendicular to the geodesic from $x$ to $y$. Then
the coefficients are smooth in both space and time, so the system
admits a unique strong solution, up until the first time the process
leaves $E_M$.

The point of the coupling is to get the particles to meet, so we turn
our attention to this issue next. First, note that the marginals
$x_{\tau}$ and $y_{\tau}$ are time-changed Brownian motions as
desired, so we are coupling the right processes. The natural object of
study is the distance between the particles, with respect to the fixed
metric $h$. We denote this distance by $\rho_{\tau}$. It is a
(continuous, nonnegative) semi-martingale, so we derive the SDE that
it satisfies by It\^o's formula. This is the standard computation with
the factors of $a$ and $b$ included, so we will be brief. For more on
this, see \cite{Elton}, Section~6.5.

The martingale part is easily seen to be $(a+b) \,d\hat{W}_{\tau}$
for some Brownian motion~$\hat{W}_{\tau}$, whether we are in the $r=
0$ or $r= -1$ case. (In what follows, we use $\hat{W}_{\tau}$ to
denote some Brownian motion, which may change from appearance to
appearance, in order to more conveniently describe the SDE satisfied by
a given process.) As for the drift, the only contribution comes from
the second derivative of the distance with respect to the diffusions
perpendicular to the geodesic from $x$ to $y$, which is computed in
terms of the index of the appropriate Jacobi field along the geodesic
from $x$ to $y$. We now summarize the computation.

Let $\gamma$ be the unique minimal geodesic from $x$ to $y$
(parametrized by arc length), and let $E$ be a unit vector field along
$\gamma$, perpendicular to $\gamma$ (this determines $v$ uniquely up
to sign, and either of choice of sign is fine). Then we want the Jacobi
field $w(s)E(\gamma(s))$ where $w\dvtx[0,\rho]\rightarrow\bR$ satisfies
\[
\ddot{w}+rw=0, \qquad w(0)=a, \qquad w(\rho)=b.
\]
When $r\equiv0$, the solution space to this differential equation is
spanned by $1$ and $s$. Taking the boundary conditions into account, we
see that the solution is
\[
w(s) = a + \frac{b-a}{\rho} s.
\]
Similarly, when $r\equiv-1$, the solution space is spanned by $\cosh
s$ and $\sinh s$, and the boundary conditions give
\[
w(s) = \frac{a\sinh(\rho-s) + b\sinh s}{\sinh\rho}.
\]

The index of each of these Jacobi fields is given by
\[
\int_{\gamma} \bigl(\dot{w}^2 -rw^2
\bigr) \,ds = w(\rho)\dot{w}(\rho) -w(0)\dot{w}(0),
\]
where the right-hand side is obtained from the left via integration by
parts and the differential equation satisfied by $w$. Thus, for
$r\equiv0$, the index is
\[
b \biggl(\frac{b-a}{\rho} \biggr)- a \biggl(\frac{b-a}{\rho} \biggr)=
\frac{(a-b)^2}{\rho},
\]
and for $r\equiv-1$, the index is
\begin{eqnarray*}
&& b \biggl[a\sinh\rho+ (b-a\cosh\rho)\frac{\cosh\rho
}{\sinh\rho} \biggr]-a \biggl[
\frac{b-a\cosh\rho}{\sinh\rho
} \biggr]
\\
&&\qquad = \bigl(a^2+b^2 \bigr)\coth
\rho-2ab\frac{1}{\sinh\rho}
\\
&&\qquad = (a-b )^2 \coth\rho+2ab\tanh\frac{\rho}{2}.
\end{eqnarray*}

Putting this together, we see that
\[
d\rho_{\tau} = \cases{\displaystyle (a+b) \,d\hat{W}_{\tau} + \frac{1}{2}
\biggl[\frac{ (a-b
)^2}{\rho} \biggr] \,d\tau, &\quad for $r= 0$,
\vspace*{3pt}\cr
\displaystyle (a+b) \,d
\hat{W}_{\tau} + \frac{1}{2} \biggl[ (a-b )^2 \coth
\rho+2ab\tanh\frac{\rho}{2} \biggr] \,d\tau, &\quad for $r= -1$.}
\]
As mentioned, this holds until the first exit time from $E_M$.
Following the reasoning in \cite{Elton}, Section~6.6, one can show
that $\hat{W}_{\tau}=-\sum_{i=1}^{2}\langle\mathfrak
{e}_{i}(x_{\tau}),\dot{\gamma}_{\tau}(0) \rangle \,dW^{i}_{\tau}$
where $\gamma_{\tau}$ is the minimal geodesic joining $x_{\tau}$ and
$y_{\tau}$ starting at $x_{\tau}$ and running at unit speed.

When the particles meet, we have achieved our goal, and we can either
stop the process, or allow it to continue to run as $x_{\tau}=y_{\tau
}$. Either way, there is no problem caused by the process hitting the
diagonal. On the other hand, we do need to find a way to continue the
process past the first hitting time of the cut locus. Showing that this
is possible constitutes the content of the remaining of this section.

%
\begin{teo}\label{thmc}
Let $M=(M,h)$ be a compact surface of constant curvature $0$ or $-1$,
and let $a=a(x,\tau)$ and $b=b(y,\tau)$ be as above. Then there
exists a process $(x_{\tau},y_{\tau})$ on $M\times M$, started from
any $(x_0,y_0)\notin D_M$ and run until the first time of hitting
$D_M$, such that:
\begin{longlist}[(1)]
\item[(1)] The marginals $x_{\tau}$ and $y_{\tau}$ are time-changed
Brownian motions, with times changes given by $a$ and $b$, respectively.
\item[(2)] The distance (relative to $h$) between $x_{\tau}$ and $y_{\tau
}$, denoted $\rho_{\tau}$, satisfies the SDE
%
\begin{equation}
\label{ec1} d\rho_{\tau} = \cases{ \displaystyle (a+b) \,d\hat{W}_{\tau} +
\frac{1}{2} \biggl[\frac{ (a-b
)^2}{\rho} \biggr] \,d\tau-L_{\tau},
\cr
\qquad\mbox{for $r\equiv0$,}
\vspace*{3pt}\cr
\displaystyle (a+b) \,d\hat{W}_{\tau} + \frac{1}{2} \biggl[ (a-b
)^2 \coth\rho+2ab\tanh\frac{\rho}{2} \biggr] \,d
\tau-L_{\tau},
\cr
\qquad\mbox{for $r\equiv-1$,}}
\end{equation}
where $L_{\tau}$ is a nondecreasing process which increases only when
$(x_{\tau},y_{\tau})\in C_M$ [and the set of $\tau$ for which
$(x_{\tau},y_{\tau})\in C_M$ has measure zero almost surely].
\end{longlist}
\end{teo}

\begin{pf}
As mentioned, the only issue is extending the construction mentioned
above past the first hitting time of $C_M$. As usual, we proceed by
approximation.

Choose small, positive $\delta$. Until $y_{\tau}$ is within distance
$\delta$ of $\Cut(x_{\tau})$, we run the mirror coupling as above.
When $y_{\tau}$ hits distance $\delta$ from $\Cut(x_{\tau})$, at
time $\tau_1$, we start to run $x_{\tau}$ and $y_{\tau}$ as
independent (time-changed) Brownian motions. This continues until
$y_{\tau}$ is distance $2\delta$ from $\Cut(x_{\tau})$, at time
$\tau_2$, when we again run them under the mirror coupling. We
continue this procedure, so that we have a joint process $(x^{\delta
}_{\tau},y^{\delta}_{\tau})$ which evolves under the mirror coupling
on intervals of time $[\tau^{\delta}_{2n},\tau^{\delta}_{2n+1})$
and as independent processes on intervals of time $[\tau^{\delta
}_{2n-1},\tau^{\delta}_{2n})$, for nonnegative integers $n$, where
the $\tau_{m}$ are the alternating hitting times of the $\delta$ and
$2\delta$ level sets of the distance from $y_{\tau}$ to $\Cut
(x_{\tau})$. [This is less symmetric than switching when the joint
process is distance $\delta$ or $2\delta$ from $C_M$, in the product
metric on $M\times M$, but it is more convenient to compute with and
works in essentially the same way. In particular, the condition $\dist
(y_{\tau},\Cut(x_{\tau}))<\delta$ determines an open neighborhood
of $C_M$ in $M\times M$, and these neighborhoods converge to $C_M$ as
$\delta\rightarrow0$.]

It is clear that $x^{\delta}_{\tau}$ and $y^{\delta}_{\tau}$ are
time-changed Brownian motions as desired, and that the $\rho^{\delta
}_{\tau}$ satisfies the desired SDE when $(x^{\delta}_{\tau
},y^{\delta}_{\tau})$ is distance more than $2\delta$ from~$C_M$. It
is also clear that when $x^{\delta}_{\tau}$ and $y^{\delta}_{\tau}$
are being run independently, $\rho^{\delta}$ satisfies an SDE of the form
\[
d\rho^{\delta}_{\tau} = u \,d\hat{W}_{\tau} + v \,d\tau-\hat
{L}_{\tau},
\]
where $u$ and $v$ are bounded (with bound depending only on $M$ and the
bounds on $a$ and $b$) and $\hat{L}_{\tau}$ is a nondecreasing
process which increases only when $(x^{\delta}_{\tau},y^{\delta
}_{\tau})\in C_M$ (again, see the references mentioned above).

Suppose we show that, for any $t>0$ and any $\varepsilon>0$, the
expected amount of time on the interval $[0,t]$ that $y^{\delta}_{\tau
}$ spends within distance $\varepsilon$ of $\Cut(x^{\delta}_{\tau
})$ goes to zero with $\varepsilon$ at a rate independent of $\delta
$. Then the amount of time on $[0,t]$ that $y^{\delta}_{\tau}$ spends
within distance $2\delta$ of $\Cut(x^{\delta}_{\tau})$ goes to zero
with $\delta$ (just let $\varepsilon=2\delta$), and thus the amount
of time the particles spend being run independently goes to zero almost
surely as $\delta\searrow0$. (The point is that the total amount of
time spent in the union of all intervals of the form $[\tau^{\delta
}_{2n-1},\tau^{\delta}_{2n})\cap[0,t]$ goes to zero uniformly, even
though the number of such intervals that are nonempty might increase
without bound as $\delta$ goes to zero.) So letting $\delta$ go to
zero, we know there is at least one subsequence along which the process
$(x^{\delta}_{\tau},y^{\delta}_{\tau})$ converges to a limiting
process $(x_{\tau},y_{\tau})$ (by compactness). That this limiting
process satisfies the first property in the theorem is immediate, since
$x^{\delta}_{\tau}$ and $y^{\delta}_{\tau}$ do for all $\delta>0$.
For the second property, note that the contributions from the $u\,
d\tilde{W}_{\tau}$ term and the $v \,d\tau$ term go to zero by the
boundedness of $u$ and $v$ and the fact that the expected length of
time over which these terms are integrated goes to zero. It follows
that the martingale part and the ``regular'' part of the drift come
entirely from the SDE for $\rho$ induced by the (mirror) coupling, and
that the time spent at $C_M$ [equivalently, the time spent with
$y_{\tau}\in\Cut(x_{\tau})$] has measure zero. Finally, the $\hat
{L}_{\tau}$ contribution converges to a term $L_{\tau}$ as indicated.

Thus, to complete the proof, we need only show that the expected amount
of time on the interval $[0,t]$ that $y^{\delta}_{\tau}$ spends
within distance $\varepsilon$ of $\Cut(x^{\delta}_{\tau})$ goes to
zero with $\varepsilon$ at a rate independent of $\delta$. Here, we
will take advantage of the specific geometry with which we are dealing
much more so than in the general approximation procedure just
described. Because the argument is somewhat lengthy, we divide it into
four steps. Moreover, at the end of the first step, we highlight as a
``key fact'' the most important aspect of the geometry for our purposes.\vspace*{6pt}

\textit{Step} 1. Here, we describe the structure of the cut locus, which
is also summarized in Figure~\ref{Fig1} below.

%
\begin{figure}

\includegraphics{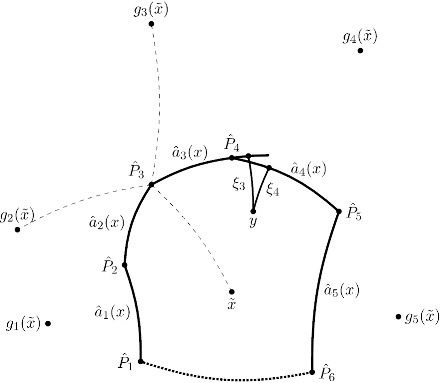}

\caption{The fundamental domain for a negatively curved surface as a
subset of the universal cover, which is the hyperbolic plane.
The pre-image of $x$ on the universal cover is $\{\tilde{x},
g_{1}(\tilde{x}),g_{2}(\tilde{x}),\dots\}$, with $g_{1},g_{2},\dots$
in the Deck group. The\vspace*{1pt} arc $\hat{a}_{i}(x)$ is equidistant from
$\tilde{x}$ and $g_{i}(\tilde{x})$. For instance, the vertex
$\hat{P}_{3}$ is equidistant to $\tilde{x}$, $g_{2}(\tilde{x})$ and
$g_{3}(\tilde{x})$. Here, $\xi_{i}$ is the distance
from $y$ to $a_{i}$, placed by the geodesic that realizes this
distance. Notice that the closest point to $y$ on
$a_3(x)$ falls outside the arc $\hat{a}_{3}(x)$, which is one of the
reasons for introducing a tubular neighborhood around each arc later in
the proof.}\label{Fig1}
\end{figure}

In particular, note that, because we deal with surfaces of nonpositive
curvature, there are no conjugate geodesics, and a point $z$ is in
$\Cut(x)$ exactly when there is more than one minimal geodesic from
$x$ to $z$.
In this case, there are necessarily only finitely many such geodesics,
and the exponential map at $x$ is a local diffeomorphism near (the
tangent vector corresponding to) each of these geodesics.

More concretely, if we let $\tilde{M}$ be the universal cover of $M$
(with the metric induced by $M$) and we let $\tilde{x}$ denote a
distinguished lift of $x$ to $\tilde{M}$, then all other lifts of $x$
can be written as $g(\tilde{x})$ for $g\in G$, the group of Deck
transformations. For a complete treatment of the Deck transformation in
a more general framework, see \cite{Hatcher}. Then one can construct
an open fundamental polygon $P$ (also called a Voronoi region or
Dirichlet region) around $\tilde{x}$ by taking all points of $\tilde
{M}$ that are closer to $\tilde{x}$ than to any other lift of $x$.
Note that $P$ is convex. The boundary of this fundamental polygon
$\partial P$ has each side given by (a portion of) the curve of points
equidistant from $\tilde{x}$ and $g(\tilde{x})$ for some $g$.
Moreover, let $q\dvtx\tilde{M}\rightarrow M$ be the covering map (and
local isometry) given by quotienting by the action of $G$. Then if $z$
is a point on a side (but not a corner) of $\partial P$, $\gamma_0$ is
the minimal geodesic from $\tilde{x}$ to $z$, and $\gamma_1$ is the
minimal geodesic from (the appropriate) $g(\tilde{x})$ to $z$, we see
that $q(z)\in\Cut(x)$ and that $q(\gamma_0)$ and $q(\gamma_1)$ are
the two minimal geodesics from $x$ to $q(z)$ (in $M$). Each corner of
$\partial P$ corresponds to a point $z$ where there are at least two
(but only finitely many) lifts of $x$, say $g_1(\tilde{x}),\ldots,
g_k(\tilde{x})$, such that $z$ is equidistant from $\tilde{x}$ and
each of these other lifts, with corresponding minimal geodesics $\gamma
_1,\ldots,\gamma_k$, and we obtain the minimal geodesics from $x$ to
$q(z)\in\Cut(x)$ as $q(\gamma_0),q(\gamma_1),\ldots,q(\gamma_k)$.
More globally, $q(P)=M\setminus\Cut(x)$ and $q(\partial P)= \Cut(x)$.

The purpose of the above is that it gives us a way to understand how
$\Cut(x_{\tau})$ evolves as $x_{\tau}$ evolves. Indeed, in our
situation, it would be possible to give a fairly\vspace*{1pt} precise description,
since we deal with surfaces of constant curvature. If $r=0$, $\tilde
{M}$ is $\bR^2$ with the Euclidean metric, and the group of Deck
transformations consists of translations by a lattice, if $M$ is
orientable, and thus a torus, or is generated by such translations plus
a reflection, if $M$ is nonorientable, and hence a Klein bottle.
Similarly, if $r=-1$, $\tilde{M}$ is the hyperbolic space $\mathbb
{H}^2$, and the group of Deck transformations consists of a Fuchsian
group, if $M$ is orientable, or is generated by such a group plus a
reflection, if $M$ is nonorientable, and these can be realized fairly
concretely using the upper half-space model of the hyperbolic plane.
Nonetheless, such an argument by cases is tedious and provides more
than we need here. Instead, we give a more general argument.

We choose some $\varepsilon_0>0$, with the intent of studying the
distance to the cut locus in, roughly, an $\varepsilon_0$-neighborhood
of the cut locus, and we will assume $\varepsilon_0$ is small enough
to satisfy various conditions as we go. Recall that the fundamental
polygon $P(x)$ (where we now allow the possibility of making the
dependence on the point $x$ from above explicit) has a finite number of
smooth sides (which we think of as closed segments by including the
corners), which means that $\Cut(x)$ is given by the union of a finite
number of smooth (closed) arcs, which vary smoothly with $x$ [this
smooth dependence follows from the fact that $\tilde{x}$ and all of
the $g(\tilde{x})$ in $\tilde{M}$ vary smoothly with $x$, and thus so
do the curves of points equidistant between them]; denote these arcs by
$\hat{a}_1(x),\ldots,\hat{a}_K(x)$, for some positive integer $K$ as
are shown in Figure~\ref{Fig1} below. (For clarity in the figures, we
label the vertices of $P$ by $\hat{P}_i$, with $\hat{a}_i$ being the
side between $\hat{P}_i$ and $\hat{P}_{i+1}$, with indices understood
modulo $K$.)

Further, we let $a_i(x)$ be a (closed) arc which smoothly extends $\hat
{a}_i(x)$ some small amount (independent of $x$) in each direction. We
can accomplish this by extending each side of the fundamental polygon a
small amount past the two adjacent corners; indeed, in the constant
curvature case, the $\hat{a}_i(x)$ are geodesics segments, and thus we
can extend them to slightly longer geodesic segments. (This is the
general case when $a_i$ is a segment with two endpoints. It is possible
for $a_i$ to be a closed geodesic loop, in which case $a_i$ is just
$\hat{a}_i$.) Next, consider a tubular (open) neighborhood around
$a_i(x)$ consisting of all points that lie on a geodesic perpendicular
to the interior of $a_i(x)$ at distance less than $\varepsilon_0$, and
denote this neighborhood by $Q_i(x)$. Note that $Q_i(x)$ also varies
smoothly with $x$. We now assume that $\varepsilon_0$ is small enough
so that there is always only one such minimal geodesic from $a_i(x)$ to
any point in $Q_i(x)$. Because $Q_i(x)$ varies smoothly, $M$ is
compact, and there are only finitely many sets $Q_i(x)$, it is indeed
possible to choose such $\varepsilon_0>0$ for all $x\in M$ and $i\in\{
1,2,\ldots, K\}$; see Figure~\ref{Fig2}. (Again for clarity in the
figures, we let $P_i$ and $P_{i+1}$ be the endpoints of the extended
arc $a_i$.)

Now let $\xi_i(y)$ be the distance of $y$ from $a_i(x)$. Of course
$\xi_i=\xi_i(y)$ also depends on $x$, through its dependence on
$a_i(x)$. We see that $\xi_i$ is Lipschitz on all of \mbox{$M\times M$} (in
fact, $\xi_i$ is locally given by the minimum or maximum of a finite
number of smooth functions), smooth in both $x$ and $y$ on
$Q_i(x)\setminus a_i(x)$, and convex at $a_i(x)$ [indeed, the signed
distance is smooth in a neighborhood of any point in the interior of
$a_i(x)$, and $\xi_i$ is just the absolute value of this signed
distance]. We also let $S_i(x; \varepsilon)$ be the (closed) set
consisting of all points that lie on a geodesic perpendicular to $\hat
{a}_i(x)$ at a distance no more than $\varepsilon$, for any
$0<\varepsilon<\varepsilon_0/2$. See picture Figure~\ref{Fig2}
below for an illustration of the relevant elements.

%
\begin{figure}

\includegraphics{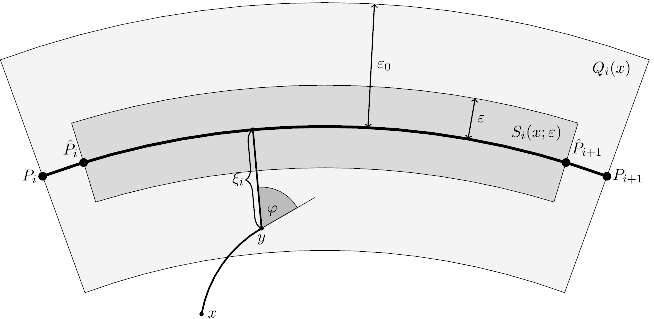}

\caption{The arc $\hat{a}_{i}$ from Figure~\protect\ref{Fig1} is
the arc $\hat{P}_{i}\hat{P}_{i+1}$ and the extension $a_{i}$
described above is given by $P_{i}P_{i+1}$. The light gray area is
$Q_{i}(x)$ while the darker gray area is $S_{i}(x;\varepsilon)$. In
this picture, $\phi$ is the angle between the minimal geodesic joining
$x$ to $y$ and the minimal geodesic from $y$ to $a_i$. We will
frequently think of this tubular neighborhood as lifted to the
universal cover.}
\label{Fig2}
\end{figure}

Consider a point $y$ such that $ \dist(y,\Cut(x)) <\varepsilon_0/2$.
If $y$ is not in $\Cut(x)$, then the closest point (or points) to $y$
in $\Cut(x)$ is in the interior of an $a_i(x)$. This follows from the
fact that the fundamental polygon $P(x)$ is convex, and thus the
closest boundary point to any interior point is in the interior of an
edge (i.e., the closest point is not a corner). It follows that, for
any $0<\varepsilon<\varepsilon_0/2$ and $x\in M$,
\[
\bigl\{ y\dvtx \dist\bigl(y,\Cut(x)\bigr) \leq\varepsilon\bigr\}
\subset\bigcup
_{i=1}^{K} S_i(x; \varepsilon).
\]
So, in order to control the expected amount of time on the interval
$[0,t]$ that $y^{\delta}_{\tau}$ spends within distance $\varepsilon
$ of $\Cut(x^{\delta}_{\tau})$, it is enough to control the expected
amount of time on the interval $[0,t]$ that $y^{\delta}_{\tau}$
spends in $S_i(x^{\delta}_{\tau}; \varepsilon)$, for each $i$.

Before we move on to the next step, we make an important point, which
will be in fact the backbone of the argument, and comes from the fact
that on nonpositively curved manifolds there are no conjugate points.
Let $\phi$ be the angle between the minimal geodesic joining $x$ to
$y$ and the minimal geodesic from $y$ to $a_i$, as shown in
Figures~\ref{Fig2} and \ref{Fig3}. Then we claim that $\llvert
\phi\rrvert$ is
bounded away from $\pi/2$ on $Q_i$, with the bound depending only on
$M$ and $\varepsilon_0$. To see this, first note that, any geodesic
from $x$ to $a_i$ cannot be tangent to $a_i$. Indeed, this is so
because in the Euclidean plane and the hyperbolic half space, the
curves which are equidistant to two points are geodesics and on any
manifold a geodesic curve is uniquely defined by a point and the
tangent at the point. Thus, the if the arcs from $x$ to $z$ would be
tangent, this would mean that $x$ is on the arc $a_i$ which is a
contradiction. It follows thus that any geodesic from $x$ to a point on
$a_i$ intersects $a_i$ transversally. Thus, if $z$ is a point on any of
the curves $a_i$, the angle between the geodesic arcs $xz$ and $a_i$ is
always positive, and it varies continuously as $z$ moves along $a_i$.
In addition, since the fundamental polygon $P_i$ changes continuously
with $x$ (and $M$ is a compact manifold) we see that there is a value
$\omega>0$ which depends on the manifold $M$ and the length of the
extended arcs $a_i$, such that for any $z$ on any of the $a_i$ arcs,
the angle between the geodesic arc $xz$ and $a_i$ belongs to $[\omega
,\pi/2]$.

%
\begin{figure}

\includegraphics{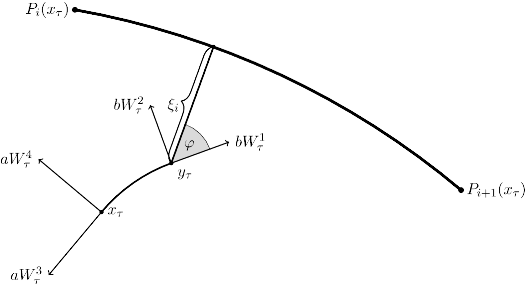}

%
%
%
%
%
%
%
%
%
%
%
\caption{This is the picture of the arc $a_i(x_{\tau})$ together with
the distances from $y_\tau$ to $a_i(x_{\tau})$ and to~$x_\tau$. When
$x_{\tau}$ and $y_{\tau}$ move independently, the 2-dimensional
driving Brownian motions $(W^{3},W^{4})$ and $(W^{1},W^{2})$ are
independent while for the mirror coupling case they are the same (i.e.,
$W^{1}=W^{3}$ and $W^{2}=W^{4}$).}
\label{Fig3}
\end{figure}

Next, suppose the point $y$ approaches a point $z\in a_i$ smoothly,
from $Q_i\setminus a_i$ (visually, we think of letting $\xi_i$ go to
zero in Figures~\ref{Fig2} or \ref{Fig3}). Then the limit of $\llvert
\phi
\rrvert$ is the angle between $xz$ and the (``outward pointing'') normal
vector to $a_i$ at $z$ (this follows from writing everything up to
first order at $z$), which in turn is $\pi/2$ minus the angle between
$xz$ and $a_i$. Thus, the limit of $\llvert\phi\rrvert$ as
$y$ approaches $z$ is
bounded from above by $\pi/2-\omega$. (As this argument makes clear,
this is just a simple consequence of the transversality of geodesics
that do not coincide.) Again by continuity and compactness, this
implies that there is some neighborhood of $a_i$ where the absolute
value of $\phi$ is bounded from above by some constant less than~$\pi/2$.

Thus, if we take $\varepsilon_0$ small enough and the point $y$ moves
in any of the sets $Q_i$, the absolute value of the angle $\phi$
introduced above is bounded away from $\pi/2$ with the bound only
depending on $M$, the length of the extended arcs $a_i$, and
$\varepsilon_0$. Equivalently, $\cos\phi$ is bounded from below by a
positive constant under the same conditions. Because this is one of the
key geometric facts underlying our argument, we highlight it separately here.

\begin{fact*}\label{r1} There is a constant $\phi_{0}<\pi/2$ such
that for small enough $\varepsilon_{0}$, and any $y\in Q_{i}(x)$,
%
\begin{equation}
\label{ecphi} \llvert\phi\rrvert\le\phi_{0} <\pi/2.
\end{equation}
\end{fact*}

From now on, we assume that $\varepsilon_0$ is small enough so that
this holds.

(We note that if we consider a high-dimensional compact manifold of
nonpositive sectional curvature, the analogous fact holds relative to
the hypersurface components of the cut locus. For this and related
reasons, the present argument extends naturally to higher dimensions.
However, if we allow positive curvature, the structure of the cut locus
can change significantly, and new ideas would be required to extend
this method of proving the existence of the mirror coupling.)\vspace*{6pt}

\textit{Step} 2. Here, we study the evolution of $\xi_i$ under the
process by controlling the SDE it satisfies, both when the particles
are running independently and when they are running under the mirror
coupling. We also (and much more briefly) derive an SDE which governs
how quickly $y^{\delta}_{\tau}$ can move from the complement of
$Q_i(x^{\delta}_{\tau})$ to $S_i(x^{\delta}_{\tau}; \varepsilon_0/2)$.

Let $\xi_{i,\tau}$ be, as usual, the process $\xi_i(y^{\delta
}_{\tau})=\dist(y^{\delta}_{\tau},a_i(x^{\delta}_{\tau
}) )$. From the convexity properties of $\xi_i$ and the It\^
o--Tanaka formula, we see that $\xi_{i,\tau}$ is a semi-martingale.
Next, suppose that $y^{\delta}_{\tau}\in Q_i(x^{\delta}_{\tau})$.
There are two cases to consider, the one when the particles are running
independently, and the one when they are running under the mirror
coupling. Since we will be assuming either one or the other of these
cases in what follows, we will drop the superscript $\delta$'s in the
notation, making it less cumbersome.

We\vspace*{1pt} begin with some observations that apply in either case. Referring to
Figure~\ref{Fig3}, we run the processes $x_{\tau}$ and $y_{\tau}$
as indicated, without yet assuming that $(W^{1},W^{2})$ and
$(W^{3},W^{4})$ are either independent or identical. Then, since the
distance function $\xi_{i}$ is smooth away from 0 and convex at $0$,
we can use It\^o--Tanaka formula to get that
%
\begin{equation}
\label{ecxi}\qquad d\xi_{i,\tau}=-b\cos\phi \,dW_{\tau}^{1}-b
\sin\phi \,dW_{\tau
}^{2}+r_{i} \,dW_{\tau}^{3}+s_{i}
\,dW_{\tau}^{4}+v_{i} \,d\tau+dL_{i},
\end{equation}
where $\llvert r_{i}\rrvert, \llvert s_{i}\rrvert,\llvert
v_{i}\rrvert$ are bounded by some constants
depending only on $M$, $\varepsilon_{0}$ and the bounds on $a$ and
$b$, and where $L_{i}$ is a nondecreasing process increasing only when
$\xi_{i}$ is $0$. Notice the minus sign in the first term on the
left-hand side above equation, which is due to the fact that $\xi_{i}$
decreases as $y$ approaches $a_{i}$ because the gradient of $\xi_i$
points opposite of the minimal geodesic from $y_{\tau}$ to $a_i$.
Notice also that the first two terms of the martingale part are
obtained by fixing the point $x$ and taking the derivative with respect
to $y$, while the last two terms of the martingale part come from
fixing $y$ and taking the derivative with respect to $x$ (in this case
the arc $a_{i}$ changes with $x$).

The martingale part of equation (\ref{ecxi}) is controlled by its
quadratic variation process; equivalently, the martingale part is a
time-changed (one-dimensional) Brownian motion, and thus controlled by
the time change. Clearly, the precise behavior of the quadratic
variation is different in our two cases (the independent case and the
mirror-coupled case). But in either case, our goal now is to show that
the martingale part is of the form $u_i \,d\hat{W}_{\tau}$ where
$\hat{W}_{\tau}$ is a Brownian motion and $u_i$ a process such that
$0<\alpha\le u_i \le\beta$ with $\alpha$ and $\beta$ two constants
independent of $\varepsilon$ and $\delta$. The purpose is that, if
this is true, standard methods of stochastic analysis will allow is to
estimate the amount of time that $y_{\tau}$ spends near $a_i(x_{\tau
})$, which is our overall task. In particular, the upper bound by some
$\beta$ already follows from equation (\ref{ecxi}), in both cases.
Thus, the real work is in obtaining the lower bound, and for this we
treat the two cases separately.

If the particles $x_{\tau}$ and $y_{\tau}$ evolve independently, then
$W^{1}$, $W^{2}$, $W^{3}$, and $W^{4}$ in equation (\ref{ecxi}) are
independent, and thus the martingale part can be written as $\sqrt
{b^{2}\cos^{2}\phi+b^{2}\sin^{2}\phi+r^{2}_{\tau}+s^{2}_{\tau}}
\,d\hat{W}_{\tau}=\sqrt{b^{2}+r^{2}_{\tau}+s^{2}_{\tau}} \,d\hat
{W}_{\tau}$. Because $b$ is bounded from below by a positive constant,
this proves that, in the case the particles run independently, for
$y_{\tau}\in Q_i(x_{\tau})$, $\xi_{i,\tau}$ satisfies the SDE
\[
d\xi_{i,\tau} = u_{I,i} \,d\hat{W}_{\tau} +
v_{I,i} \,d\tau+dL_{I,i},
\]
where $u_{I,i}$ and $\llvert v_{I,i}\rrvert$ are bounded and
$u_{I,i}$ is bounded
from below by a positive constant, with all of these bounds depending
only on $M$, $\varepsilon_0$, and the bounds on $a$ and $b$, and where
$L_{I,i}$ is a nondecreasing process that increases only when $\xi
_{i,\tau} =0$. (Here, the subscript $I$ is meant to denote that these
are the coefficients for the SDE induced by running the particles
independently.)

Now we wish to perform a similar analysis when the particles are being~run under the mirror coupling. The issue now is that, in this case, the
Brownian motions in equation (\ref{ecxi}) are correlated by
$W_{3}=W^{1}$ and $W_{4}=W_{2}$.  Therefo\-re, the martingale part is of
the form
$\sqrt{(-b\cos\phi+r_{\tau})^{2}+(-b\sin^{2}\phi+s_{\tau
})^{2}} \,d\hat{W}_{\tau}$. To show that the coefficient is bounded
from below by a positive constant, it is enough to show that at least
one of the squares under the square root stays bounded from below.
Recall now that $\llvert\phi\rrvert$ is bounded away from
$\pi/2$, which we noted
as our ``key fact'' earlier and, therefore, $b\cos\phi$ is bounded
away from $0$. Our strategy in what follows is to show that the term
$r_{\tau}$ does not spoil this property (i.e., we want to make sure
that the contribution to the quadratic variation coming from moving
$y_{\tau}$ by $dW^1_{\tau}$ is not cancelled by the movement of
$x_{\tau}$ by $dW^3_{\tau}=dW^1_{\tau}$). More precisely, we are
going to show that $r_{\tau}$ is actually negative and this proves
that $-b\cos\phi+r_{\tau}$ stays away from 0, which is enough to get
the desired conclusion.

In this case, we write the evolution for $\xi_{i,\tau}$ in the form
%
\begin{equation}
\label{EqnLocalXiCoupled} d\xi_{i,\tau} = u_{C,i} \,d\hat{W}_{\tau} +
v_{C,i} \,d\tau+dL_{C,i},
\end{equation}
where $u_{C,i}=\sqrt{(-b\cos\phi+r_{\tau})^{2}+(-b\sin^{2}\phi
+s_{\tau})^{2}}$. (Here, the subscript $C$ denotes that the
coefficients for the SDE are induced by running the particles mirror
coupled.) Also, recall that the particles never run under the mirror
coupling when $y_{\tau}$ hits $\Cut(x_{\tau})$ (for any $\delta$).
Thus, when considering the present case, we have that the geodesic
between $x_{\tau}$ and $y_{\tau}$ along which we perform the mirror
coupling evolves continuously. That is, essentially, Figure~\ref
{Fig3} evolves continuously, and in particular, the vectors along
which the diffusions $W^{1}$, $W^{2}$, $W^{3}$ and $W^{4}$ occur and
the angle $\phi$ evolves continuously.


Since the martingale part of $d\xi_{i,\tau}$ depends only on the
first-order structure at a point, we see that we can consider the
contribution of $x_{\tau}$ with $y$ fixed and the contribution of
$y_{\tau}$ with $x$ fixed separately (the ``complete'' martingale part
is just given by the sum of these two contributions). We have already
seen that when $x$ is fixed, the arc $a_i$ is also fixed, and the
contribution coming from the evolution of $y_{\tau}$ is $-b\cos\phi
\,dW^1_{\tau} -b\sin\phi \,dW^2_{\tau} $.

%
\begin{figure}[t]

\includegraphics{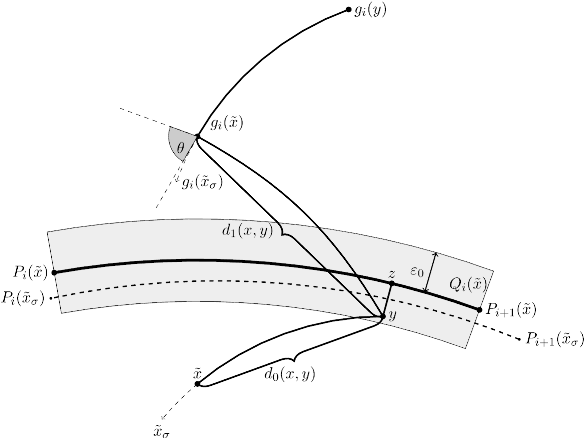}

\caption{Here, $\tilde{x}$ moves away from $y$ at unit speed along
the dashed line, motion which we parametrize as $\tilde{x}_{\sigma}$.
The distance $d_{1}(\tilde{x},y)$ is obtained as the distance between
$g_{i}(\tilde{x})$ and $y$ (on the universal cover), and as $\tilde
{x}$ moves, $g_{i}(\tilde{x})$ moves along the dashed line as
$g_{i}(\tilde{x}_{\sigma})$. The derivative of $d_{1}(x,y)$ is thus
given by $\cos(\theta)$, where $\theta$ is the angle between the
arcs $g_{i}(\tilde{x})g_{i}(y)$ and $g_{i}(\tilde{x})y$. Notice that
for small enough $\varepsilon_{0}$ and $y\in Q_{i}(\tilde{x})$ the
angle $\theta$ is positive (and we assume that $\varepsilon_{0}$
satisfies this condition). Indeed, $g_{i}(y)$ cannot be on the geodesic
$g_{i}(\tilde{x})y$ because $y$ and $g_{i}(y)$ must be some distance
apart (given by the shortest noncontractible loop on the manifold M).
The dashed line $P_{i}(\tilde{x}_{\sigma})P_{i+1}(\tilde{x}_{\sigma
})$ is the arc $a_{i}(\tilde{x}_{\sigma})$, corresponding to moving
$\tilde{x}$ to $\tilde{x}_{\sigma}$.}
\label{Fig4}
\end{figure}

The other contribution to the martingale part of $d\xi_{i,\tau}$
comes from letting $x_{\tau}$ evolve while keeping $y$ fixed [namely
the $r_{i}$ and $s_i$ terms in equation (\ref{ecxi})]. To provide a
good picture for what follows, we put all the relevant elements in
Figure~\ref{Fig4} below. The point is that when $x_{\tau}$ moves,
$a_i(x_{\tau})$ moves as well. In order to make the exposition
clearer, we will assume for the moment that $\xi_{i,\tau}\neq0$ [and
thus $\dist(y,a_i(x_{\tau}))>0$], so that $\xi_i$ is smooth in a
neighborhood of the present point. We now use $x$ to denote the
starting point of $x_{\tau}$, before we let it move to first order
(stochastically). Thus, the closest point to $y$ on $a_i(x)$, which we
denote $z$, and which we now also fix, is in the interior of $a_i$, by
the definition of $Q_i$. Let $d_0(\cdot,\cdot)$ denote the distance
between points in a neighborhood of $x$ to points in a neighborhood of
$y$ along geodesics which are close (in the exponential map) to the
minimal geodesics from $x$ to $\hat{a}_i(x)$ that lie on the same side
of $a_i(x)$ as $y$. Let $d_1(\cdot,\cdot)$ denote the similar
distance from points in a neighborhood of $x$ to a points in a
neighborhood of $y$ along minimal geodesics from $x$ to $\hat{a}_i(x)$
that lie on the opposite side of $a_i(x)$ as $y$. In other words, if we
think about the universal cover, $d_0$ corresponds to minimal geodesics
(in $\tilde{M}$) starting from a points in a neighborhood of $\tilde
{x}$, and $d_1$ corresponds to minimal geodesics starting from points
in a neighborhood of $g(\tilde{x})$, where $g$ is such that $g(\tilde
{x})$ is the point ``on the other side'' of the lift of $a_i(x)$. Both
$d_0$ and $d_1$ are smooth in both arguments. We can assume $z$ is in
both neighborhoods of $y$, so that $d_0(x,z)=d_1(x,z)$, and moreover,
$a_i(x)$ is given by the equation $d_0(x,\cdot)=d_1(x,\cdot)$ near
$z$. Also $d_0(x,y)<d_1(x,y)$, and it is this inequality which shows
``which side'' of $a_i(x)$ $y$ is on.

Now suppose $x_{\sigma}$ moves away (it moves away because of the
mirror coupling) from $y$ along the minimal geodesic connecting them,
at unit speed. (We imagine $x_{\sigma}$ moves smoothly in order to
estimate the relevant gradients, and then we use It\^o's rule to
determine the stochastic analogue.) Referring to Figure~\ref{Fig4}
and (\ref{ecxi}), our next goal is to show that $r=\frac{\partial
}{\partial\sigma}\xi_i(x_\sigma,y)\mid_{\sigma=0}\le0$. On one hand,
we have $\frac{d}{d\sigma} \,d_0(x_{\sigma},y)=1$.

Now we will invoke a similar argument to the one involved in
establishing the key fact. Namely, for a point $z$ on $a_i$, the angle
between the arc from $z$ to $g_i(\tilde{x})$ and the arc from $g_i(z)$
to $g_i(\tilde{x})$ is not zero (this is the angle $\theta$ in
Figure~\ref{Fig4}, when $y$ allowed to go to $z$). To see this, we
argue otherwise. If the angle were to be 0, then since both arcs are
geodesic, they would overlap (said differently, one arc would be a
sub-arc of the other). Further, since $d(\tilde{x},z)=d(g_i(\tilde
{x}),g_i(z))$ we would obtain that $g_i(z)=z$ (i.e., the arcs would
be identical), which is impossible since $g_i$ is an element of the
group of Deck transformations other than the identity, and thus $g_i$
does not fix any point of the universal cover (see \cite{Hatcher},
page~70). (Note that we do not rule out the possibility that
$\theta=\pi$, which can happen, but causes no trouble for the present
proof.) Since $\theta$ is not zero in the limit as $y$ approaches $z$,
the same continuity and compactness arguments as before show that there
is some neighborhood of $a_i$ on which $\theta$ is bounded below by a
positive constant.

In particular, according to the discussion above and referring to
Figure~\ref{Fig4}, for $\varepsilon_0$ small enough, the angle
$\theta$ is not $0$, thus $\frac{d}{d\sigma} \,d_1(x_{\sigma},y) =
\cos\theta\le1-\lambda$, for some small, positive $\lambda$.
Hence, $\frac{d}{d\sigma} (d_1-d_0)(x_{\sigma},y)<-\lambda<0$.
Because $d_1-d_0$ is smooth, if $y$ is close enough to $z$, we must
have that $\frac{d}{d\sigma} (d_1-d_0)(x_{\sigma},z)<0$. Further, by
compactness and continuity, we can make $\varepsilon_{0}$ small enough
so that this holds whenever $y\in Q_i\setminus a_i$. The point is that
as $x_{\sigma}$ moves away from $y$ in this way, $d_1(x_{\sigma},z)$
immediately becomes smaller than $d_0(x_{\sigma},z)$, putting $z$ on
the ``opposite side'' of $a_i(x_{\sigma})$ from $y$. Since
$a_i(x_{\sigma})$ moves smoothly, this means that it immediately
intersects the minimal geodesic from $y$ to $z$ between $y$ and $z$, or
in other words, that $\xi_i(x_{\sigma},y)$ decreases to first order,
and thus $r$ is negative. In fact, an even softer argument gives that
the distance between $y$ and $a_{i}(x_{\sigma})$ is smaller than $\xi
_{i}$, as is obvious from Figure~\ref{Fig4}, which implies $r\le0$.
As pointed out earlier, this is enough to conclude that $u_{C,i}$ in
equation (\ref{EqnLocalXiCoupled}) is bounded from below by $b\cos
\phi$, and thus is bounded from below by a positive constant depending
only on $M$, $\varepsilon_0$, and the bounds on $a$ and $b$.

%
\begin{figure}[b]

\includegraphics{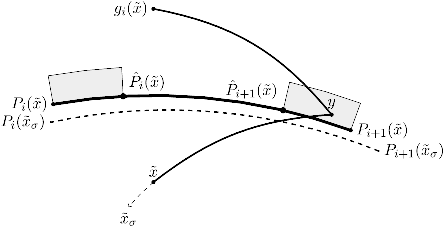}

\caption{The case when particle $y$ moves in the region $ \{
\tilde{\xi}_i<0 \}$ [corresponding to $y$ being on the opposite side
of $a_i(x)$ from $x$], which is denoted by the gray boxes. Here, we see
$a_{i}(\tilde{x}_{\sigma})$ moving away from $y$, but this still
corresponds to $\tilde{\xi}_i$ decreasing, because of our choice of sign.}
\label{Fig5}
\end{figure}

%
\begin{figure}[b]

\includegraphics{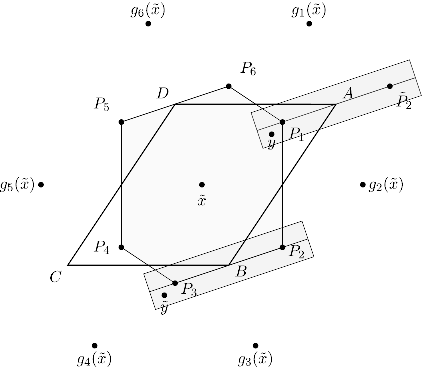}

\caption{This is the universal cover of a flat torus with the group of
Deck transformation generated by two translations (one by the vector
$DA$ and the other one by the vector $AB$). The torus is obtained by
gluing the edges of the parallelogram $ABCD$, however the fundamental
polygon is $P_1P_2P_3P_4P_5P_6$. One of the arcs we compute the
distance to is the arc $P_2P_3$ and its corresponding extension. The
point is that because of the identification of the points down on the
surface $M$, the two rectangles around $P_2P_3$ and $P_1\tilde{P}_2$
are the same. Thus, even though it seems that the point $y$ is far away
from the arc $P_2P_3$, in fact it is not due to this identification
with the point $\tilde{y}$. The point $\tilde{y}$ viewed from the
point of view of $\tilde{x}$ is ``on the other side'' of the arc
$P_2P_3$. This explains why the case in Figure~\protect\ref{Fig5}
has to be considered.}
\label{Fig6}
\end{figure}

To extend this to the case when we allow $\xi_{i,\tau} =0$, which
means $y_{\tau}\in a_i(x_{\tau})\setminus\hat{a}_i(x_{\tau})$
(because the process never runs under the mirror coupling on the cut
locus itself), let $\tilde{\xi}_i$ be the signed distance from
$y_{\tau}$ to $a_i$ in some neighborhood of $y_{\tau}$, as shown in
Figure~\ref{Fig5} [here we take our sign so that $\tilde{\xi
}_i<0$ when $y$ is on the opposite side of $a_i(x)$ from $x$]. The
reader may ask why do we have to consider this case at all. The answer
is provided in the caption of Figure~\ref{Fig6} below and it comes
from the fact that essentially the picture on the universal cover does
not reflect exactly what happens in the projection. To resume, then
$\tilde{\xi}_i$ is smooth on this neighborhood and, as noted above,
the minimal geodesic from $x_{\tau}$ to $y_{\tau}$ is evolving
continuously (so there is no problem with the definition of the mirror
coupling). Then the above arguments apply to $\tilde{\xi}_i$ as well,
by continuity. To be more precise, we can think about the analogue of
equation (\ref{ecxi}) for the signed distance~$\tilde{\xi}$. On
the region where $\tilde{\xi}_i\geq0$, the same arguments as above
apply (since here $\tilde{\xi}_i=\xi_i$, and now we can include
points with $\tilde{\xi}_i=0$ because they are now smooth points). On
the region where $\tilde{\xi}_i< 0$, the gradient of $\tilde{\xi
}_i$ is minus the gradient of $\xi_i$. Thus, the first term in
equation (\ref{ecxi}) is still $-b\cos\phi \,dW^1_{\tau}$. Now
the arc $a_{i}(\tilde{x}_{\sigma})$ is moving away from $y$, however,
because the gradient has the opposite sign, the above reasoning again
shows that $r\leq0$. This is illustrated in Figure~\ref{Fig5}, which
should make the underlying geometry clear. Thus, in taking the
quadratic variation, we still have that $(-b\cos\phi+r_{\tau})^{2}$
is bounded from below by a positive constant, which is what we wanted.
Because $\xi_i=\llvert\tilde{\xi}_i\rrvert$, we use the
It\^o--Tanaka
formula to see that, for $y_{\tau}\in Q_i(x_{\tau})$, we have
%
\begin{equation}
\label{EqnXiUnderInd} d\xi_{\tau} = u_{C} \,d\hat{W}_{\tau} +
v_{C} \,d\tau+dL_C,
\end{equation}
where $u_{C}$ and $\llvert v_{C}\rrvert$ are bounded and
$u_{C}$ is bounded from
below by a positive constant, with all of these bounds depending only
on $M$, $\varepsilon_0$, and the bounds on $a$ and $b$, and where
$L_C$ is a nondecreasing process that increases only when $\xi
_{i,\tau} =0$ (assuming the process is being run under the mirror
coupling, of course).

Now we see that the SDE satisfied by $\xi_{i,\tau}$ switches between
these two possibilities, running under independence or running under
the mirror coupling, at the stopping times $\tau^{\delta}_i$. In particular,
\begin{eqnarray*}
d\xi_{i,\tau} &=& u_i \,dW_{\tau} + v_i d
\tau+dL_i \qquad\mbox{for }y^{\delta}_{\tau}\in
Q_i\bigl(x^{\delta}_{\tau}\bigr),
\end{eqnarray*}
where
\[
u =\cases{ u_{I,i}, &\quad for $\tau\in\bigl[
\tau^{\delta}_{2n-1},\tau^{\delta
}_{2n}\bigr)$,
\vspace*{3pt}\cr
u_{C,i}, &\quad for $\tau\in\bigl[\tau^{\delta}_{2n},
\tau^{\delta}_{2n+1}\bigr)$}
\]
and
\[
v=\cases{
v_{I,i}, &\quad for $\tau\in\bigl[\tau^{\delta}_{2n-1},
\tau^{\delta
}_{2n}\bigr)$,
\vspace*{3pt}\cr
v_{C,i}, &\quad for $\tau\in\bigl[
\tau^{\delta}_{2n},\tau^{\delta}_{2n+1}\bigr)$,}
\]
and where $L$ is a nondecreasing process that increases only when $\xi
_{i,\tau}=0$.
The previously discussed bounds on $u_{I,i}$, $u_{C,i}$, $v_{I,i}$ and
$v_{C,i}$ imply that there exist positive constants $\alpha$, $\beta$
and $\gamma$, depending only on $M$, $\varepsilon_0$, and the bounds
on $a$ and~$b$, such that $\alpha\leq u_i \leq\beta$ and $\llvert
v_i\rrvert\leq
\gamma$, for any $\delta>0$ and any $i\in\{1,\ldots,K\}$. (I.e.,
these bounds hold for both $u_{I,i}$ and $u_{C,i}$ and both $v_{I,i}$
and $v_{C,i}$, and thus they hold for $u_i$ and $v_i$ regardless of
whether the process is being run under independence or under the mirror
coupling, and thus they hold independent of $\delta$. Also, because
there are only finitely many $i$, these bounds can be made independent
of $i$.)

Our final task, in this step, is to introduce a semi-martingale that
will allow us to control how the joint process transitions from having
$y_{\tau}\in S_i(x_{\tau};\varepsilon_0/2)$ to having $y_{\tau
}\notin Q_i(x_{\tau})$. Indeed, this control is the other reason for
introducing the neighborhood $Q_i(x)$. Note that $S_i(x;\varepsilon
_0/2)$ and the complement of $Q_i(x)$ are a positive distance apart,
for any $x$, so we can take $\eta_i$ to be a smooth function taking
values in $[0,1]$, such that $\eta_i$ is identically equal to 0 on
$S_i(x;\varepsilon_0/2)$ and identically equal to 1 on the complement
of $Q_i(x)$. Further, we can let $\eta_i$ vary smoothly in $x$. As
usual, we let $\eta_{i,\tau}$ be the semi-martingale arising from
composing $\eta_i$ with the process $(x_{\tau},y_{\tau})$, where the
particles can be running independently or under the mirror coupling
(and thus switching at the $\tau^{\delta}_{n}$ for any $\delta>0$).
Then, by smoothness and compactness, we see that $\eta_{i,\tau}$
satisfies the SDE
\[
d\eta_{i,\tau} = U_i \,dW_{\tau} + V_i\, d
\tau,
\]
everywhere on $M\times M$, where $U_i$ and $\llvert V_i\rrvert
$ are bounded, with
bounds depending only on $M$, $\varepsilon_0$, and the bounds on $a$
and $b$. More precisely, there are positive constants $\tilde{\beta}$
and $\tilde{\gamma}$, depending only on $M$, $\varepsilon_0$, and
the bounds on $a$ and $b$, such that $0\leq U_i \leq\tilde{\beta}$
and $\llvert V_i\rrvert\leq\tilde{\gamma}$, for any
$\delta>0$ and any $i\in\{
1,\ldots,K\}$. (Because we are dealing only with coarse bounds, it
seems unnecessary to consider the cases of independence and mirror
coupling separately, as we did for $\xi_i$.)\vspace*{6pt}

\textit{Step} 3. Here, we give the basic estimate on the amount of time
spent near each piece of the cut locus; that is, the amount of time
$y^{\delta}_{\tau}$ spends in $S_i(x^{\delta}_{\tau}; \varepsilon
)$. The argument is essentially an exercise in stochastic calculus,
which uses only the bounds on the SDEs satisfied by $\xi_{i,\tau}$
and $\eta_{i,\tau}$ that we just derived.

For $0<\varepsilon<\varepsilon_0/2$, consider the function
\[
f(x)=\cases{ x^2, &\quad for $0\leq x\leq\varepsilon$,
\vspace*{3pt}\cr
2
\varepsilon x-\varepsilon^2, &\quad for $x>\varepsilon$.}
\]
Then $f$ is $C^1$ with $\llvert f^{\prime}(x)\rrvert\leq
2\varepsilon$, and
$f^{\prime\prime}$ exists in the weak sense.

For now, we fix some $i$, and just write $\xi$ for $\xi_i$, $Q$ for
$Q_i$, etc.

We first suppose that $y_{0}\in Q(x_{0})$. Then the It\^o--Tanaka
formula shows that, at least until the first time $y_{\tau}$ exits
$Q(x_{\tau})$, $f(\xi_{\tau})$ satisfies the SDE
%
\begin{equation}
\label{ec131} \qquad df(\xi_{\tau}) = f'(\xi_{\tau})
u_\tau \,dW_{\tau} + f'(\xi_{\tau
})v_{\tau}
\,d\tau+ u_{\tau}^2 \mathbf{1}_{(-\varepsilon,\varepsilon)}(
\xi_{\tau}) \,d\tau+f'(\xi_{\tau})\,dL_{\tau}.
\end{equation}
Notice here that $\int_{0}^{\tau}f'(x_{u})\,dL_{u}$ is a nondecreasing
process due to the conditions on $L$.

Next, consider the sequence of stopping times-defined inductively as follows:
\[
\sigma_{0}=0\quad\mbox{and}\quad\zeta_{0}=\inf\bigl\{ s
\ge0\dvtx y_{s} \notin Q(x_{s}) \bigr\}
\]
and for $n\ge1$
\[
\sigma_{n}=\inf\bigl\{s\ge\zeta_{n-1}\dvtx y_s\in
S(x_s;\varepsilon_0/2) \bigr\} \quad\mbox{and}\quad
\zeta_{n}=\inf\bigl\{s\ge\sigma_{n}\dvtx y_{s}
\notin Q(x_{s}) \bigr\}.
\]
It is clear now, from the geometry of these sets, that
\[
\int_0^t \mathbf{1}_{S(x_{\tau};\varepsilon)}
(y_{\tau} )\,d\tau\leq\sum_{n\ge0}\int
_{\sigma_{n}\wedge t}^{\zeta
_{n}\wedge t}\mathbf{1}_{(-\varepsilon,\varepsilon)} (\xi
_{\tau} )\,d\tau
\]
and thus
\[
\E\biggl[\int_0^t \mathbf{1}_{S(x_{\tau};\varepsilon)}
(y_{\tau} )\,d\tau\biggr]\leq\sum_{n\ge0}\E
\biggl[\int_{\sigma_{n}\wedge t}^{\zeta_{n}\wedge t}\mathbf
{1}_{(-\varepsilon,\varepsilon)} (
\xi_{\tau} )\,d\tau\biggr].
\]

On each time interval $[\sigma_{n}\wedge t,\zeta_{n}\wedge t]$, we
use (\ref{ec131}) combined with the fact that
\[
\E\biggl[\int_{\sigma_{n}\wedge t}^{\zeta_{n}\wedge t} f'(\xi
_{\tau})u_{\tau}\,dW_{\tau} \biggr]=0
\]
and $\llvert f'(x)\rrvert\le2\varepsilon$ to first justify
that (recall that $\int
f^{\prime} \,dL_{u}$ is nondecreasing)
\begin{eqnarray*}
&& \E\biggl[\int_{\sigma_{n}\wedge t}^{\zeta_{n}\wedge t}u_{\tau
}^{2}
\mathbf{1}_{(-\varepsilon,\varepsilon)} (\xi_{\tau} )\,d\tau\biggr] + \E
\biggl[\int
_{t\wedge\sigma_{n}}^{t\wedge
\zeta_{n}} f^{\prime} (\xi_{\tau}
)v_{\tau} \,d\tau\biggr]
\\
&&\qquad \le\E\bigl[f(\xi_{t\wedge\zeta_{n}})-f(
\xi_{t\wedge\sigma_{n}})\bigr]\le2\varepsilon\varepsilon_0 \Prob(
\sigma_n<t).
\end{eqnarray*}
Complement this with the fact that $v_{\tau}f'(\xi_{\tau})\ge
-2\varepsilon\gamma$ and $u_{\tau}\ge\alpha$ to arrive at
\[
\E\biggl[\int_{\sigma_{n}\wedge t}^{\zeta_{n}\wedge t}\mathbf
{1}_{(-\varepsilon,\varepsilon)} (\xi_{\tau} )\,d\tau\biggr]\le\frac
{2\epsilon}{\alpha^{2}}\bigl(
\varepsilon_0 \Prob(\sigma_n<t)+\gamma E[t\wedge
\zeta_{n}-t\wedge\sigma_{n}]\bigr).
\]
Consequently, since $\sum_{n\ge0}(t\wedge\zeta_{n}-t\wedge\sigma
_{n})\le t$ this results in the main estimate
\[
\E\biggl[\int_0^t \mathbf{1}_{S(x_{\tau};\varepsilon)}
(y_{\tau} )\,d\tau\biggr]\le\frac{2\epsilon}{\alpha
^{2}}\bigl(\varepsilon_0
\E[D_{t}] +\gamma t\bigr),
\]
where $D_{t}$ is the number of ``downcrossings'' of $y_{\tau}$ from
the complement of $Q(x_{\tau})$ to $S(x_{\tau};\varepsilon_0/2)$,
inside the interval $[0,t]$. That is, $D_t$ is supremum of $n$ such
that $\sigma_n \leq t$.

This basic estimate leaves us with the task of getting an upper bound
on the number of downcrossings, as just described. First, note that
$\eta_{\zeta_{n}} = 1$ and $\eta_{\sigma_{n}} = 0$,
assuming these stopping times are less than or equal to $t$. Also, we
have that
\begin{eqnarray*}
\E[\eta_{t\wedge\zeta_n} - \eta_{t\wedge\sigma_n} ]&=& \E\biggl[\int
_{t\wedge\sigma_n}^{t\wedge\zeta_n} U_i \,dW_{u}
\biggr]+ \E\biggl[\int_{t\wedge\sigma_n}^{t\wedge\zeta_n} V_i
\,du \biggr]
\\
&=& \E\biggl[\int_{t\wedge\sigma_n}^{t\wedge\zeta_n} V_i \,du
\biggr]
\\
&\leq&\tilde{\gamma} \E[t\wedge\zeta_n- t\wedge\sigma_n
],
\end{eqnarray*}
where we used the boundedness of $U_i$ to see that the martingale part
is actually integrable. For any $N\geq1$, we have that
\[
\sum_{n=0}^{N} \E[\eta_{t\wedge\zeta_n} -
\eta_{t\wedge
\sigma_n} ] \leq\tilde{\gamma} \sum_{n=0}^{N}
\E[t\wedge\zeta_n - t\wedge\sigma_n ]\leq\tilde{\gamma}
t.
\]
Since we always have $0\leq\eta_{\tau}\leq1$, we let $N\rightarrow
\infty$
to see that
\[
-1+\E[D_{t} ]\leq\sum_{n=0}^{\infty}
\E[\eta_{t\wedge\zeta_n} - \eta_{t\wedge\sigma_n} ] \leq\tilde{\gamma
} t.
\]
This, in turn, implies that
\[
\E\biggl[\int_0^t \mathbf{1}_{S(x_{\tau};\varepsilon)}
(y_{\tau} )\,d\tau\biggr]\le\frac{2\varepsilon}{\alpha^{2}} \bigl
(\varepsilon_0(1+
\tilde{\gamma}t)+\gamma t\bigr).
\]
In the case, we start with $y_{0}\notin Q(x_{0})$, we run the process
until it hits $S(x_{\tau};\varepsilon_0/2)$, and once this happens
use the same argument as above.\vspace*{6pt}

\textit{Step} 4. From here, the proof is easy to complete. We just put
everything together.

For any $0<\varepsilon<\varepsilon_0/2$, the expected amount of time
on the interval $[0,t]$ that $y^{\delta}_{\tau}$ spends within
distance $\varepsilon$ of $\Cut(x^{\delta}_{\tau})$ satisfies
\begin{eqnarray*}
\E\biggl[\int_0^t \mathbf{1}_{\{\dist(\cdot,\Cut(x^{\delta
}_{\tau}))\leq\varepsilon\}}
\bigl(y^{\delta}_{\tau} \bigr)\,d\tau\biggr]&\leq&\sum
_{i=1}^K \E\biggl[\int_0^t
\mathbf{1}_{S_i(x^{\delta}_{\tau
};\varepsilon)} \bigl(y^{\delta}_{\tau} \bigr)\,d\tau
\biggr]
\\
&\leq& K \frac{2\varepsilon}{\alpha^{2}} \bigl(\varepsilon
_0(1+\tilde{\gamma
}t)+\gamma t\bigr) =C\varepsilon,
\end{eqnarray*}
where $C>0$ (defined by the above equality) is a constant depending
only on $t$, $M$, $\varepsilon_0$, and the bounds on $a$ and $b$ (in
particular, $C$ does not depend on $\delta$). As noted just before
step~1, this is exactly the estimate we need to complete the proof.
\end{pf}

\section{Convergence of first order to constant curvature in the case \texorpdfstring{$\chi(M)=0$}{chi(M)=0}}\label{s6}

Now that we have our uniqueness/verification theorem and the general
coupling procedure, we begin exploring some of the consequences. As
usual, for simplicity, we assume that we have a smooth solution
$\bar{p}_t$ for all time $t\ge0$ on the manifold $M$. We take
here a flat
metric $h$, which is possible under the assumption that \mbox{$\chi(M)=0$}.

The main result of this section is the following.

%
\begin{teo}\label{tt1}
For $M$, $h$, and $\bar{p}_{0}$ as above, suppose that we have a
smooth solution $\bar{p}_t$ to equation (\ref{EqnNRFp}) for all
$t\in
[0,\infty)$. Then there exist constants, $c,C>0$ which depend only on
the metrics $g_{0}$ and $h$ such that
%
\begin{equation}
\label{etc} \sup_{x\in M}\bigl\llvert\bar{p}_{t}(x)
\bigr\rrvert\le ce^{-Ct}.
\end{equation}
\end{teo}

\begin{pf}
Fix a time $t>0$, a time $s\in[0,t)$ and a point $x\in M$ so that the
Ricci flow has a solution on $[0,t]$. The first thing to notice is that
$p_{\tau}=\bar{p}_{t-\tau}(x_{\tau})$ is a martingale. Thus, we
have the following stochastic representation:
%
\begin{equation}
\label{EqnPAverage} \bar{p}_t(x)= \E\bigl[\bar{p}_{t-\sigma}(x_{\sigma})
\bigr]
\end{equation}
valid for any stopping time $\sigma$ with $0\le\sigma\le t$. In
particular, setting $\sigma=t$ shows that $\bar{p}_t(x)$ is a weighted
average of the values of $\bar{p}_{0}$. Thus,
%
\begin{equation}
\label{EqnInequalities} \min_M \bar{p}_{0} \leq\min
_M \bar{p}_t \leq\max
_M \bar{p}_t \leq\max
_M \bar{p}_{0}
\end{equation}
for any $t$.
The main idea for getting (\ref{etc}) is to prove that for some $c,C>0$,
%
\begin{equation}
\label{etc2} \osc\bar{p}_{t}\le c e^{-Ct}.
\end{equation}
Indeed, if this is true, then combining this with the fact that the
integral of $e^{2\bar{p}_{t}}$ with respect to the volume induced by
$h$ is $1$, we deduce that there is at least one point $\tilde{x}$ for
which $\bar{p}_{t}(\tilde{x})=0$ and from here it is clear that we
get (\ref{etc}).

We now choose any two starting points $x$ and $y$ for the processes
$x_{\tau}$ and $y_{\tau}$. Over each of these points, there is
exactly one point [$\bar{p}_{t}(x)$ and $\bar{p}_{t}(y)$]
in the fiber
which is in the reachable set $\Gamma_{t}$. We wish to run the
controlled process starting from both $(x,\bar{p}_t(x))$ and
$(y,\bar{p}_t(y))$, and couple them so that they meet as quickly
as possible.
Our reachable sets have the semi-group property, that is, the process
$(x_{\tau},p_{\tau})$ at time $\tau\in[0,t]$ is on $\Gamma_{t-\tau
}$, and since we know that we have a solution until time $t$, we know
that after running the controlled processes for time $\tau\leq t$ they
will be on the solution section corresponding to the Ricci flow at time
$t-\tau$. This means that if the particles couple on $M$, they couple
in the total space as well, that is, $x_{\tau}=y_{\tau}$ implies that
$\bar{p}_{t-\tau}(x_{\tau})=\bar{p}_{t-\tau}(y_{\tau})$
as well.

In light of this, if $\sigma$ is the coupling time of $x_{\sigma}$
and $y_{\sigma}$, the martingale property gives that
%
\begin{eqnarray}
\label{etc3} \bar{p}_{t}(x)-\bar{p}_{t}(y)&=&\E\bigl[
\bar{p}_{t-\sigma\wedge
s}(x_{\sigma\wedge s})\bigr]-\E\bigl[\bar{p}_{t-\sigma\wedge
s}(y_{\sigma
\wedge s})
\bigr]\nonumber
\\
&=& \E\bigl[\bar{p}_{t-\sigma}(x_{\sigma})-\bar{p}_{t-\sigma
}(y_{\sigma}),
\sigma\le s\bigr]
\nonumber\\[-8pt]\\[-8pt]\nonumber
&&{} +\E\bigl[\bar{p}_{t-s}(x_{s})-
\bar{p}_{t-
s}(y_{s}),s<\sigma\bigr]
\\
&=&\E\bigl[\bar{p}_{t-s}(x_{s})-\bar{p}_{t-s}(y_{s}),s<
\sigma\bigr].\nonumber
\end{eqnarray}
The outcome of this is that
%
\begin{equation}
\label{etc4} \osc\bar{p}_{t}\le\Prob(s<\sigma)\osc
\bar{p}_{t-s}.
\end{equation}
What remains to be controlled here is $\Prob(s<\sigma)$. While the
above is true for any coupling of $x_{\tau}$ and $y_{\tau}$, we wish
to use the mirror coupling, as was introduced in the previous section.
The main property of this coupling, for us, is contained in (\ref{ec1}) which gives the equation satisfied by the distance function
$\rho_{\tau}=d(x_{\tau},y_{\tau})$, namely
%
\begin{equation}
\label{etc5} \,d\rho_{\tau} = (a+b) \,d\hat{W}_{\tau} +
\frac{1}{2\rho_{\tau
}}(a-b)^2 \,d\tau-L_{\tau}
\end{equation}
with $a_{\tau}=e^{-\bar{p}_{t-\tau}(x_{\tau})}$, $b_{\tau
}=e^{-\bar{p}_{t-\tau}(y_{\tau})}$ and $\hat{W}$ being a
one-dimensional Brownian motion on the time interval $[0,t]$. Obviously,
the time $\tau$ runs up to $\sigma$ (the hitting time of 0) or $t$,
whichever comes first and the term $L_{\tau}$ is nonnegative. We are
interested in estimating the probability this hitting time $\sigma$
occurs after time $s$. To this end, the first thing which will be used
here is the fact that from (\ref{EqnInequalities}) we know that $a$
and $b$ are all bounded from above as well from below. So we have two
constants $A,B>0$ which are depending only on $\bar{p}_{0}$, or
otherwise the starting metric $g_{0}$, with the property that
%
\begin{equation}
\label{etc6} A \le a,\qquad b\le B.
\end{equation}

To move on, we let
\[
\lambda(u)=\int_{0}^{u}\frac{1}{(a_{v}+b_{v})^{2}}\,dv
\]
be the time-change making the martingale part of $\rho_{\tau}$ from
(\ref{etc5}) into a Brownian motion. Then with the notation $\tilde
{\rho}_{u}=\rho_{\lambda(u)}$,
%
\begin{equation}
\label{etc7} d\tilde{\rho}_{u} = d\tilde{W}_{u} +
\frac{1}{2\tilde{\rho
}_{u}}\frac{(a-b)^2}{(a+b)^2} \,d u -d\tilde{L}_{u},
\end{equation}
where $a$ and $b$ are evaluated at time $\lambda(u)$ and the above
equation is valid for $u\in[0,t\wedge\lambda^{-1}(t))$, where
$\lambda^{-1}(t)$ is the first value of $u$ corresponding to $\lambda
(u)=t$. Obviously, $c u\le\lambda(u)\le Cu$ for some constants
$c,C>0$ and also because of (\ref{etc6}),
\[
\biggl\llvert\frac{a-b}{a+b}\biggr\rrvert\le\frac
{B-A}{B+A}=1-
\epsilon<1.
\]
Ignoring the $L$ term in (\ref{etc7}) and then using standard
comparison for ordinary stochastic differential equations, we learn
that the process $\tilde{\rho}$ is bounded above by a Bessel process
of dimension $\delta<2$ and starting at some value $\tilde{\rho
}_{0}$ bounded by the diameter (with respect to the metric $h$) of the
manifold $M$. Thus, invoking \cite{JY}, equation (15), which gives the
distribution of the hitting time $\tilde{\sigma}$ of $0$ for a Bessel
process of dimension $\delta<2$ starting at $\tilde{\rho}_{0}$, we obtain
\[
\Prob(s<\tilde{\sigma})=\frac{1}{\Gamma(1-\delta/2)}\int_{0}^{\tilde
{\rho}^{2}_{0}/(2s)}y^{-\delta/2}e^{-y}\,dy.
\]
Finally, since $cu\le\lambda(u)\le Cu$ and the diameter of the
manifold $M$ is finite, we arrive at
\[
\Prob(s<\sigma)\le\frac{1}{\Gamma(1-\delta/2)}\int_{0}^{D/s}y^{-\delta
/2}e^{-y}\,dy=:
\Lambda(s),
\]
where $D$ is a constant which depends only on the initial metric
$g_{0}$ and some geometry of the underlying metric $h$ (more precisely
the diameter of $M$ with respect to~$h$). Hence, it turns out that the
function $\Lambda$ is determined by the metrics $h$ and~$g_{0}$.

To summarize, from (\ref{etc4}) and the preceding we now have that
\[
\osc\bar{p}_{t}\le\Lambda(s) \osc\bar{p}_{t-s}.
\]
Using this, it is easy to get (\ref{etc2}) as follows. For $t\in
[0,1]$, we know from (\ref{EqnInequalities}), that $\osc\bar{p}_{t}\le
\osc\bar{p}_{0}$. Now for each $t\in[n,n+1]$, $n\ge1$,
using repeatedly the above inequality, we arrive at
\[
\osc\bar{p}_{t}\le\Lambda(1)^{n}\osc\bar{p}_{t-n}
\le\Lambda(1)^{t}\osc\bar{p}_{0}/\Lambda(1)
\]
which is exactly the exponential decay of (\ref{etc2}) since
$0<\Lambda(1)<1$.
\end{pf}

%
\begin{remark}
It is interesting to point out that we can prove the same exponential
decay as in Theorem~\ref{tt1} for the case of $\chi(M)<0$ using the
coupling argument. This decay is, however, already taken care of by the
a priori estimates of Corollary~\ref{c22}. Nonetheless, this coupling
argument is the one we will employ for the gradient estimates in the
following section.
\end{remark}

\section{Estimates on the gradient decay of the normalized Ricci flow in the case \texorpdfstring{$\chi(M)\le0$}{$chi(M)<=0$}}\label{s7}

We continue under the same assumptions that $M$ is a compact surface
with reference metric $h$ of constant curvature $0$ or $-$1 (so $M$ has
nonpositive Euler characteristic by the Gauss--Bonnet theorem) and
$g_0$ is a smooth initial metric in the same conformal class and with
the same area as $h$, so that the normalized Ricci flow has a smooth
solution for all time which is given by $\bar{p}_t$. Now, $\bar{p}_t$
converges in the $C^0$-norm exponentially fast to 0 as shown in
Corollary~\ref{c22} for the case $\chi(M)<0$ and Theorem~\ref{tt1}
for the case \mbox{$\chi(M)=0$}. So we have that for some constants~\mbox{$c,C>0$,}
%
\begin{equation}
\sup_{x\in M}\bigl\llvert\bar{p}_{t}(x)\bigr\rrvert
\le ce^{-Ct}.
\end{equation}

Let
\[
G(t)=\sup_{x\in M}\bigl\llvert\nabla\bar{p}_{t}(x)
\bigr\rrvert.
\]
The idea is to start with
\[
\bigl\langle\nabla\bar{p}_{t}(x),\xi\bigr\rangle=\lim
_{h\to0}\frac{\bar{p}_{t}(\gamma_{h}(x))-\bar{p}_{t}(x)}{h},
\]
where $\xi$ is a unit vector in the tangent space at $x$ and $\gamma
_{t}(x)$ is any curve started at $x$ with initial speed $\xi$. Then we
use the coupling to estimate $\bar{p}_{t}(x)-\bar{p}_{t}(y)$ for $x$
and $y$ close to one another. Due to the nonlinearity of the flow, the
estimates coming from the above will still contain the gradient bounds,
but in the end, letting $x$ and $y$ come close to one another leads to
a functional inequality on $G(t)$, from which we are able to derive the
desired estimate.

%
\begin{teo}\label{t7}
If $\chi(M)\le0$ then $G(t)$ goes to 0 exponentially fast. As a
consequence, $\bar{p}_t$ converges to 0 exponentially fast in $C^1$.
\end{teo}

\begin{pf}
Pick two sufficiently close points $x,y\in M$ and some $t>0$, and let
$\rho_{\OldSigma}=d(x_{\OldSigma},y_{\OldSigma})$ for $0\le
\OldSigma\le t$ be the distance (measured with respect to the time
independent metric $h$) between the processes $x_{\OldSigma}$ and
$y_{\OldSigma}$ started at $x$ and $y$, respectively. We are going to
use mirror coupling for the processes $x_{\cdot}$ and $y_{\cdot}$.
Recall that the coupling equations satisfied by $(x_{\OldSigma
},p_{\OldSigma})$ and $(y_{\OldSigma},q_{\OldSigma})$ are given by
%
\begin{eqnarray}
\label{ge6} \,dx_{\OldSigma} &=& e^{-p_{\OldSigma}} \Biggl[ \Biggl[\sum
_{i=1}^2 \mathfrak{e}_{i}(x_{\OldSigma})
\sqrt{2} \circ dW_{\OldSigma}^i \Biggr] \Biggr],\nonumber
\\
dy_{\OldSigma} &=& e^{-q_{\OldSigma}} \Biggl[ \Biggl[\sum
_{i=1}^2 \mathfrak{e}_{i}(y_{\OldSigma})
\sqrt{2} \circ d\tilde{W}_{\OldSigma
}^i \Biggr] \Biggr],
\nonumber\\[-8pt]\\[-8pt]\nonumber
dp_{\OldSigma} &=& e^{-p_{\OldSigma}} \Biggl[\sum_{i=1}^2
a_i \sqrt{2} \,dW_{\OldSigma}^i \Biggr] + r
\bigl(e^{-2p_{\OldSigma}}-1\bigr) \,d\OldSigma,
\\
dq_{\OldSigma} &=& e^{-q_{\OldSigma}} \Biggl[\sum_{i=1}^2
a'_i \sqrt{2} \,d\tilde{W}_{\OldSigma}^i
\Biggr] + r\bigl(e^{-2q_{\OldSigma}}-1\bigr) \,d\OldSigma,\nonumber
\end{eqnarray}
where $r=0$ or $-1$ and $\tilde{W}$ is the Brownian motion given by
the mirror coupling.

We consider $\OldTau$, the coupling time of $x_{\cdot}$ and $y_{\cdot
}$. From the fact that $p_{\OldSigma}+r\int_{0}^{\OldSigma
}(1-e^{-2p_{u}})\,du$ is a martingale and $p_{\OldSigma}=\bar
{p}_{t-\OldSigma}(x_{\OldSigma})$, we write
%
\begin{equation}
\label{ge1} \bar{p}_{t}(x)-\bar{p}_{t}(y)=
\E[p_{t\wedge\OldSigma}-q_{t\wedge
\OldSigma}]-r\E\biggl[\int_{0}^{t\wedge\OldSigma
}
\bigl(e^{-2p_u}-e^{-2q_{u}}\bigr)\,du \biggr]
\end{equation}
for any stopping time $\OldSigma$. The useful estimates we are
interested in are estimates from above of $\bar{p}_{t}(x)-\bar
{p}_{t}(y)$, and this is good if we assume that $\bar{p}_{t}(x)-\bar
{p}_{t}(y)>0$. This is always possible unless $\bar{p}_{t}$ is
constant in which case the gradient is $0$, so there is nothing to
prove then. Thus, assume that $\bar{p}_{t}(x)-\bar{p}_{t}(y)>0$ for
some points $x$ and $y$ (which is the same as $p_{0}>q_{0}$) and take
$\alpha$ to be the first time $u$ for which $p_{u}=q_{u}$. With this
choice of the stopping time, for any $u\in[0,\alpha]$ we know that
$p_{u}\ge q_{u}$, which thus means $e^{-2p_{u}}-e^{-2q_{u}}\le0$. This
combined with the fact that $r\le0$ and the exponential decay of $\bar
{p}_{t}$, implies that for any $s\in[0,t\wedge1]$,
\[
\bar{p}_{t}(x)-\bar{p}_{t}(y)\le\E[p_{ \alpha}-q_{\alpha},
\alpha\le s]+\E[p_{s}-q_{s},s<\alpha]\le
ce^{-Ct}\Prob(s<\alpha).
\]

The point is that if $\OldTau$ is the first coupling time, of the
processes $x_{\cdot}$ and $y_{\cdot}$, it is obvious that $\alpha\le
\OldTau$, and thus
\[
\Prob(s<\alpha)\le\Prob(s<\OldTau)\qquad\mbox{for any }s\in[0,t\wedge1],
\]
which in turn yields
%
\begin{equation}
\label{ge9} \bar{p}_{t}(x)-\bar{p}_{t}(y)\le
ce^{-Ct}\Prob(s<\OldTau)\qquad\mbox{for any }s\in[0,t\wedge1].
\end{equation}
With this equation our next task becomes the estimate of $\Prob
(s<\OldTau)$.

From Theorem~\ref{thmc}, we learn that the distance process $\rho
_{\OldSigma}$ satisfies
%
\begin{equation}
\label{ge7} \,d\rho_{\OldSigma}\le\bigl(e^{-p_{\OldSigma
}}+e^{-q_{\OldSigma
}}
\bigr)\,dB_{t}+\frac{(e^{-p_{\OldSigma}}-e^{-q_{\OldSigma}})^{2}}{2\rho
_{\OldSigma}}\,d\OldSigma
\end{equation}
in the case $r=0$ and
%
\begin{eqnarray}
\label{ge8}
d\rho_{\OldSigma}&\le&\bigl(e^{-p_{\OldSigma
}}+e^{-q_{\OldSigma
}}
\bigr)\,dB_{\OldSigma}
\nonumber\\[-8pt]\\[-8pt]\nonumber
&&{} + \frac{1}{2} \biggl[ \bigl(e^{-p_{\OldSigma
}}-e^{-q_{\OldSigma}}
\bigr)^2 \coth\rho_{\OldSigma} +2e^{-p_{\OldSigma}-q_{\OldSigma}}\tanh
\frac{\rho_{\OldSigma}}{2} \biggr]\,d\OldSigma
\end{eqnarray}
in the case $r=-1$. Here, $B_{t}$ is a one-dimensional Brownian motion
run in the time interval $[0,t]$.

So far, we have used this strategy of coupling in the proof of
Theorem~\ref{tt1}, in which, due to the singularity in the drift of
the equations (\ref{ge7}) and (\ref{ge8}), we compared the distance
function $\rho_{\tau}$ with a Bessel process. For the gradient
estimates, we are going to remove the singularity based on the
observation that
\[
p_{\OldSigma}=\bar{p}_{t-\OldSigma}(x_{\OldSigma})\quad\mbox{and
similarly}\quad q_{\OldSigma}=\bar{p}_{t-\OldSigma}(y_{\OldSigma}).
\]
The upshot of this is that the term $e^{-p_{\tau}}-e^{-q_{\tau}}$ is
in fact of order $\rho_{\tau}$. More precisely, due to the
boundedness of $\bar{p}$,
\begin{eqnarray*}
\bigl\llvert e^{-p_{\OldSigma}}-e^{-q_{\OldSigma}}\bigr\rrvert&=&\bigl
\llvert
e^{-\bar{p}_{t-\OldSigma
}(x_{\OldSigma})}-e^{-\bar{p}_{t-\OldSigma}(y_{\OldSigma})}\bigr\rrvert
\le C\,d(x_{\OldSigma},y_{\OldSigma})
\sup_{x\in M}\bigl\llvert\nabla\bar{p}_{t-\OldSigma}(x)\bigr
\rrvert
\\
&=&C G(t-\OldSigma)\rho_{\OldSigma}.
\end{eqnarray*}

Since $\rho_{\OldSigma}\le D$, where $D$ is the diameter of $M$, it
is straightforward to show that either (\ref{ge7}) or (\ref{ge8}) implies
\[
d\rho_{\OldSigma}\le\bigl(e^{-p_{\OldSigma}}+e^{-q_{\OldSigma
}}
\bigr)\,dB_{\OldSigma}+C\bigl(1+G^{2}(t-\OldSigma)\bigr)
\rho_{\OldSigma}\,d\OldSigma.
\]
To go further from here, consider $\tilde{\rho}_{\OldSigma}$ the
solution to
\[
d\tilde{\rho}_{\OldSigma}= \bigl(e^{-p_{\OldSigma}}+e^{-q_{\OldSigma
}}
\bigr)\,dB_{\OldSigma}+C\bigl(1+G^{2}(t-\OldSigma)\bigr)\tilde{
\rho}_{\OldSigma
}\,d\OldSigma,
\]
with the same initial condition $\rho_{0}=d(x,y)$ as $\rho_{\OldSigma
}$. Standard arguments (in fact a simple application of Gronwall's
lemma) give that
\[
\rho_{\OldSigma}\le\tilde{\rho}_{\OldSigma}
\]
which results in the fact that the first hitting time of $0$ for $\rho
$ is less then or equal to the first hitting time of $0$ for $\tilde
{\rho}$. Now if $\tilde{\OldTau}$ denotes the hitting time of $0$
for the process $\tilde{\rho}_{t}$
%
\begin{equation}
\label{ge10} \Prob(s<\OldTau)\le\Prob(s<\tilde{\OldTau})\qquad\mbox{for
all }s
\in[0,t\wedge1].
\end{equation}

Therefore, the task now is to estimate the latter, and to do this we
solve for $\tilde{\rho}$ as
\[
\tilde{\rho}_{\OldSigma}= \biggl(\rho_{0}+\int
_{0}^{\OldSigma} \bigl(e^{-p_{v}}+e^{-q_{v}}
\bigr)e^{-\int_{0}^{v}f(z)\,dz}\,dB_{v} \biggr)e^{\int
_{0}^{\OldSigma}f(z)\,dz}
\]
with the notation $f(\OldSigma)=C(1+G^{2}(t-\OldSigma))$, for $0\le
\OldSigma\le t$. Consequently, the first hitting time of $0$ for
$\tilde{\rho}$ is the first hitting time of $-\rho_{0}$ for the
time-changed Brownian motion $\int_{0}^{\OldSigma}
(e^{-p_{v}}+e^{-q_{v}})e^{-\int_{0}^{v}f(z)\,dz}\,dB_{v}$. In law, this is
the same as the first hitting time of $-\rho_{0}$ of $B_{c(\OldSigma
)}$, with the time change
\[
c(\OldSigma)=\int_{0}^{\OldSigma
}\bigl(e^{-p_{v}}+e^{-q_{v}}
\bigr)^{2}e^{-2\int_{0}^{v}f(z)\,dz}\,dv.
\]
Once again using the boundedness of $\bar{p}$, we can find a constant
$C>0$ such that
\[
c(\OldSigma)\ge\tilde{c}(\OldSigma):=C\int_{0}^{\OldSigma
}e^{-2\int_{0}^{v}f(z)\,dz}\,dv
\qquad\mbox{for }\tau\in[0,t].
\]

Now, if $\OldTau_{-\rho_{0}}$ is the first hitting time of $-\rho
_{0}$ for the Brownian motion, then the hitting time of $-\rho_{0}$
for $B_{c(\OldSigma)}$ is given by $c^{-1}(\OldTau_{-\rho_{0}}\wedge
c(t))$. This combined with (\ref{ge10}) yields that
%
\begin{eqnarray}\label{ge30}
\Prob(s<t\wedge\tilde{\OldTau}) &=&\Prob\bigl(s< c^{-1}
\bigl(\OldTau_{-\rho
_{0}}\wedge c(t)\bigr)\bigr)\le\Prob\bigl(c(s)\le
\OldTau_{-\rho_{0}}\bigr)
\nonumber\\[-8pt]\\[-8pt]\nonumber
&\le&\Prob\bigl(\tilde{c}(s)<\OldTau_{-\rho_{0}}
\bigr).
\end{eqnarray}

The distribution of $\OldTau_{-\rho_{0}}$ is actually well understood
(see, e.g., the remark after \cite{RevYor}, Proposition 3.7 of
Chapter~II), and its density is given by $\frac{\rho_{0}}{\sqrt
{2\pi x^{3}}}e^{-\rho_{0}^{2}/(2x)}$ on the positive axis, which
results with
\[
\Prob\bigl(\tilde{c}(s)<\OldTau_{-\rho_{0}}\bigr)=\int_{\tilde
{c}(s)}^{\infty}
\frac{\rho_{0}}{\sqrt{2\pi x^{3}}}e^{-\rho
_{0}^{2}/(2x)}\,dx=\frac{2}{\sqrt{2\pi}}\int_{0}^{\sfrac{\rho
_{0}}{\sqrt{\tilde{c}(s)}}}e^{-\OldSigma^{2}/2}\,d
\OldSigma.
\]

Going back to (\ref{ge9}) and using the preceding, we conclude that
for $s\in[0,t]$,
\[
\bar{p}_{t}(x)-\bar{p}_{t}(y)\le ce^{-C(t-s)}\int
_{0}^{\sfrac{\rho
_{0}}{\sqrt{\tilde{c}(s)}}}e^{-\OldSigma^{2}/2}\,d\OldSigma,
\]
from which, using the fact that $d(x,y)=\rho_{0}$ and letting $\rho
_{0}$ go to $0$, we fairly easily deduce that
\[
G(t)\le c\frac{e^{-Ct}}{\sqrt{\tilde{c}(s)}},
\]
which we rearrange as
\begin{eqnarray}
A(t)\int_{0}^{s}e^{-\int_{0}^{\OldSigma}A(t-u)\,du}\,d\OldSigma\le
ce^{-Ct}
\nonumber
\\
\eqntext{\mbox{for all }s\in[0,t\wedge1]\mbox{ with } A(
\OldSigma)=CG^{2}(\OldSigma).}
\end{eqnarray}

From here, the exponential decay of $A(t)$ is taken care of by the
following lemma.
\end{pf}

%
\begin{Lemma}\label{gel1} Suppose $A\dvtx[0,\infty)\to[0,\infty)$ is a
continuous function with the property that for some constants $c,C>0$,
%
\begin{equation}
\label{ge12} A(t)\int_{0}^{s}e^{-\int_{0}^{\OldSigma}A(t-u)\,du}\,d
\OldSigma\le ce^{-Ct}\qquad\mbox{for all }s\in[0,t\wedge1].
\end{equation}
Then there are constants $k,K>0$ such that
\[
A(t)\le Ke^{-kt}\qquad\mbox{for all }t>0.
\]
\end{Lemma}

\begin{pf}
For each $n\ge1$, let
\[
m_{n}=\sup_{t\in[n,n+1]}A(t)\quad\mbox{and}\quad M_{n}=\sup _{t\in[n-1,n+1]}A(t).
\]
Notice that the exponential decay we are
looking for is actually equivalent to $m_{n}\le K e^{-k n}$ for large
enough $n$.

Now, for $t\in[n,n+1]$ and $s\in[0,1]$, we have $t-s\in[n-1,n+1]$
and, therefore, $-A(t-u)\ge-M_{n}$, which combined with (\ref{ge12})
yields, for $t$ near the supremum of $A(t)$ on $[n,n+1]$, and
eventually another constant $c>0$
\[
m_{n}\int_{0}^{1}e^{-\OldSigma M_{n}}\,d
\OldSigma= m_{n}\frac
{1-e^{-M_{n}}}{M_{n}}\le e^{-cn}\qquad\mbox{for
all large }n,
\]
which in turn gives
{\renewcommand{\theequation}{**}
\begin{equation}\label{eq**}
m_{n}\le\frac{M_{n}}{1-e^{-M_{n}}}e^{-cn}.
\end{equation}}\setcounter{equation}{51}%
Now, for each particular $n$, we have one of the following two alternatives:
\begin{longlist}[(2)]
\item[(1)]$M_{n}\le e^{-cn/2}$, in which case it is clear that
{\renewcommand{\theequation}{$\#$}
\begin{equation}\label{eqt}
m_{n}\le e^{-cn/2}.
\end{equation}}\setcounter{equation}{51}%
\item[(2)]$M_{n}>e^{-cn/2}$, and\vspace*{1pt} in this case $1-e^{-M_{n}}>
1-e^{-e^{-cn/2}}>\frac{1}{2}e^{-nc/2}$ for large enough $n$, say $n\ge
n_{0}$. From (\ref{eq**}), it follows that $m_{n}\le2M_{n}e^{-cn/2}\le M_{n}
e^{-c/2} $ for all $n$ large enough, say $n\ge n_{1}$. This inequality
implies that
{\renewcommand{\theequation}{$\#\#$}
\begin{equation}\label{eqtt}
m_{n}\le m_{n-1}e^{-c/2}
\qquad\mbox{for all }n\ge n_{1}.
\end{equation}}\setcounter{equation}{51}%
Indeed if the supremum of $A(t)$ on the interval $[n-1,n+1]$ is the
same as the supremum on $[n,n+1]$, then $M_{n}=m_{n}$ and this in turn
implies $M_{n}=0$, in particular we trivially have (\ref{eqtt}). If the
supremum of $A(t)$ on $[n-1,n+1]$ is the same as the supremum on
$[n-1,n]$, this gives $M_{n}=m_{n-1}$ and then (\ref{eq**}) gives~(\ref{eqtt}).
\end{longlist}

Using these two alternatives we argue as follows. Assume that there is
a large enough $n_{2}$ such that $m_{n_{2}}\le e^{-cn_{2}/2}$. Then an
easy induction using the two alternatives above give that $m_{n}\le
e^{-cn/2}$ for all $n\ge n_{2}$. If there is no such $n_{2}$, this
means that for all $n\ge n_{1}$ we clearly have the second alternative
and in this case $m_{n}\le m_{n_{1}}e^{-(n-n_{1})c/2}$. In both cases,
we obtain the exponential decay we were looking for.

An alternative proof can be given as follows. Take a sufficiently large
constant $K>0$, which will be chosen later. Now we look at
$B(t)=A(t)e^{kt}$. Assume there is a time $t\ge K$ such that $B(t)=\max
_{\tau\in[0,t]}B(\tau)$. We then have $A(\tau)\le A(t)e^{-k(\tau
-t)}$ for $\tau\in[0,t]$ and from (\ref{ge12}) with $s=1$,
\[
A(t)\int_{0}^{1}e^{-\tau A(t)e^{k} \,du}\,d\tau\le
ce^{-C t},
\]
and from this
\[
1-e^{-A(t)e^{k}}\le ce^{k}e^{-Ct},
\]
which gives that
\[
A(t)\le-e^{-k}\log\bigl( 1-ce^{k}e^{-Ct}\bigr).
\]
If we choose the constant $K$ large enough and $k$ small enough, so
that $1/2<1-ce^{k}e^{-CK}$, then we arrive at
\[
A(t)\le ce^{-Ct}\le Ce^{-kt},
\]
where we again have to take $K$ large enough to ensure this. In
particular, this means that $A(t)e^{kt}\le C$. As this $B(t)$ is the
maximum of $B(\tau)$ over $\tau\in[0,t]$, we get that $A(\tau)\le Ce^{-kt}$.

The other alternative which remains is that there is no $t\ge K$ for
which $B(t)$ attains a maximum on $[0,t]$ for $t\ge K$. In this case,
we deduce that $\sup_{t\ge0}B(t)=\sup_{t\in[0,K]}B(t)$ and the
exponential decay follows again.
\end{pf}

Before we close this section, let us point out that the exponential
decay of the gradient has the following consequence that we will use
later on for the estimates of the higher order derivatives.

%
\begin{Cor} Under the same assumptions as in Theorem~\ref{t7},
%
\begin{equation}
\label{ge20} \Prob(s<\OldTau)\le C\frac{\rho_{0}}{\sqrt{s}}\qquad\mbox
{for }s\in(0,t].
\end{equation}
\end{Cor}

\begin{pf} This follows by combining (\ref{ge10}), (\ref{ge30}) and
the fact that $\tilde{c}(s)/s$ is bounded (due to the gradient estimate).
\end{pf}

\section{Triple coupling}\label{s8}
\subsection{Basic idea}

We have just used coupling to prove the exponential convergence of
$\bar{p}$ to 0 in the $C^{1}$-topology. The next step in our analysis
is\vadjust{\goodbreak} the estimate of the decay of the Hessian of $\bar{p}$, which, from
the Ricci flow equation, implies the convergence of the curvature to a
constant. The basic idea starts with writing
\[
\bigl\langle\operatorname{Hess} \bar{p}_{t}(z)\xi,\xi\bigr\rangle=\lim
_{\rho
_{0}\to0}\frac{\bar{p}_{t}(\gamma(-\rho_{0}))-2\bar{p}_{t}(z)+\bar
{p}_{t}(\gamma(\rho_{0}))}{\rho_{0}^{2}},
\]
where $\xi$ is a unit vector at $z$, and $\gamma$ is a geodesic
running at unit speed started (at $t=0$) at $z$ with velocity $\xi$.
Now we are concerned with three points, $x=\gamma(-\rho_{0})$,
$y=\gamma(\rho_{0})$, and the middle point $z$. As in the gradient
estimate case, we want to write $\bar{p}(x)$, $\bar{p}(y)$ and $\bar
{p}(z)$ as integrals of some functions of the associated Brownian
motions and then use probabilistic estimates to find bounds for $\bar
{p}_{t}(\gamma(-\rho_{0}))-2\bar{p}_{t}(z)+\bar{p}_{t}(\gamma(\rho
_{0}))$ in terms of $\rho_{0}$.

There is very little literature on this idea, though it certainly seems
that this probabilistic tool is quite useful for estimating
second-order derivatives for evolution equations. The only reference to
this approach we are aware of is in \cite{CranstonTripleCouple}, where
it is essentially used to estimate the Hessian of harmonic functions on
Euclidean domains.

To make this idea more precise, we will develop a mechanism of triple
coupling (i.e., a coupling of three particles, as opposed to just
two). We will use mirror coupling for the processes corresponding to
the particles $x$ and $y$, taking them as time changed Brownian
motions, as in the previous section. Now we wish to include a third
particle, namely $z$, which we want to couple together with $x$ and
$y$. It is natural to want to have this ``middle particle'' remains on
the geodesic joining the other two as it is pictured in Figure~\ref
{Figtriple}.

%
\begin{figure}[b]

\includegraphics{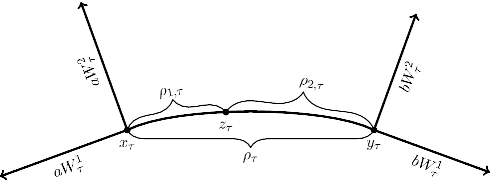}

\caption{The configuration of the three particles $x_\tau,y_\tau$
being mirror coupled and $z_\tau$ on the geodesic between them.}
\label{Figtriple}
\end{figure}
We will see that this is possible (at least in the cases we are
considering) if we allow it to evolve as time-changed Brownian motion,
possibly with drift along the direction of the geodesic.

Instead of starting with a time-changed Brownian motion with a drift,
$z_{\tau}$ and then trying to figure out the time change and drift
necessary so that it stays on the geodesic, we do it the other way
around. Namely, since we want the particle $z_{\tau}$ to move on the
geodesic, we determine the conditions on the distance to one of the
other points so that the corresponding point on the geodesic is a
time-changed Brownian motion with a drift along the geodesic. For the
purpose of the Hessian estimates, and in light of the gradient decay,
this will be sufficient.

\subsection{Rigorous approach}

Assume we start with an arbitrary Riemannian surface $M$ and that
$x_{\tau}$, $y_{\tau}$ run as time-changed Brownian motions with the
time changes $a$ and $b$, as above in Figure~\ref{Figtriple}. The
idea is that the middle point $z_{\tau}$ on the geodesic joining
$x_{\tau}$ and $y_{\tau}$ is completely described by specifying the
distance $\rho_{1,\tau}$ from $z_{\tau}$ to one of the ends, say
$x_{\tau}$. We use a mirror coupling of the particles $x_{\tau}$ and
$y_{\tau}$ and $\rho_{1,\tau}$ will be described in terms of a
real-valued SDE. In addition to~$\rho_{1}$, we will also consider
$\rho_{2}$, which in intuitive terms is just the distance from the
middle particle $z_{\tau}$ to $y_{\tau}$. We are seeking several key
symmetry properties which will play an important role in the economy of
the Hessian estimates to follow.

In what follows, as always, fix a time horizon $t>0$, and assume that
$a=a(\tau,x,y,\rho_{1},\rho_{2})$ and $b=b(\tau,x,y,\rho_{1},\rho
_{2})$ are two positive functions defined on $[0,t]\times M\times
M\times[0,\infty)\times[0,\infty)$, which will be time changes for
the processes $x_{\tau}$ and $y_{\tau}$. To describe this, again
denote by $m_{x,y}\dvtx T_{x}M\to T_{y}M$ the mirror map, that is, the
parallel transport along the minimal unit speed geodesic $\gamma
_{x,y}$ joining $x$ and~$y$ (assuming that $x$, $y$ are not at each
other's cut locus) followed by the reflection about the orthogonal
direction to the geodesic at $y$.

The system we start with is the following:
%
\begin{equation}
\label{tc1} \qquad\cases{
\displaystyle dx_{\tau} = a(\tau) \Biggl[ \Biggl[\sum
_{i=1}^2 \mathfrak{e}_{i}(x_{\tau})
\circ dW_{\tau}^i \Biggr] \Biggr],
\vspace*{3pt}\cr
\displaystyle dy_{\tau} = b(
\tau) \Biggl[ \Biggl[\sum_{i=1}^2
\Psi_{\tau
}\bigl[\mathfrak{e}_{i}(y_{\tau})\bigr]
\circ d W_{\tau}^i \Biggr] \Biggr],
\vspace*{3pt}\cr
\displaystyle d\rho_{1,\tau}=-a(
\tau)\sum_{i=1}^{2}\bigl\langle\mathfrak
{e}_{i}(x_{\tau}),\dot{\gamma}_{\tau}(0) \bigr\rangle
\,d W^{i}_{\tau
}+\alpha(\tau)\,dW^{3}_{\tau}+
\beta(\tau) \,d\tau,
\vspace*{3pt}\cr
\displaystyle d\rho_{2,\tau}=b(\tau)\sum
_{i=1}^{2}\bigl\langle\Psi_{\tau}\bigl[
\mathfrak{e}_{i}(y_{\tau})\bigr],\dot{\gamma}_{\tau}
\bigl(l(\tau)\bigr) \bigr\rangle \,d W^{i}_{\tau}+\tilde{\alpha}(
\tau)\,dW^{3}_{\tau}+\tilde{\beta}(\tau) \,d\tau,}
\end{equation}
where $\Psi_{\tau}=m_{x_{\tau},y_{\tau}}\mathfrak{e}(x_{\tau
})\mathfrak{e}(y_{\tau})^{-1}$ is the reflection map acting on
$T_{y_{\tau}}M$, $\gamma_{\tau}$ is the minimal geodesic running at
unit speed from $x_{\tau}$ to $y_{\tau}$, and $W^{3}$ is a
one-dimensional Brownian motion independent of $(W^{1},W^{2})$. As a
notation, let $l(\tau)$ be the length of the geodesic $\gamma_{\tau
}$. Here, we do not specify what the functions $\alpha$, $\tilde
{\alpha}$, $\beta$, $\tilde{\beta}$ are as we will do this along
the way, depending on the properties we want to reveal. They are
defined, like $a$ and $b$, on $[0,t]\times M\times M\times[0,\infty
)\times[0,\infty)$. The equations for $\rho_{1}$ and $\rho_{2}$ can
be thought of as the equations of the distances from the middle point
$z_{\tau}$ to $x_{\tau}$ and $y_{\tau}$, as indicated in the
previous section, and also as discussed for the coupling in \cite
{Elton}, Section~6.6. Notice here an important point, namely, since
\[
\bigl\langle\Psi_{\tau}\mathfrak{e}_{i}(y_{\tau}),
\dot{\gamma}_{\tau
}\bigl(l(\tau)\bigr) \bigr\rangle=-\bigl\langle
\mathfrak{e}_{i}(x_{\tau}),\dot{\gamma}_{\tau}(0)
\bigr\rangle,
\]
the last equation of (\ref{tc1}) can be rewritten as
%
\begin{equation}
\label{tc1b} d\rho_{2,\tau}=-b(\tau)\sum_{i=1}^{2}
\bigl\langle\mathfrak{e}_{i}(x_{\tau}),\dot{
\gamma}_{\tau}(0) \bigr\rangle \,d W^{i}_{\tau
}+\tilde{
\alpha}(\tau)\,dW^{3}_{\tau}+\tilde{\beta}(\tau) \,d\tau.
\end{equation}

We should also point out that to be in tune with the system (\ref
{e1b}) we should take $\sqrt{2}a$ instead of $a$ and $\sqrt{2}b$
instead of $b$. Since this is not important for this section and to
avoid carrying around an extra $\sqrt{2}$ factor, we will work with
the system in the form (\ref{tc1}).

There is no problem with the existence of a solution for the system
(\ref{tc1}) (as long as the entries $a,b,\alpha,\beta,\tilde
{\alpha}$ and $\tilde{\beta}$ are smooth) up to the stopping time
$\mathcal{T}$, which is the first time $\tau$ when $\rho_{1,\tau
}\rho_{2,\tau}$ hits $0$ or when $d(x_{\tau},y_{\tau})$ hits a
(small) $r_{0}$ smaller than the injectivity radius (with respect to
the background metric $h$). This way we have a well-defined system and
do not have to worry about the extension beyond the cut locus, as we
did in the previous (two particle) coupling case. From now on, during
this section we will assume that the time in the system (\ref{tc1})
is run until $\mathcal{T}$.

The object of interest to us is the process $(x,y,\rho_{1},\rho
_{2})$. It is clear that this is a diffusion, and it is a relatively
straightforward task to determine that the generator of $(x,y,\rho
_{1},\rho_{2})$ is
\begin{eqnarray*}
&&\frac{a^{2}}{2}\Delta_{x}+\frac{b^{2}}{2}
\Delta_{y} +\frac
{a^{2}+\alpha^{2}}{2}\partial_{\rho_{1}}^{2}+
\frac{b^{2}+\tilde
{\alpha}^{2}}{2}\partial_{\rho_{2}}^{2}+ab\langle
m_{xy}X_{1,i},Y_{2,j} \rangle X_{1,i}Y_{2,j}
\\
&&\qquad{}-a^{2} \bigl\langle X_{1,i},\dot{\gamma}_{x,y}(0)
\bigr\rangle X_{1,i}\partial_{\rho_{1}}-ab \bigl\langle
X_{1,i},\dot{\gamma}_{x,y}(0) \bigr\rangle X_{1,i}
\partial_{\rho_{2}}
\\
&&\qquad{} -a b \bigl\langle X_{1,i},\dot{\gamma}_{x,y}(0) \bigr
\rangle m_{x,y}X_{1,i}\partial_{\rho_{1}} -
b^{2} \bigl\langle X_{1,i},\dot{\gamma}_{x,y}(0)
\bigr\rangle m_{x,y}X_{1,i}\partial_{\rho_{2}}
\\
&&\qquad{} +\Biggl(\alpha\tilde{\alpha}-ab\sum_{i=1}^{2}
\bigl\langle X_{1,i},\dot{\gamma}_{x,y}(0) \bigr
\rangle^{2}\Biggr)\partial_{\rho_{1}}\partial_{\rho
_{2}}+
\beta\partial_{\rho_{1}}+\tilde{\beta}\partial_{\rho_{2}},
\end{eqnarray*}
with $X_{1,i}$, $i=1,2$ being an orthonormal basis of $T_{x}M$ and
$Y_{2,j}$, $j=1,2$ an orthonormal basis of $T_{y}M$. In fact, we can
choose $X_{1,1}=\dot{\gamma}_{x,y}(0)$ and $X_{1,2}=\xi_{1}\in
T_{x}M$, which is perpendicular to $\dot{\gamma}_{x,y}(0)$.
Similarly, choose $Y_{2,1}=\dot{\gamma}_{y,x}(0)$ and $Y_{2,2}=\xi
_{2}=m_{x,y}\xi_{1}$, or, in simpler terms, the parallel transport of
$\xi_{1}$ along the geodesic $\gamma_{x,y}$. With these choices, the
generator simplifies to
%
\begin{eqnarray}
\label{tc2} \mathcal{L}&=&\frac{a^{2}}{2}\Delta_{x}+
\frac{b^{2}}{2}\Delta_{y} +\frac{a^{2}+\alpha^{2}}{2}\partial_{\rho_{1}}^{2}+
\frac{b^{2}+\tilde{\alpha}^{2}}{2}\partial_{\rho_{2}}^{2}\nonumber
\\
&&{}  +ab\bigl( \dot{
\gamma}_{x,y}(0)\dot{\gamma}_{y,x}(0)+\xi_{1}
\xi_{2}\bigr)
-a^{2} \dot{\gamma}_{x,y}(0)\partial_{\rho_{1}}-a b
\dot{\gamma}_{y,x}(0)\partial_{\rho_{1}}
\\
&&{}-ab \dot{\gamma}_{x,y}(0)\partial_{\rho_{2}}-b^{2}
\dot{\gamma}_{y,x}(0)\partial_{\rho_{2}}\nonumber
+(\alpha\tilde{\alpha}-ab)\partial_{\rho_{1}}\partial_{\rho
_{2}}+
\beta\partial_{\rho_{1}}+\tilde{\beta}\partial_{\rho_{2}}.\nonumber
\end{eqnarray}

The first property we want to see is that $\rho_{1}+\rho_{2}=\rho$.
This property is nothing but the geometric picture that $\rho_{1}$ is
the distance from $z_{\tau}$ to $x_{\tau}$ while $\rho_{2}$ is the
distance between $z_{\tau}$ to $y_{\tau}$.

To do this, we recall that the distance $\rho_{\tau}$ between the
mirror-coupled processes $x_{\tau}$ and $y_{\tau}$ is given by
%
\begin{equation}
\label{tc26} d\rho_{\tau}=-\bigl(a(\tau)+b(\tau)\bigr)\sum
_{i=1}^{2}\bigl\langle\mathfrak{e}_{i}(x_{\tau}),
\dot{\gamma}_{\tau}(0) \bigr\rangle \,d W^{i}_{\tau
}+
\frac{1}{2}\mathcal{I}(\tau) \,d\tau,
\end{equation}
where $\mathcal{I}$ is the index form of the Jacobi field $J(\tau)$
along the geodesic $\gamma_{\tau}$ which, at the endpoints, has
values $aE$ and $bE$. We use the notation $E$ for the parallel
translation of $\xi_{1}\in T_{x}M$ along the geodesic joining $x$ and
$y$. The index form is computed as
\[
\mathcal{I}(J,J)=\int_{0}^{l(\gamma)}\bigl\llvert
\dot{J}(u)\bigr\rrvert^{2}+\bigl\langle R\bigl(\dot{\gamma}(u),J(u)
\bigr)\dot{\gamma}(u),J(u) \bigr\rangle \,du,
\]
with $l(\gamma)$ being the length of the geodesic $\gamma$. Here, the
curvature tensor is the standard tensor curvature given as in \cite{Chee}
\[
R(X,Y)=\nabla_{X}\nabla_{Y}-\nabla_{Y}
\nabla_{X}-\nabla_{[X,Y]}.
\]
Furthermore, a simple integration by part gives that
%
\begin{equation}
\label{tc101} \mathcal{I}(J,J)=\bigl\langle\dot{J}\bigl(l(\gamma)\bigr),J
\bigl(l(\gamma)\bigr) \bigr\rangle-\bigl\langle\dot{J}(0),J(0)\bigr
\rangle.
\end{equation}
On the other hand, from (\ref{tc1}),
\begin{eqnarray*}
&& d(\rho_{1,\tau}+\rho_{2,\tau})
\\
&&\qquad =-\bigl(a(\tau)+b(\tau)\bigr)\sum
_{i=1}^{2}\bigl\langle\mathfrak{e}_{i}(x_{\tau}),
\dot{\gamma}_{\tau
}(0) \bigr\rangle \,d W^{i}_{\tau}+
\bigl(\alpha(\tau)+\tilde{\alpha}(\tau)\bigr)\,dW^{3}_{\tau}
\\
&&\quad\qquad{} +
\bigl(\beta(\tau)+\tilde{\beta}(\tau)\bigr)\,d\tau.
\end{eqnarray*}
We clearly see here that $\rho_{\tau}$ and $\rho_{1,\tau}+\rho
_{2,\tau}$ have the same martingale part if $\tilde{\alpha}=-\alpha
$. The choice for $\beta$ and $\tilde{\beta}$ is provided by the
following result.

%
\begin{teo}\label{tct1}
Assume that
%
\begin{equation}
\label{tc31} \cases{ \displaystyle\tilde{\alpha}=-\alpha,
\vspace*{3pt}\cr
\displaystyle\beta(\tau,x,y,
\rho_{1},\rho_{2})=\frac{1}{2}\int
_{0}^{\rho
_{1}} \bigl(\bigl\llvert\dot{J}(u)\bigr
\rrvert^{2}+\bigl\langle R\bigl(\dot{\gamma}(u),J(u)\bigr)\dot{
\gamma}(u),J(u) \bigr\rangle\bigr)\,du,
\vspace*{3pt}\cr
\displaystyle\tilde{\beta}(\tau,x,y,
\rho_{1},\rho_{2})=\frac{1}{2}\int
_{l(\gamma)-\rho_{2}}^{l(\gamma)} \bigl(\bigl\llvert\dot{J}(u)\bigr
\rrvert^{2}+\bigl\langle R\bigl(\dot{\gamma}(u),J(u)\bigr)\dot{
\gamma}(u),J(u) \bigr\rangle\bigr)\,du,}\hspace*{-28pt}
\end{equation}
where $J$ is the Jacobi field along the geodesic $\gamma$ from $x$ to
$y$ and having values $aE$ at $0$ and $bE$ at $l(\gamma)$.

If in addition, $\rho_{1,0}=\rho_{2,0}=\rho_{0}/2$, then almost
surely $\rho_{\tau}=\rho_{1,\tau}+\rho_{2,\tau}$.
\end{teo}

\begin{pf}
Take $\tilde{\rho}_{1,\tau}=\rho_{\tau}-\rho_{2,\tau}$. It is
clear now that we have
\[
d(\tilde{\rho}_{1,\tau}-\rho_{1,\tau})=\int_{0}^{\tilde{\rho
}_{1,\tau}}
A(u)\,du-\int_{0}^{\rho_{1,\tau}} A(u)\,du
\]
with
\[
A(u)=\tfrac{1}{2} \bigl[\bigl\llvert\dot{J}(u)\bigr\rrvert
^{2}+\bigl\langle R\bigl(J(u),\dot{\gamma}(u)\bigr)\dot{
\gamma}(u),J(u) \bigr\rangle \,du \bigr].
\]
From here, the fact that $\tilde{\rho}_{1,0}=\rho_{1,0}$ (or $\tilde
{\rho}_{1,0}-\rho_{1,0}=0$) and standard application of Gronwall's
inequality leads to $\tilde{\rho}_{1,\tau}=\rho_{1,\tau}$, which
is what we want.
\end{pf}

We return now to the case where the curvature is constant and start
with \cite{DoC}, Lemma 3.4, which says that
%
\begin{equation}
\label{tc5} R(X,Y)Z=-r\bigl(\langle X,Z \rangle Y -\langle Y,Z \rangle X
\bigr).
\end{equation}
We should point out that do Carmo \cite{DoC} takes the curvature to be
given by the negative of the curvature we consider here, or for that
matter other people as, for instance, \cite{Chee}. Then the Jacobi field
equation becomes
\[
\ddot{J}-R(\dot{\gamma},J)\dot{\gamma}=0
\]
or equivalently,
%
\begin{equation}
\label{tc17} \ddot{J}+rJ-r\langle\dot{\gamma},J \rangle\dot{\gamma}=0.
\end{equation}
Since this Jacobi field is perpendicular to the geodesic, it follows that
\[
\cases{ \ddot{J}+rJ=0,
\vspace*{3pt}\cr
J(0)=aE,
\vspace*{3pt}\cr
J\bigl(l(\gamma)\bigr)=bE.}
\]
The solution is
%
\begin{equation}
\label{tc30} J(s)=\bigl(aw_{1}(s)+bw_{2}(s)\bigr)E(s)
\qquad\mbox{for }s\in\bigl[0,l(\gamma)\bigr],
\end{equation}
where $w_{1},w_{2}$ are defined on the interval $[0,l(\gamma)]$ by the
following ODEs:
%
\begin{equation}
\label{tc21} \cases{ \ddot{w}_{1}+rw_{1}=0,
\vspace*{3pt}\cr
w_{1}(0)=1,
\vspace*{3pt}\cr
w_{1}\bigl(l(\gamma)\bigr)=0,} \quad
\mbox{and}\quad\cases{ \ddot{w}_{2}+rw_{2}=0,
\vspace*{3pt}\cr
w_{2}(0)=0,
\vspace*{3pt}\cr
w_{2}\bigl(l(\gamma)\bigr)=1.}
\end{equation}

Combining now (\ref{tc101}) and the Jacobi field just considered
reveals that
\begin{eqnarray*}
&& \int_{0}^{s}\bigl\llvert\dot{J}(u)\bigr
\rrvert^{2}-\bigl\langle R\bigl(J(u),\dot{\gamma}(u)\bigr)\dot{
\gamma}(u),J(u) \bigr\rangle \,du
\\
&&\qquad =\int_{0}^{s}\bigl
\llvert\dot{J}(u)\bigr\rrvert^{2}-r\bigl\llvert J(u)\bigr\rrvert
^{2}\,du
\\
&&\qquad =\bigl\langle\dot{J}(s),J(s)\bigr\rangle-\bigl\langle\dot
{J}(0),J(0)\bigr
\rangle
\\
&&\qquad =\bigl(aw_{1}(s)+bw_{2}(s)\bigr) \bigl(a
\dot{w}_{1}(s)+b\dot{w}_{2}(s)\bigr)-b\bigl(a\dot
{w}_{1}(0)+b\dot{w}_{2}(0)\bigr)
\end{eqnarray*}
and
\begin{eqnarray*}
&& \int_{s}^{l(\gamma)}\bigl\llvert\dot{J}(u)\bigr
\rrvert^{2}-\bigl\langle R\bigl(J(u),\dot{\gamma}(u)\bigr)\dot{
\gamma}(u),J(u) \bigr\rangle \,du
\\
&&\qquad =\bigl\langle\dot{J}\bigl(l(\gamma)\bigr),J
\bigl(l(\gamma)\bigr)\bigr\rangle-\bigl\langle\dot{J}(s),J(s)\bigr
\rangle
\\
&&\qquad =b\bigl(a\dot{w}_{1}\bigl(l(\gamma)\bigr)+b\dot{w}_{2}
\bigl(l(\gamma)\bigr)\bigr)-\bigl(aw_{1}(s)+bw_{2}(s)\bigr)
\bigl(a\dot{w}_{1}(s)+b\dot{w}_{2}(s)\bigr).
\end{eqnarray*}
A direct consequence of these formulae and the fact that
$w_{2}(s)=w_{1}(l(\gamma)-s)$, plus a few elementary manipulations,
results in
\begin{eqnarray*}
&& \int_{l(\gamma)-s}^{l(\gamma)}\bigl\llvert\dot{J}(u)\bigr
\rrvert^{2}-\bigl\langle R\bigl(J(u),\dot{\gamma}(u)\bigr)\dot{
\gamma}(u),J(u) \bigr\rangle \,du
\\
&&\qquad =\bigl(bw_{1}(s)+aw_{2}(s)
\bigr) \bigl(b\dot{w}_1(s)+a\dot{w}_2(s)\bigr)-b\bigl(b
\dot{w}_1(0)+a\dot{w}_2(0)\bigr).
\end{eqnarray*}
Summarizing, the choices of $\beta$ and $\tilde{\beta}$ from (\ref
{tc31}) in the case of constant curvature become more explicit as
%
\begin{equation}
\label{tc33} \cases{ \beta=\frac{1}{2} \bigl(\bigl(aw_{1}(
\rho_{1})+bw_{2}(\rho_{1})\bigr)
\cr
\hspace*{28pt}{}\times \bigl(a\dot
{w}_1(\rho_{1})+b\dot{w}_2(
\rho_{1})\bigr)-a\bigl(a\dot{w}_1(0)+b\dot
{w}_2(0)\bigr) \bigr),
\vspace*{3pt}\cr
\tilde{\beta}=\frac{1}{2} \bigl(
\bigl(bw_{1}(\rho_{2})+aw_{2}(\rho
_{2})\bigr)
\cr
\hspace*{28pt}{}\times \bigl(b\dot{w}_1(\rho_{2})+a
\dot{w}_2(\rho_{2})\bigr)-b\bigl(b\dot{w}_1(0)+a
\dot{w}_2(0)\bigr)\bigr).}
\end{equation}
It goes without saying that here $a$ and $b$ are evaluated at $(\tau
,x,y,\rho_{1},\rho_{2})$.

We say that a function $f(\tau,x,y,\rho_{1},\rho_{2})$ is symmetric
in $\rho_{1}$ and $\rho_{2}$ if
$f(\tau,x,y,\rho_{1},\rho_{2})=f(\tau,x,y,\rho_{2},\rho_{1})$.

Before we move on to another property of the diffusion $(x,y,\rho
_{1},\rho_{2})$, we close the discussion so far with the following
property of the choices of $\beta$ and $\tilde{\beta}$ from~(\ref{tc33}):
\[
\begin{tabular}{p{260pt}}
If $a$ and $b$ are equal and symmetric in $\rho_{1}$ and $\rho
_{2}$, then $\beta(\tau,x,y,\rho_{1},\rho_{2})=
\tilde{\beta}(\tau,x,y,\rho_{2},\rho_{1})$.
\end{tabular}
\]

A symmetry which plays a crucial role in the Hessian estimates is the following.

%
\begin{teo}\label{tct2}
If, in equation (\ref{tc1}), we take
\[
\cases{ a\mbox{ and }\alpha\mbox{ symmetric in }\rho_{1}
\mbox{ and }\rho_{2},
\vspace*{3pt}\cr
b=a,
\vspace*{3pt}\cr
\tilde{\alpha}=-\alpha,
\vspace*{3pt}\cr
\tilde{
\beta}(\tau,x,y,\rho_{1},\rho_{2})=\beta(\tau,x,y,\rho
_{2},\rho_{1}),
\vspace*{3pt}\cr
\rho_{1,0}=
\rho_{2,0},}
\]
then the processes $(x,y,\rho_{1},\rho_{2})$ and $(x,y,\rho_{2},\rho
_{1})$ have the same law. In particular, the processes $(x,y,\rho
_{1})$ and $(x,y,\rho_{2})$ have the same law.
\end{teo}

\begin{pf} Although this is almost trivial, we say a word about it. If
$\mathcal{L}$ is the generator of a diffusion $\omega_{\tau}$ on a
manifold $\mathcal{M}$ and $\pi\dvtx\mathcal{M}\to\mathcal{M}$ is such
that for any smooth function $\phi\dvtx\mathcal{M}\to\Re$,
\[
\mathcal{L}(\phi\circ\pi)=(\mathcal{L}\phi)\circ\pi,
\]
then uniqueness of the diffusion implies that $\omega$ and $\pi
(\omega)$ have the same law. This can be easily seen from the
martingale characterization of the law of the diffusion. We apply this
to the operator $\mathcal{L}$ from (\ref{tc2}) and the map $\pi
(x,y,\rho_{1},\rho_{2})=(x,y,\rho_{2},\rho_{1})$. The rest follows.
\end{pf}

Notice that [cf. (\ref{tc33})], the choices of $\beta$ and $\tilde
{\beta}$ from Theorem~\ref{tct1} are actually consistent with the
conditions of Theorem~\ref{tct2} under the assumptions that $a$ and
$b$ are equal and symmetric.

The ``middle particle'' process we are interested is
%
\begin{equation}
\label{tc14} z_{\tau}=\gamma_{x_{\tau},y_{\tau}}(\rho_{1,\tau}).
\end{equation}

The symmetry between $\rho_{1}$ and $\rho_{2}$ should be interpreted
as saying that the reflection of the process $z_{\tau}$ with respect
to the middle point of the geodesic $\gamma_{x_{\tau},y_{\tau}}$ has
the same law as $z_{\tau}$ itself.

Our next objective is the law of $z_{\tau}$. Before we jump into the
heart of the matter, we take up a discussion on the following class of
vector fields that are the main actors in our computation.

Assume we have a geodesic $\gamma$ from $x$ to $y$ with length $l$ and
consider a smooth, two-parameter geodesic perturbation $f\dvtx
(-\epsilon,\epsilon)\times(-\epsilon,\epsilon)\times[0,l]\to M$ of
$\gamma
$, that is, $f(0,0,s)=\gamma(s)$ and for each fixed choice of $u$ and
$v$, the curve $s\to f(u,v,s)$ is a geodesic. One of the things we want
to understand is the field
\[
\mathcal{H} (s)=\frac{D}{du}\frac{D}{dv}f(u,v,s)
\Big|_{u=0,v=0}.
\]

Let $J_{v}(s)=\frac{D}{dv}f(u,v,s)\mid_{u=v=0}$ be the Jacobi field
obtained by differentiating $f$ with respect to $v$ and we will use
$J_{v}(u,s)=\frac{D}{dv}f(u,v,s)\mid_{v=0}$ as the Jacobi field
which is still depending on $u$. Similarly, let $J_{u}(s)=\frac
{D}{du}f(u,v,s)\mid_{u=v=0}$ be the Jacobi field obtained by
differentiating $f$ with respect to $u$ and use $J_{u}(v,s)=\frac
{D}{du}f(u,v,s)\mid_{u=0}$. In order to determine the equation
satisfied by $\mathcal{H}
$, we recall here \cite{DoC}, Lemma 4.1, which asserts that for any
two-parameter family $g(a,b)$ and vector field $V$ along $g$,
%
\begin{equation}
\label{tc16} \frac{D}{da}\frac{D}{db}V-\frac{D}{db}
\frac{D}{da}V=-R \biggl(\frac
{Dg}{db},\frac{Dg}{da} \biggr)V.
\end{equation}

Now, what we want to do is to find a differential equation satisfied by
$\mathcal{H}
$. As pointed out already, $\mathcal{H}
(s)=\frac{D}{du}J_{v}(u,s)\mid_{u=0}$ and starting with (\ref
{tc17}) for $J_{v}(u)$, namely,
\[
\frac{D^{2}}{ds^2}J_{v}(u)+rJ_{v}(u)-r\langle\dot{
\gamma},J_{v} \rangle\dot{\gamma}=0
\]
we take the derivative with respect to $u$ at $u=0$ to arrive at
\[
\frac{D}{du}\frac{D^{2}}{ds^2}J_{v}+r\mathcal{H} -r \biggl
\langle\frac{D}{du}\dot{\gamma},J_{v} \biggr\rangle\dot{
\gamma}-r\langle\dot{\gamma},\mathcal{H} \rangle\dot{\gamma}-r\langle
\dot{
\gamma},J_{v} \rangle\frac
{D}{du}\dot{\gamma}=0.
\]
To move forward, use that $\frac{D}{du}\dot{\gamma}\mid
_{u=0}=\frac{D}{ds}\frac{D}{du}\gamma\mid_{u=0}=\dot{J}_{u}$ to
re-write the previous equation as
\[
\frac{D}{du}\frac{D^{2}}{ds^2}J_{v}+r\mathcal{H} -r\langle
\mathcal{H},\dot{\gamma} \rangle\dot{\gamma}-r\langle\dot{J}_{u},J_{v}
\rangle\dot{\gamma}-r\langle\dot{\gamma},J_{v} \rangle
\dot{J}_{u}=0.
\]

Our task now is to commute the derivatives with respect to $u$ and $s$.
For this, use~(\ref{tc16}) and (\ref{tc5}) to justify that at $u=0$,
{\renewcommand{\theequation}{*}
\begin{eqnarray}\label{eq*}
\frac{D}{du}\frac{D^{2}}{ds^2}J_{v}&=&
\frac{D}{ds}\frac{D}{du}\frac
{D}{ds}J_{v}-R \biggl(
\frac{Df}{ds},\frac{Df}{du} \biggr)\dot{J}_{v}\nonumber
\\
&=& \frac{D}{ds}\frac{D}{du}\frac{D}{ds}J_{v}-R (\dot{
\gamma},J_{u} )\dot{J}_{v}
\\
&=&\frac{D}{ds}\frac{D}{du}\frac{D}{ds}J_{v}+ r
\bigl(\langle\dot{\gamma},\dot{J}_{v} \rangle J_{u}-
\langle J_{u},\dot{J}_{v} \rangle\dot{\gamma} \bigr)\nonumber
\end{eqnarray}}\setcounter{equation}{65}%
and once again employing (\ref{tc16}),
{\renewcommand{\theequation}{**}
\begin{eqnarray}\label{eq**2}
\frac{D}{ds}\frac{D}{du}\frac{D}{ds}J_{v}&=&
\frac
{D^{2}}{ds^{2}}\frac{D}{du}J_{v}-\frac{D}{ds} \biggl(R
\biggl(\frac
{Df}{ds},\frac{Df}{du} \biggr)J_{v} \biggr)\nonumber
\\
&=& \ddot{\mathcal{H} }-\frac{D}{ds} \bigl(R (\dot{\gamma},J_{u}
)J_{v} \bigr)\nonumber
\nonumber\\[-8pt]\\[-8pt]\nonumber
&=&\ddot{\mathcal{H} }+r\frac{D}{ds} \bigl(\langle\dot{\gamma},
J_{v}\rangle J_{u}-\langle J_{u},
J_{v}\rangle\dot{\gamma} \bigr)
\\
&=&\ddot{\mathcal{H} }+r \bigl(\langle\dot{\gamma}, \dot{J}_{v}
\rangle J_{u}+\langle\dot{\gamma}, J_{v}\rangle
\dot{J}_{u}-\langle\dot{J}_{u}, J_{v}\rangle
\dot{\gamma}-\langle J_{u}, \dot{J}_{v}\rangle\dot{\gamma}
\bigr).\nonumber
\end{eqnarray}}\setcounter{equation}{65}%
Putting together (\ref{eq*}) and (\ref{eq**2}), we obtain
\[
\frac{D}{du}\frac{D^{2}}{ds^2}J_{v}=\ddot{\mathcal{H} }+r
\bigl(2\langle\dot{\gamma}, \dot{J}_{v}\rangle J_{u}+
\langle\dot{\gamma}, J_{v}\rangle\dot{J}_{u}-\langle
\dot{J}_{u}, J_{v}\rangle\dot{\gamma}-2\langle
J_{u}, \dot{J}_{v}\rangle\dot{\gamma} \bigr),
\]
and finally since the boundary conditions are pretty straightforward we
get the following:
%
\begin{equation}
\label{tc18} \qquad\cases{
\displaystyle \ddot{\mathcal{H}}+r\mathcal{H}-r\langle\mathcal
{H},\dot{\gamma} \rangle\dot{\gamma}+2r \bigl(\langle\dot{\gamma}, \dot{J}_{v}
\rangle J_{u}-\langle\dot{J}_{u}, J_{v}\rangle
\dot{\gamma}-\langle J_{u}, \dot{J}_{v}\rangle\dot{\gamma}
\bigr)=0,
\vspace*{3pt}\cr
\displaystyle\mathcal{H}(0)= \frac{D}{du}\frac{D}{dv}f(u,v,0)
\Big|_{u=v=0},
\vspace*{3pt}\cr
\displaystyle\mathcal{H}(l)= \frac{D}{du}\frac{D}{dv}f(u,v,l)
\Big|_{u=v=0}.}
\end{equation}

We discussed the case of a two-parameter perturbation of the geodesic
$\gamma$ in the form $f(u,v,s)$ but exactly the same argument works
also for the case where $f(u,s)$ is a perturbation with geodesics of
$\gamma$, and we consider the field
\[
\mathcal{H} (s)=\frac{D^{2}}{du^{2}}f(u,s)\Big|_{u=0}.
\]
The main result from the argument above then gives that
%
\begin{equation}
\label{tc19} \cases{
\displaystyle\ddot{\mathcal{H} }+r\mathcal{H} -r\langle\mathcal{H},
\dot{\gamma} \rangle\dot{\gamma}+2r \bigl(\langle\dot{\gamma}, \dot
{J}_{u}\rangle J_{u}-2\langle J_{u},
\dot{J}_{u}\rangle\dot{\gamma} \bigr)=0,
\vspace*{3pt}\cr
\displaystyle\mathcal{H}(0)=
\frac{D^{2}}{du^{2}}f(u,0)\Big|_{v=0},
\vspace*{3pt}\cr
\displaystyle\mathcal{H}(l)=
\frac{D^{2}}{du^{2}}f(u,l)\Big|_{u=0},}
\end{equation}
with $J_{u}(s)=\frac{D}{du}f(u,s)\mid_{u=0}$.

The perturbation $g(u,v,s)$ that will appear below is slightly
different from the perturbation $f(u,v,s)$ considered above. To
describe it, take a unit speed geodesic $\gamma$ defined on $[0,l]$
and consider two geodesic curves, $\eta_{1,u}$ with $\eta
_{1,0}=\gamma(0)$ and another, $\eta_{2,v}$ so that $\eta
_{2,0}=\gamma(l)$. Let $g(u,v,\cdot)$ be the geodesic run at unit
speed from $\eta_{1,u}$ to $\eta_{2,v}$. One problem immediately
arising with this choice is that the parameter in the geodesic
direction, namely $s$, is no longer running in the interval $[0,l]$ and
this is the reason we have to treat it separately. Consequently, the
above calculations do not apply in the same way as they were carried
out in the case of $f(u,v,s)$.

To fix this, let us denote by $l(u,v)$, the length of the geodesic
$\gamma_{u,v}=\gamma_{\eta_{1,u},\eta_{v,2}}$ and reparametrize
this geodesic such that it has constant speed equal to $l(u,v)/l$. More
precisely if $\tilde{\gamma}_{u,v}$ is the reparametrized geodesic,
then $\gamma_{u,v}(s)=\tilde{\gamma}_{u,v}(s l/l(u,v))$. Now let
$f(u,v,s)=\tilde{\gamma}_{u,v}(s)$. Clearly, now the parameter $s$
for $f(u,v,s)$ runs in the interval $[0,l]$ and
\[
g(u,v,s)=f\bigl(u,v,sl/l(u,v)\bigr).
\]
Our interest is again in the understanding of the field $\mathcal
{K}(s)=\frac{D}{d v}\frac{D}{d u}g(u,v,s)\mid_{u=v=0}$. We do this
via the fact that $f(u,v,s)=g(u,v,sl(u,v)/l)$ and upon differentiation
with respect to $u$ to get
%
\begin{eqnarray}\label{tct100}
\frac{D }{\partial u}f(u,v,s)
&=& \frac{D }{\partial
u}g\bigl(u,v,sl(u,v)/l
\bigr)
\nonumber\\[-8pt]\\[-8pt]\nonumber
&&{} +\frac{s}{l} \biggl(\frac{d}{du}l(u,v) \biggr)
\frac{D}{\partial s}g\bigl(u,v,sl(u,v)/l\bigr)
\end{eqnarray}
and from this and the first variation formula \cite{Chee}, equation
(1.3), page~5, to get the relation between Jacobi field $J_{u}^{g}$ generated
by $g$ and $J_{u}^{f}$ as
\[
J_{u}^{f}(s)=J_{u}^{g}(s)-
\frac{s}{l}\bigl\langle\dot{\eta}_{1,0},\dot{\gamma}(0) \bigr
\rangle\dot{\gamma}(s).
\]
Similarly,
\[
J_{v}^{f}(s)=J_{v}^{g}(s)+
\frac{s}{l}\bigl\langle\dot{\eta}_{2,0},\dot{\gamma}(l) \bigr
\rangle\dot{\gamma}(s).
\]
Now taking the derivative with respect to $v$ in (\ref{tct100}), set
$u=v=0$ to obtain
\begin{eqnarray*}
\mathcal{H}(s)&=&\mathcal{K}(s)+\frac{s}{l}\bigl\langle\dot{\eta
}_{2,0},\dot{\gamma}(l) \bigr\rangle\dot{J}_{u}^{g}(s)-
\frac
{s}{l}\bigl\langle\dot{\eta}_{1,0},\dot{\gamma}(0) \bigr
\rangle\dot{J}_{v}^{g}(s)
\\
&&{} +\frac{s}{l} \biggl(
\frac{d^{2}}{dv \,du}l(u,v) \biggr)\bigg|_{u=v=0}\dot{\gamma}(s).
\end{eqnarray*}

The case of interest in the sequel is the case of geodesics $\eta
_{1,u}$ and $\eta_{2,v}$ such that $\dot{\eta}_{1,0}=E(0)$ and $\dot
{\eta}_{2,0}(l)=E(l)$. In this case, the second and the third terms
vanish while the last term is computed using the second variation
formula which is \cite{Chee}, equation (1.14), page~20. We also learn
that $J^{g}_{u}=J^{f}_{u}$ and $J^{g}_{v}=J^{f}_{v}$ and the last term becomes
\[
\biggl(\frac{d^{2}}{dv \,du}l(u,v) \biggr)\bigg|_{u=v=0}=\mathcal{I}
\bigl(J^{f}_{u},J^{f}_{v}\bigr)=
\frac{1}{2}\bigl(\dot{w}_{1}(l)-\dot{w}_{2}(0)\bigr),
\]
where in between we used a polarization argument for (\ref{tc101})
together with (\ref{tc30}) and~(\ref{tc21}). Thus, we get
%
\begin{equation}
\label{tc102} \mathcal{K}(s)=\mathcal{H}(s)-\frac{s}{2l}\bigl(
\dot{w}_{1}(l)-\dot{w}_{2}(0)\bigr)\dot{\gamma}(s).
\end{equation}

Another situation we encounter below is the following. Take $\eta
_{1,u}$ a geodesic starting at $\gamma(0)$ such that $\dot{\eta
}_{1,0}=E(0)$. Then we take $g(u,s)$ to be the geodesic $\gamma
_{u}(s)$ run at unit speed from $\eta_{1,u}$ to $\gamma(l)$. The
field we are interested in is $\mathcal{K}(s)=\frac
{D^{2}}{du^{2}}g(u,s)\mid_{u=0}$. With a very similar argument, we
can show that
%
\begin{equation}
\label{tc103} \mathcal{K}(s)=\mathcal{H}(s)+\frac{s}{l}
\dot{w}_{1}(0)\dot{\gamma}(s),
\end{equation}
where\vspace*{1pt} $\mathcal{H}(s)=\frac{D^{2}}{du^{2}}g(u,s l/l(u))\mid_{u=0}$
with $l(u)$ being the length of the geodesic from $\eta_{1,u}$ to
$\gamma(l)$.

Similarly, if we take $\eta_{2,v}$ the geodesic starting at $\gamma
(l)$, such that $\dot{\eta}_{2,0}=E(l)$ and $g(v,s)$ being the unit
speed geodesic joining $\eta_{2,v}$ to $\gamma(0)$, and $\mathcal
{K}(s)=\frac{D^{2}}{dv^{2}}g(v,s)\mid_{v=0}$ then
%
\begin{equation}
\label{tc104} \mathcal{K}(s)=\mathcal{H}(s)-\frac{s}{l}
\dot{w}_{2}(l)\dot{\gamma}(s)
\end{equation}
with $\mathcal{H}(s)=\frac{D^{2}}{dv^{2}}g(v,s l/l(v))\mid_{v=0}$
and $l(v)$ the length of the geodesic from $\eta_{2,v}$ to $\gamma(0)$.

We are finally ready for the next result.

%
\begin{teo}\label{tct3}
Assume that
%
\begin{eqnarray}\label{tc24}
&& \alpha(\tau,x,y,\rho_{1},\rho_{2}) = a(\tau,x,y,\rho_{1},\rho_{2})w_{1}(
\rho_{1})+b(\tau,x,y,\rho_{1},\rho_{2})w_{2}(\rho_{1}),\nonumber
\\
&& \theta(\tau,x,y,\rho_{1},\rho_{2})\nonumber
\\
&&\qquad = \beta(\tau,x,y, \rho_{1},\rho_{2})\nonumber
\\
&&\quad\qquad{}+\frac{\rho_{1}}{\rho} \biggl(\frac{a(\tau,x,y,\rho_{1},\rho
_{2})^{2}+b(\tau,x,y,\rho_{1},\rho_{2})^{2}}{2}\dot{w}_{1}(0)
\\
&&\hspace*{79pt} {}-a(\tau,x,y,\rho_{1},\rho_{2})b(\tau,x,y,\rho_{1},
\rho_{2}) \dot{w}_{1}(l) \biggr)\nonumber
\\
&&\quad\qquad{}+r \biggl( \int_{0}^{\rho_{1}}\bigl(a(\tau,x,y,
\rho_{1},\rho_{2})w_{1}(\sigma)+b(\tau,x,y,
\rho_{1},\rho_{2})w_{2}(\sigma)
\bigr)^{2}\,d\sigma\nonumber
\\
&&\hspace*{23pt}\quad\qquad{}-\frac{\rho_{1}}{l}\int_{0}^{l}
\bigl(a(\tau,x,y,\rho_{1},\rho_{2})w_{1}(
\sigma)+b(\tau,x,y,\rho_{1},\rho_{2})w_{2}(
\sigma)\bigr)^{2}\,d\sigma\biggr)\nonumber
\end{eqnarray}
with $w_{1}$ and $w_{2}$ defined by (\ref{tc21}). With these choices,
the process $z_{\tau}=\gamma_{x_{\tau},y_{\tau}}(\rho_{1,\tau})$
has the property that, for any smooth function $\phi$ on $M$,
%
\begin{equation}
\label{tc22} \phi(z_{\tau})-\int_{0}^{\tau}
\biggl(\frac{\alpha
^{2}(u)}{2}[\Delta\phi](z_{u})+\theta(u)\bigl\langle
\nabla\phi(z_{u}),\dot{\gamma}_{x_{u},y_{u}}(\rho_{1,u})\bigr\rangle
\biggr) \,du
\end{equation}
is a martingale with respect to the filtration generated by $W_{1}$,
$W_{2}$ and $W_{3}$, where inside the integral, $\alpha(u)$ and
$\theta(u)$ are shorthand for $\alpha$ and $\theta$ evaluated at
$(u,x_{u},y_{u},\rho_{1,u},\rho_{2,u})$. In other words, $z_{\tau}$
is a time-changed Brownian motion (with the time change given by
$\alpha$) with a drift in the geodesic direction from $x_{\tau}$ to~$y_{\tau}$.
\end{teo}

\begin{pf}
The idea of the proof is to start with the generator of the diffusion
$(x,y,\rho_{1})$ and a function $\phi$ and look at the process $\phi
(z_{\tau})$. More precisely, we find the bounded variation part of
this. It is clear that, in terms of the generator (\ref{tc2}), we
need to compute the action of each term of this expression on $\phi
(\gamma_{x,y}(s))$. Notice that the part which involves derivatives of
$\rho_{2}$ simply drops out in this calculation.

For simplicity, we will drop the dependence on $\tau$, $x$ and $y$ in
the notation and let $l=d(x,y)$. Thus, the geodesic $\gamma_{x,y}$
will appear as $\gamma$ if we do not prescribe otherwise. Let $E$
denote the parallel vector field along $\gamma$ which is obtained by
parallel translation of $\xi_{1}$.

Before we start the proof, let us mention that all geodesics appearing
in this proof are geodesics run at unit speed.

Now we take the terms one by one. Again for simplicity in writing, we
use $s$ instead of $\rho_{1}$ as the parameter in the geodesic direction.
\begin{longlist}[(2)]
\item[(1)] We write the Laplacian term as
\[
\Delta_{x}\bigl[\phi\bigl(\gamma_{x,y}(s)\bigr)\bigr]=
\frac{d^{2}}{du^{2}}\phi\bigl(\gamma_{\eta_{1,u},y}(s)\bigr) +\frac
{d^{2}}{du^{2}}
\phi\bigl(\gamma_{\eta_{2,u},y}(s)\bigr),
\]
where $\eta_{1,u}$ and $\eta_{2,u}$ are geodesics starting at $x$ and
having derivatives given by $\dot{\eta}_{1,0}=\dot{\gamma
}_{x,y}(0)$ and $\dot{\eta}_{2,u}=\xi_{1}$. Then we continue with
%
\begin{eqnarray}
\label{tc7} \Delta_{x}\bigl[\phi\bigl(\gamma_{x,y}(s)
\bigr)\bigr]&=&\bigl\langle\operatorname{Hess}\phi\bigl(\gamma(s)\bigr)\dot
{\gamma}(s),
\dot{\gamma}(s) \bigr\rangle+\bigl\langle\operatorname{Hess}\phi\bigl(\gamma(s)
\bigr)J_{1}(s),J_{1}(s) \bigr\rangle\nonumber
\\
&&{} + \biggl\langle\nabla\phi\bigl(\gamma(s)\bigr),\frac
{D^{2}}{\partial
u^{2}}
\gamma_{\eta_{1,u},y}(s)\Big|_{u=0} \biggr\rangle
\\
&&{} + \biggl\langle\nabla
\phi\bigl(\gamma(s)\bigr),\frac{D^{2}}{\partial u^{2}}\gamma_{\eta_{2,u},y}(s)
\Big|_{u=0} \biggr\rangle,\nonumber
\end{eqnarray}
where $J_{1}$ is the Jacobi field along $\gamma$ given by
$J_{1}(s)=\frac{D}{du}\gamma_{\eta_{2,u},y}(s)\mid_{u=0}$, which can
also be characterized as the Jacobi field with the boundary conditions
$J_{1}(0)=\xi_{1}$ and $J_{1}(l)=0$ which is solved as $J_{1}(s)=w_{1}(s)E(s)$.

Now notice that the third term vanishes because $\gamma_{\eta
_{1,u}}(s)=\gamma(s+u)$ and $\gamma$ is a geodesic. Next,\vspace*{1pt} we look at
$\mathcal{K}(s)=\frac{D^{2}}{\partial u^{2}}\gamma_{\eta
_{2,u},y}(s)\mid_{u=0}$. Using (\ref{tc103}), we need to focus on
finding $\mathcal{H}$ now. Exploiting (\ref{tc19}), the equation for
$\mathcal{H}$ becomes
\[
\cases{ \ddot{\mathcal{H}}+r\mathcal{H}-r\langle\mathcal{H},\dot{\gamma
} \rangle
\dot{\gamma}+2r \bigl(\langle\dot{\gamma}, \dot{J}_{1}\rangle
J_{1}-2\langle J_{1},\dot{J}_{1}\rangle\dot{
\gamma} \bigr)=0,
\vspace*{3pt}\cr
\mathcal{H}(0)=0,
\vspace*{3pt}\cr
\mathcal{H}(l)=0.}
\]
Notice here that the boundary conditions follow from the fact that
$\eta_{2,u}$ is a geodesic and that $\gamma_{\eta_{2,u},y}(l(u))=y$,
where $l(u)$ is the length of the geodesic joining $\eta_{2,u}$ and~$y$.

Now, the Jacobi field $J_{1}$ is given by
\[
J_{1}(s)=w_{1}(s)E(s)
\]
and this in turn gives the equation of $\mathcal{H}$ as
\[
\cases{ \ddot{\mathcal{H}}+r\mathcal{H}-r\langle\mathcal{H},\dot{\gamma
} \rangle
\dot{\gamma}=4rw_{1}\dot{w}_{1}\dot{\gamma},
\vspace*{3pt}\cr
\mathcal{H}(0)=0,
\vspace*{3pt}\cr
\mathcal{H}(l)=0.}
\]
We solve this as
%
\begin{equation}
\label{tc20} \mathcal{H}=w_{1,0}\dot{\gamma}\qquad\mbox{with
}w_{1,0}(s)=2r\int_{0}^{s}w_{1}^{2}(
\sigma)\,d\sigma-\frac{2sr}{l}\int_{0}^{l}w_{1}^{2}(
\sigma)\,d\sigma.
\end{equation}

The conclusion is that
%
\begin{eqnarray}
\label{tce1} \Delta_{x}\bigl[\phi\bigl(\gamma_{x,y}(s)
\bigr)\bigr]&=&\bigl\langle\operatorname{Hess}\phi\bigl(\gamma(s)\bigr) \dot
{\gamma}(s),
\dot{\gamma}(s) \bigr\rangle\nonumber
\\
&&{} +w_{1}^{2}(s)\bigl\langle
\operatorname{Hess}\phi\bigl(\gamma(s)\bigr) E(s), E(s) \bigr\rangle
\\
&&{} + \biggl(w_{1,0}(s)+\frac{s}{l}\dot{w}_{1}(0) \biggr)
\bigl\langle\nabla\phi\bigl(\gamma(s)\bigr), \dot{\gamma}(s)\bigr
\rangle.\nonumber
\end{eqnarray}

\item[(2)] In the same vein, with very few changes, we can treat the next
term, which is the Laplacian $\Delta_{y}$ applied to $\phi(\gamma
(s))$. To this end, take $\eta_{1,u}$ a geodesic starting at $y$ with
initial speed given by $\dot{\gamma}(l)$, and $\eta_{2,u}$ a
geodesic starting at $y$ with initial speed $\xi_{2}=E(l)$ and write
\[
\Delta_{y}\bigl[\phi\bigl(\gamma_{x,y}(s)\bigr)\bigr]=
\frac{d^{2}}{du^{2}}\phi\bigl(\gamma_{x,\eta_{1,u}}(s)\bigr) +\frac
{d^{2}}{du^{2}}
\phi\bigl(\gamma_{x,\eta_{2,u}}(s)\bigr).
\]
Notice that $\gamma_{x,\eta_{1,u}}(s)=\gamma(s)$ for small $u$, and
thus the first derivative is $0$. Thus, we arrive at
%
\begin{eqnarray}
\label{tc7b} \Delta_{y}\bigl[\phi\bigl(\gamma(s)\bigr)\bigr]&=&
\bigl\langle\operatorname{Hess}\phi\bigl(\gamma(s)\bigr)J_{2}(s),J_{2}(s)
\bigr\rangle
\nonumber\\[-8pt]\\[-8pt]\nonumber
&&{} + \biggl\langle\nabla\phi\bigl(\gamma(s)\bigr),\frac
{D^{2}}{\partial u^{2}}
\gamma_{x,\eta
_{2,u}}(s)\Big|_{u=0} \biggr\rangle,
\end{eqnarray}
where $J_{2}$ is the Jacobi field which is $0$ at $0$ and $\xi_{2}$ at
$l$ which is exactly solved by $J_{2}(s)=w_{2}(s)E(s)$. The second term
in the equation above can be dealt with in a similar way to that
outlined above for $\Delta_{x}$. We skip the details and give the main
result. From (\ref{tc104}),
\[
\mathcal{K}(s)=\frac{D^{2}}{\partial u^{2}}\gamma_{x,\eta
_{2,u}}(s)\Big|_{u=0}=
\mathcal{H}-\frac{s}{l}\dot{w}_{2}(l)\dot{\gamma}(s).
\]
From (\ref{tc19}), the equation satisfied by $\mathcal{H}$ [with
$w_{2}$ given by (\ref{tc21})] is given by
\[
\cases{ \ddot{\mathcal{H}}+r\mathcal{H}-r\langle\mathcal{H},\dot{\gamma
} \rangle
\dot{\gamma}=4rw_{2}\dot{w}_{2}\dot{\gamma},
\vspace*{3pt}\cr
\mathcal{H}(0)=0,
\vspace*{3pt}\cr
\mathcal{H}(l)=0,}
\]
which is solved for
%
\begin{equation}
\label{tc20b} \mathcal{H}=w_{0,1}\dot{\gamma}\qquad\mbox{with
}w_{0,1}(s)=2r\int_{0}^{s}w_{2}^{2}(
\sigma)\,d\sigma-\frac{2sr}{l}\int_{0}^{l}w_{2}^{2}(
\sigma)\,d\sigma.
\end{equation}
Then we have
%
\begin{eqnarray}
\label{tce2} \Delta_{y}\bigl[\phi\bigl(\gamma_{x,y}(s)
\bigr)\bigr]&=&w_{2}^{2}(s)\bigl\langle\operatorname{Hess}\phi
\bigl(\gamma(s)\bigr) E(s), E(s) \bigr\rangle
\nonumber\\[-8pt]\\[-8pt]\nonumber
&&{} + \biggl(w_{0,1}(s)- \frac {s}{l}\dot{w}_{2}(l) \biggr)\bigl\langle\nabla\phi\bigl(
\gamma(s)\bigr), \dot{\gamma}(s)\bigr\rangle.
\end{eqnarray}

\item[(3)] For the next term, matters are fairly simple. Namely, because we
are differentiating with respect to the geodesic parameter $s$,
%
\begin{equation}
\label{tce3} \partial_{s}^{2}\bigl[\phi\bigl(\gamma(s)
\bigr)\bigr]=\bigl\langle\operatorname{Hess}\phi\bigl(\gamma(s)\bigr)\dot
{\gamma}(s),
\dot{\gamma}(s)\bigr\rangle.
\end{equation}

\item[(4)] Next in line is
%
\begin{equation}
\label{tce4} \dot{\gamma}_{x,y}(0)\dot{\gamma}_{y,x}(0)
\bigl[\phi\bigl(\gamma(s)\bigr)\bigr]=0
\end{equation}
because
\[
\dot{\gamma}_{y,x}(0)\bigl[\phi\bigl(\gamma_{x,y}(s)\bigr)
\bigr]=0,
\]
which follows from the fact that perturbing $y$ along a curve $\eta
_{2,u}$ in the geodesic direction of $\gamma_{x,y}$ yields that
$\gamma_{x,\eta_{2,u}}(s)=\gamma_{x,y}(s)$, and thus is independent
of $u$.

\item[(5)] Now we deal with
\[
\xi_{1}\xi_{2}\bigl[\phi\bigl(\gamma_{x,y}(s)
\bigr)\bigr].
\]
To this end, consider the geodesics $\eta_{1,u}$ and $\eta_{2,v}$
which start at $x$ (resp., $y$) and have the initial tangent vectors
$\xi_{1}$ (resp., $\xi_{2}$). What we need to compute is
\begin{eqnarray*}
\frac{D}{du}\frac{D}{dv}\bigl[\phi\bigl(\gamma_{x,y}(s)
\bigr)\bigr]\Big|_{u=v=0}&=&\bigl\langle\operatorname{Hess}\phi\bigl(\gamma
_{x,y}(s)\bigr)J_{1}(s),J_{2}(s) \bigr\rangle
\\
&&{} +
\biggl\langle\nabla\phi\bigl(\gamma_{x,y}(s)\bigr),\frac{D}{du}
\frac{D}{dv}\gamma_{\eta_{1,u},\eta
_{2,v}}(s)\Big|_{u=v=0} \biggr\rangle
\end{eqnarray*}
with $J_{1}=w_{1}E$ and $J_{2}=w_{2}E$. If we let
\[
\mathcal{K}(s)=\frac{D}{du}\frac{D}{dv}\gamma_{\eta_{1,u},\eta
_{2,v}}(s)
\Big|_{u=v=0},
\]
from (\ref{tc102}), we have $\mathcal{K}=\mathcal{H}-\frac
{s}{2l}(\dot{w}_{1}(l)-\dot{w}_{2}(0))\dot{\gamma}(s)$. Now, from
(\ref{tc18}), we obtain
\[
\cases{ \ddot{\mathcal{H}}+r\mathcal{H}-r\langle\mathcal{H},\dot{\gamma
} \rangle
\dot{\gamma}=2r(w_{1}\dot{w}_{2}+w_{2}
\dot{w_{1}})\dot{\gamma},
\vspace*{3pt}\cr
\mathcal{H}(0)=0,
\vspace*{3pt}\cr
\mathcal{H}(l)=0,}
\]
which we solve as
%
\begin{eqnarray}\label{tc23}
\mathcal{H} =w_{1,1}\dot{\gamma}
\nonumber\\[-8pt]\\[-8pt]
\eqntext{\displaystyle\mbox{with }w_{1,1}(s)=2r\int_{0}^{s}w_{1}(
\sigma)w_{2}(\sigma)\sigma-\frac{2sr}{l}\int_{0}^{l}w_{1}(
\sigma)w_{2}(\sigma)\,d\sigma.}
\end{eqnarray}

We conclude that
%
\begin{eqnarray}\label{tce5}
\qquad \xi_{1}\xi_{2}\bigl[\phi\bigl(
\gamma_{x,y}(s)\bigr)\bigr]&=&w_{1}(s)w_{2}(s)\bigl
\langle\operatorname{Hess}\phi\bigl(\gamma(s)\bigr)E(s),E(s) \bigr\rangle
\nonumber\\[-8pt]\\[-8pt]\nonumber
&&{} + \biggl(w_{1,1}(s)-\frac{s}{2l}\bigl(\dot{w}_{1}(l)-
\dot{w}_{2}(0)\bigr) \biggr)\bigl\langle\nabla\phi\bigl(\gamma(s)
\bigr),\dot{\gamma}(s) \bigr\rangle.
\end{eqnarray}

\item[(6)] Next is
\begin{eqnarray*}
\dot{\gamma}_{x,y}(0)\partial_{s}\bigl[\phi\bigl(
\gamma_{x,y}(s)\bigr)\bigr]&=&\dot{\gamma}(0)\bigl\langle\nabla\phi
\bigl(\gamma(s)\bigr), \dot{\gamma}(s) \bigr\rangle= \bigl\langle
\operatorname{Hess}
\phi\bigl(\gamma(s)\bigr)\dot{\gamma}(s),\dot{\gamma}(s) \bigr\rangle.
\end{eqnarray*}

\item[(8)] Now,
%
\begin{equation}
\label{tce7} \dot{\gamma}_{y,x}(0)\partial_{s}\bigl[\phi
\bigl(\gamma_{x,y}(s)\bigr)\bigr]=0,
\end{equation}
as can be easily seen from the fact that perturbing $y$ in the geodesic
direction (say, along $\eta_{v}$) reveals that $\gamma_{x,\eta
_{v}}(s)=\gamma_{x,y}(s)$, and thus the derivative with respect to $v$
vanishes.

\item[(8)] The last term is easy to deal with and gives
%
\begin{equation}
\label{tce8} \partial_{s}\bigl[\phi\bigl(\gamma_{x,y}(s)
\bigr)\bigr]=\bigl\langle\nabla\phi\bigl(\gamma(s)\bigr), \dot{\gamma
}(s) \bigr
\rangle.
\end{equation}
\end{longlist}

Putting together all the results from (\ref{tce1})--(\ref{tce8}) and
using that $w_{2}(s)=w_{1}(l-s)$, we arrive at
%
\begin{eqnarray}
\mathcal{L}\bigl[\phi\bigl(\gamma(s)\bigr)\bigr]&=&\frac{\alpha
^{2}}{2}\bigl\langle
\operatorname{Hess}\phi\bigl(\gamma(s)\bigr)\dot{\gamma}(s),\dot{\gamma}(s)
\bigr
\rangle\nonumber
\\
&&{} +\frac{(aw_{1}(s)+bw_{2}(s))^{2}}{2}\bigl\langle\operatorname{Hess}\phi\bigl(\gamma(s)
\bigr)E(s),E(s) \bigr\rangle
\nonumber\\[-8pt]\\[-8pt]\nonumber
&&{}+ \biggl(\beta+\frac{s}{l} \biggl(\frac{a^{2}+b^{2}}{2}\dot
{w}_{1}(0)-ab \dot{w}_{1}(l) \biggr)
\\
&&\hspace*{16pt}\hspace*{17pt}{} +\frac
{a^{2}w_{1,0}+b^{2}w_{0,1}+2abw_{1,1}}{2}
\biggr)\bigl\langle\nabla\phi\bigl(\gamma(s)\bigr),\dot{\gamma}(s)
\bigr\rangle.\nonumber
\end{eqnarray}
A little simplification follows from
\begin{eqnarray*}
&& a^{2}w_{1,0}+b^{2}w_{0,1}+2abw_{1,1}
\\
&&\qquad = 2r
\biggl( \int_{0}^{s}\bigl(a(s)w_{1}(
\tau)+b(s)w_{2}(\tau)\bigr)^{2}\,d\tau
\\
&&\hspace*{48pt}{} -\frac{s}{l}\int
_{0}^{l}\bigl(a(s)w_{1}(
\tau)+b(s)w_{2}(\tau)\bigr)^{2}\,d\tau\biggr)
\end{eqnarray*}
which then gives the result for the choice of $\alpha$ as in (\ref{tc24}).
\end{pf}

We close this section with the following result summarizing all of the
important findings of this section which is used in the next section.

%
\begin{Cor}\label{tcc}
Assume that the entries of (\ref{tc1}) satisfy
%
\begin{eqnarray}\label{tc40}
\cases{ a\mbox{ is symmetric in }\rho_{1}\mbox{ and }
\rho_{2},
\vspace*{3pt}\cr
b=a,
\vspace*{3pt}\cr
\tilde{\alpha}=-\alpha,
\vspace*{3pt}\cr
\alpha(\tau,x,y,
\rho_{1},\rho_{2})=a(\tau,x,y,\rho_{1},\rho
_{2})w(\rho_{1}),
\vspace*{3pt}\cr
\beta(\tau,x,y,\rho_{1},
\rho_{2})=\frac{1}{2}a^{2}(\tau,x,y,\rho
_{1},\rho_{2}) \bigl(w(\rho_{1})\dot{w}(
\rho_{1})-\dot{w}(0)\bigr),
\vspace*{3pt}\cr
\tilde{\beta}(\tau,x,y,
\rho_{1},\rho_{2})=\frac{1}{2}a^{2}(\tau,x,y,\rho_{1},\rho_{2}) \bigl
(w(\rho_{2})
\dot{w}(\rho_{2})-\dot{w}(0)\bigr),
\vspace*{3pt}\cr
\rho_{1,0}=
\rho_{2,0}=\rho_{0}/2}
\nonumber\\[-8pt]\\[-8pt]
\eqntext{\displaystyle \mbox{with } \cases{
\ddot{w}+rw=0,
\vspace*{3pt}\cr
w(0)=1,
\vspace*{3pt}\cr
w\bigl(d(x,y)\bigr)=1.}}
\end{eqnarray}
Then:
\begin{longlist}[(2)]
\item[(1)]$\rho_{1,\tau}+\rho_{2,\tau}=\rho_{\tau}$ almost surely.
\item[(2)] The diffusions $(x_{\tau},y_{\tau},\rho_{1,\tau},\rho
_{2,\tau})$ and $(x_{\tau},y_{\tau},\rho_{2,\tau},\rho_{1,\tau
})$ have the same law. In particular, $(x_{\tau},y_{\tau},\rho
_{1,\tau})$ and $(x_{\tau},y_{\tau},\rho_{2,\tau})$ have the same law.
\item[(3)] If $z_{\tau}=\gamma_{x_{\tau},y_{\tau}}(\rho_{1,\tau})$,
then for any smooth function $\phi$ on $M$,
%
\begin{equation}
\label{tc22b} \phi(z_{\tau})-\int_{0}^{\tau}
\biggl(\frac{\alpha
^{2}(u)}{2}[\Delta\phi](z_{u})+\theta(u)\bigl\langle
\nabla\phi(z_{u}),\dot{\gamma}_{x_{u},y_{u}}(\rho_{1,u})
\bigr\rangle\biggr) \,du
\end{equation}
is a martingale with respect to the filtration generated by $W_{1}$,
$W_{2}$ and $W_{3}$, where
\begin{eqnarray*}
&& \theta(\tau,x,y,\rho_{1},\rho_{2})
\\
&&\qquad = \beta(\tau,x,y,
\rho_{1},\rho_{2})
\\
&&\qquad\quad{}+a^{2}(\tau,x,y,\rho_{1},\rho_{2})
\\
&&\qquad\qquad{}\times \biggl(
\frac{\rho
_{1}}{d(x,y)}\dot{w}(0)+r \biggl( \int_{0}^{\rho_{1}}w^{2}(
\sigma)\,d\sigma-\frac{\rho_{1}}{d(x,y)}\int_{0}^{d(x,y)}w^{2}(
\sigma)\,d\sigma\biggr) \biggr).
\end{eqnarray*}
\end{longlist}
\end{Cor}

A word is in place here. The statement of Theorem~\ref{tct2} requires
the symmetry of $\alpha$ with respect to $\rho_{1}$ and $\rho_{2}$.
This is not satisfied by the choice in (\ref{tc40}) for arbitrary
$\rho_{1}$ and $\rho_{2}$. However, because of the choice of $\beta$
and $\tilde{\beta}$ and Theorem~\ref{tct1}, we know that (almost
surely) $\rho_{1,\tau}+\rho_{2,\tau}=\rho_{\tau}$. So it suffices
to ensure the symmetry of $\alpha$ and $\tilde{\alpha}$ with respect
to $\rho_{1}$ and $\rho_{2}$ only in the case that $\rho_{1}+\rho
_{2}=\rho=d(x,y)$, which follows from the fact that $w(s)=w(d(x,y)-s)$
for $s\in[0,d(x,y)]$.

For a given $l$, the solution $w$ to (\ref{tc40}) is
%
\begin{equation}
\label{tcws} w(s)=\cases{ 1, &\quad $r=0$,
\vspace*{3pt}\cr
\displaystyle\frac{\cosh((l-2s)/2 )}{\cosh(l/2)}, &\quad$r=-1$,
\vspace*{3pt}\cr
\displaystyle\frac{\cos((l-2s)/2 )}{\cos(l/2)}, &\quad$r=1$.}
\end{equation}
In particular, if $l$ is small snough, $w(s)$ and all its derivatives
stay bounded. In addition to this $\dot{w}(0)=O(l)$, a property which
will play an important role in the coming section. Thus, if $a$ is a
bounded function, then
%
\begin{equation}
\label{tc110} \sup_{\tau\in[0,d(x,y)]}\theta(\tau, x, y,
\rho_{1},\rho_{2})=O(\rho_{1}).
\end{equation}

\section{Estimates on the Hessian decay for \texorpdfstring{$\chi(M)\le0$}{$chi(M)<=0$}}\label{s9}

For Euler characteristic less than or equal to $0$, we know that $\bar
{p}_{t}$ and $\nabla\bar{p}_{t}$ decay exponentially fast. Our goal
is now to extend this to the Hessian of $\bar{p}_{t}$, resulting in
the convergence of the metric to the constant curvature metric in
$C^2$. In particular, the curvature converges to a constant.

To estimate the Hessian decay, we proceed in a similar way to the
estimation of the gradient, only that now we need to use the coupling
procedure for three points rather than two.

Let us denote, for $t>0$,
\[
H(t)=\sup_{x\in M} \bigl\llvert\operatorname{Hess}
\bar{p}_{t}(x)\bigr\rrvert.
\]
What we want to show is that $H(t)$ decays to $0$ exponentially fast.

%
\begin{teo}\label{het}
For the case $\chi(M)\le0$, $H(t)$ converges to $0$ exponentially
fast as $t\to\infty$.
\end{teo}

\begin{pf}
To begin with, notice that
%
\begin{equation}
\label{he0} \bigl\langle\operatorname{Hess} \bar{p}_{t}(z)\xi,\xi\bigr
\rangle=\lim_{\rho
_{0}\to0}\frac{\bar{p}_{t}(\gamma(-\rho_{0}))-2\bar{p}_{t}(z)+\bar
{p}_{t}(\gamma(\rho_{0}))}{\rho_{0}^{2}},
\end{equation}
where $\gamma$ is the unique geodesic passing through $z$ and having
the initial velocity given by $\xi$. Thus, similarly to the case of
the gradient estimate, we will use the three particle coupling to get a
handle on the right-hand side of the above quantity, for sufficiently
small $\rho_{0}$.

For convenience, fix a time $t>0$ and let $s\in[0,1\wedge t]$. Pick
two points $x,y\in M$, with $d(x,y)=\rho_{0}$ small enough, and let
$z$ be the middle point on the geodesic between $x$ and $y$ such that
$d(x,z)=d(z,y)=\rho_{0}/2$. Consider the triple coupling described by
(\ref{tc1}) with the choices from Corollary~\ref{tcc}. All the data
there is completely described by the choice of the time change $a$ of
the processes $x_{\tau}$ and $y_{\tau}$. In this section, we choose
%
\begin{equation}
\label{her1} a(\tau,x,y,\rho_{1},\rho_{2})=
\sqrt{2}e^{-\bar{p}_{t-\tau
}(\lambda_{x,y})},
\end{equation}
where $\lambda_{x,y}$ is the middle point on the geodesic between $x$
and $y$. This choice does not depend on $\rho_{1}$ or $\rho_{2}$, and
consequently it is symmetric in $\rho_{1}$ and $\rho_{2}$, as
required by Corollary~\ref{tcc}. Other choices are possible for the
argument here, but we stick with this one because it is symmetric with
respect to $x$ and $y$ and makes some of the estimates look more natural.

Now, we consider $\bar{p}_{t-\OldTau}(z_{\OldTau})$, where $z_{\tau
}$ is defined in the previous section. Again invoking Corollary~\ref
{tcc}, we learn that
\begin{eqnarray*}
&& d\bar{p}_{t-\OldSigma}(z_{\OldSigma})
\\
&&\qquad =M_{1,\tau}+ \biggl(-\partial
_{t}\bar{p}_{t-\OldSigma}(z_{\OldSigma})+\frac{\alpha^{2}(\tau
)}{2}
\Delta\bar{p}_{t-\OldSigma}(z_{\OldSigma})+\theta(\tau)\bigl\langle
\nabla
\bar{p}_{t-\tau}(z_\tau),\dot{\gamma}_{\tau} \bigr
\rangle\biggr)\,d\OldSigma,
\end{eqnarray*}
where $M_{1,\tau}$ is a martingale. From the Ricci flow equation,
$\partial_{t}\bar{p}_{t-\OldSigma}(z_{\OldSigma})=  e^{-2\bar
{p}_{t-\OldSigma}(z_{\OldSigma})}\Delta\bar{p}_{t-\OldSigma
}(z_{\OldSigma})+r(1-e^{-2\bar{p}_{t-\OldSigma}(z_{\OldSigma})})$
so we continue with
%
\begin{eqnarray}\label{he2-1}
&& d\bar{p}_{t-\OldSigma}(z_{\OldSigma})\nonumber
\\
&&\qquad =M_{1,\tau}+
\biggl( \biggl(\frac{\alpha^{2}(\OldSigma)}{2}-e^{-2\bar{p}_{t-\OldSigma
}(z_{\OldSigma})} \biggr)\Delta
\bar{p}_{t-\OldSigma}(z_{\OldSigma
})
\\
&&\hspace*{74pt}{} +\theta(\tau)\bigl\langle\nabla
\bar{p}_{t-\tau}(z_\tau),\dot{\gamma}_{\tau} \bigr
\rangle-r\bigl(1-e^{-2\bar{p}_{t-\OldSigma
}(z_{\OldSigma})}\bigr) \biggr)\,d\OldSigma.\nonumber
\end{eqnarray}

For the semi-martingale $\bar{p}_{t-\OldSigma}(x_{\OldSigma})$ we
have from (\ref{tc1}) and the Ricci flow equation that
%
\begin{eqnarray}\label{he2-2}
d\bar{p}_{t-\OldSigma}(x_{\OldSigma})
&=& M_{2,\tau}+
\bigl(\bigl(e^{-2\bar{p}_{t-\OldSigma}(\lambda_{\tau})}-e^{-2\bar
{p}_{t-\OldSigma
}(x_{\OldSigma})}\bigr)\Delta\bar{p}_{t-\OldSigma}(x_{\OldSigma})
\nonumber\\[-8pt]\\[-8pt]\nonumber
&&\hspace*{113pt}{} - r\bigl(1-e^{-2\bar{p}_{t-\OldSigma}(x_{\OldSigma})}\bigr) \bigr
)\,d\OldSigma,
\end{eqnarray}
where $\lambda_{\tau}$ is the middle point of the geodesic joining
$x_{\tau}$ and $y_{\tau}$. Similarly, for $\bar{p}_{t-\OldSigma
}(y_{\OldSigma})$,
%
\begin{eqnarray}\label{he2-3}
d\bar{p}_{t-\OldSigma}(y_{\OldSigma})&=&M_{3,\tau}+
\bigl(\bigl(e^{-2\bar{p}_{t-\OldSigma}(\lambda_{\tau})}-e^{-2\bar
{p}_{t-\OldSigma
}(y_{\OldSigma})}\bigr)\Delta\bar{p}_{t-\OldSigma}(y_{\OldSigma})
\nonumber\\[-8pt]\\[-8pt]\nonumber
&&\hspace*{113pt}{} -
r\bigl(1-e^{-2\bar{p}_{t-\OldSigma}(y_{\OldSigma})}\bigr) \bigr
)\,d\OldSigma.
\end{eqnarray}

Now, putting these together,
%
\begin{eqnarray}\label{he3}
&& \bar{p}_{t-\OldSigma}(x_{\OldSigma})-2
\bar{p}_{t-\OldSigma
}(z_{\OldSigma})+\bar{p}_{t-\OldSigma}(y_{\OldSigma})\nonumber
\\
&&\qquad = \bar{p}_{t}(x)-2\bar{p}_{t}(z)+\bar{p}_{t}(y)+M_{\OldSigma}\nonumber
\\
&&\quad\qquad{} -2\int_{0}^{\tau} \biggl( \biggl(
\frac{\alpha^{2}(u)}{2}- e^{-2\bar{p}_{t-u}(z_{u})} \biggr)\Delta\bar
{p}_{t-u}(z_{u})
\biggr)\,du\nonumber
\\
&&\quad\qquad{}-2 \int_{0}^{\tau}\theta(u)\bigl\langle\nabla
\bar{p}_{t-u}(u),\dot{\gamma}_{u} \bigr\rangle \,du
\\
&&\quad\qquad{}+\int_{0}^{\OldSigma} \bigl(\bigl(e^{-2\bar{p}_{t-u}(\lambda
_{u})}-e^{-2\bar{p}_{t-u}(x_{u})}
\bigr)\Delta\bar{p}_{t-u}(x_{u})\nonumber
\\
&&\hspace*{30pt}\quad\qquad{} + \bigl(e^{-2\bar{p}_{t-u}(\lambda
_{u})}-e^{-2\bar{p}_{t-u}(y_{u})}
\bigr)\Delta\bar{p}_{t-u}(y_{u}) \bigr)\,du\nonumber
\\
&&\quad\qquad{}+r\int_{0}^{\OldSigma} \bigl(e^{-2\bar{p}_{t-u}(x_{u})}-2e^{-2\bar
{p}_{t-u}(z_{u})}+e^{-2\bar{p}_{t-u}(y_{u})}
\bigr)\,du,\nonumber
\end{eqnarray}
where $M_{\tau}$ is a martingale.

From the definition of $\alpha$ in Corollary~\ref{tcc} and the fact
that we stop the processes before the distance between $x$ and $y$ hits
some small number $r_{0}$, it is not hard to prove [e.g.,
directly from (\ref{tcws})] that there is a constant $C>0$ such that
\[
\bigl\llvert\alpha(u)-a(u)\bigr\rrvert\le C\rho_{u},
\]
which in turn, using the gradient decay estimates and the fact that
$d(z_{u},\lambda_{u})\le d(x_{u},y_{u})/2=\rho_{u}/2$, leads to
(notice that $t-u\ge t-1$ because $u\in[0,1\wedge t]$)
\begin{eqnarray*}
\biggl\llvert\frac{\alpha^{2}(u)}{2}- e^{-2\bar{p}_{t-u}(z_{u})} \biggr
\rrvert&\le& C \rho_{u}+\bigl\llvert e^{-2\bar{p}_{t-u}(z_{u})}-e^{-2\bar
{p}_{t-u}(\lambda_{u})}\bigr\rrvert
\\
&\le& C\rho_{u}+Ce^{-C t}\rho_{u}\le C
\rho_{u}.
\end{eqnarray*}
Observe here that we do not need the full power of the exponential
decay of the gradient; just the boundedness suffices for this
particular estimate, but used in conjunction with (\ref{tc110}), for
any $u\in[0,1\wedge t]$,
\[
\bigl\llvert\theta(u)\bigl\langle\nabla\bar{p}_{t-u}(z_{u}),
\dot{\gamma}_{u} \bigr\rangle\bigr\rrvert\le c\rho_{u}e^{-C t}.
\]
Finally, from the exponential decay of the gradient and elementary
arguments, as long as $u\in[0,1\wedge t]$,
\[
\bigl\llvert e^{-2\bar{p}_{t-u}(x_{u})}-e^{-2\bar{p}_{t-u}(z_{u})} \bigr
\rrvert+\bigl\llvert
e^{-2\bar{p}_{t-u}(y_{u})}-e^{-2\bar{p}_{t-u}(z_{u})} \bigr\rrvert\le
c\rho_{u}e^{-C t}
\]
and also
\[
\bigl\llvert e^{-2\bar{p}_{t-u}(x_{u})}-e^{-2\bar{p}_{t-u}(\lambda
_{u})}\bigr\rrvert+\bigl\llvert
e^{-2\bar{p}_{t-u}(y_{u})}-e^{-2\bar{p}_{t-u}(\lambda_{u})}\bigr\rrvert
\le c\rho_{u}e^{-C t}.
\]

Now, let $\sigma$ be the first time $u$ when $\rho_{1,u}$ or $\rho
_{2,u}$ becomes $0$, and let $\zeta$ be the first time $u$ when either
$\rho_{1,u}$ or $\rho_{2,u}$ hits $r_{0}$, a small number (less than
half of the injectivity radius). Replacing $\OldSigma$ by $\OldSigma
\wedge\OldTau\wedge\zeta$ in (\ref{he3}) and then taking the
expectation at $\OldSigma=0$ and $\OldSigma=s$, combined with the
above estimates, lead to
%
\begin{eqnarray}
\label{he4}
\qquad&& \bigl\llvert\bar{p}_{t}(x)-2\bar{p}_{t}(z)+
\bar{p}_{t}(y)\bigr\rrvert\nonumber
\\
&&\qquad \le \bigl\llvert\E\bigl[\bar{p}_{t-s\wedge
\OldTau\wedge\zeta}(x_{s\wedge\OldTau\wedge\zeta
})-2\bar{p}_{t-s\wedge\OldTau\wedge\zeta}(z_{s\wedge\OldTau
\wedge\zeta})+
\bar{p}_{t-s\wedge\OldTau\wedge\zeta}(y_{s\wedge
\OldTau\wedge\zeta})\bigr]\bigr\rrvert
\\
&&\quad\qquad{}+ce^{-Ct}\E\biggl[\int_{0}^{s\wedge\sigma\wedge\zeta}\rho
_{u}\,du \biggr]+c\E\biggl[\int_{0}^{s\wedge\OldTau\wedge\zeta}
\rho_{u}H(t-u)\,du \biggr]\nonumber
\end{eqnarray}
for any $s\in[0,1\wedge t]$.

Next, the stopping time $\OldTau$ is $T_{1}\wedge T_{2}$, where
$T_{1}$ and $T_{2}$ are, respectively, the first time $\rho_{1}$ hits
$0$ and the first time $\rho_{2}$ hits $0$. Now we can write
%
\begin{eqnarray}
\label{he4100} &&\E\bigl[\bar{p}_{t-s\wedge\OldTau\wedge\zeta}(x_{s\wedge
\OldTau \wedge\zeta})-2
\bar{p}_{t-s\wedge\OldTau\wedge\zeta}(z_{s\wedge
\OldTau\wedge\zeta})+\bar{p}_{t-s\wedge\OldTau\wedge\zeta
}(y_{s\wedge\OldTau\wedge\zeta})
\bigr]\nonumber
\\
&&\qquad = \E\bigl[\bar{p}_{t-s\wedge\zeta}(x_{s\wedge\zeta})-2\bar
{p}_{t-s\wedge\zeta}(z_{s\wedge\zeta})+
\bar{p}_{t-s\wedge\zeta
}(y_{s\wedge\zeta}),\zeta<\OldTau\bigr]\nonumber
\\
&&\quad\qquad{} + \E\bigl[\bar{p}_{t-s\wedge\OldTau}(x_{s\wedge\OldTau})-2\bar
{p}_{t-s\wedge\OldTau}(z_{s\wedge\OldTau})+
\bar{p}_{t-s\wedge
\OldTau}(y_{s\wedge\OldTau}),\OldTau\le\zeta\bigr]\nonumber
\\
&&\qquad =\E\bigl[\bar{p}_{t-s\wedge\zeta}(x_{s\wedge\zeta})-2\bar{p}_{t-s\wedge
\zeta}(z_{s\wedge\zeta})+
\bar{p}_{t-s\wedge\zeta
}(y_{s\wedge\zeta}),\zeta<\OldTau\bigr]\nonumber
\\
&&\quad\qquad{}+\E\bigl[\bar{p}_{t-T_{1}}(y_{T_{1}})-\bar{p}_{t-T_{1}}(x_{T_{1}}),T_{1}<
T_{2}\le s\wedge\zeta\bigr]
\\
&&\quad\qquad{} +\E\bigl[\bar{p}_{t-T_{2}}(x_{T_{2}})-
\bar{p}_{t-T_{2}}(y_{T_{2}}),T_{2}< T_{1}\le s
\wedge\zeta\bigr]\nonumber
\\
&&\quad\qquad{} +\E\bigl[\bar{p}_{t-T_{1}}(y_{T_{1}})-\bar{p}_{t-T_{1}}(x_{T_{1}}),T_{1}
\le s\wedge\zeta< T_{2}\bigr]\nonumber
\\
&&\quad\qquad{} +\E\bigl[\bar{p}_{t-T_{2}}(x_{T_{2}})-
\bar{p}_{t-T_{2}}(y_{T_{2}}),T_{2}\le s\wedge\zeta<
T_{1}\bigr]\nonumber
\\
&&\quad\qquad{} +\E\bigl[\bar{p}_{t-s}(x_{s})-2\bar{p}_{t-s}(z_{s})+
\bar{p}_{t-s}(y_{s}),s\le\OldTau<\zeta\bigr].\nonumber
\end{eqnarray}
Here, we bear to fruit the work done in the previous section and argue
that due to the symmetry with respect to $\rho_{1}$ and $\rho_{2}$
from Corollary~\ref{tcc}, we have the crucial cancellations
%
\begin{eqnarray}\label{he4-2}
&& \E\bigl[\bar{p}_{t-T_{1}}(y_{T_{1}})-
\bar{p}_{t-T_{1}}(x_{T_{1}}),T_{1}< T_{2}\le s
\wedge\zeta\bigr]
\nonumber\\[-8pt]\\[-8pt]\nonumber
&&\qquad{} +\E\bigl[\bar{p}_{t-T_{2}}(x_{T_{2}})-\bar
{p}_{t-T_{2}}(y_{T_{2}}),T_{2}< T_{1}\le s
\wedge\zeta\bigr] =0
\end{eqnarray}
and also
%
\begin{eqnarray}\label{he4-3}
&& \E\bigl[\bar{p}_{t-T_{1}}(y_{T_{1}})-\bar
{p}_{t-T_{1}}(x_{T_{1}}),T_{1}\le s\wedge\zeta<
T_{2}\bigr]
\nonumber\\[-8pt]\\[-8pt]\nonumber
&&\qquad{} +\E\bigl[\bar{p}_{t-T_{2}}(x_{T_{2}})-
\bar{p}_{t-T_{2}}(y_{T_{2}}),T_{2}\le s\wedge\zeta<
T_{1}\bigr] =0.
\end{eqnarray}

Furthermore, from the exponential decay of $\bar{p}$ and $\nabla\bar
{p}$, for any $s\in[0,1\wedge t]$ we have
\begin{eqnarray*}
&&\bigl\llvert\E\bigl[\bar{p}_{t-s\wedge\OldTau\wedge\zeta}(x_{s\wedge
\OldTau
\wedge\zeta})-2
\bar{p}_{t-s\wedge\OldTau\wedge\zeta}(z_{s\wedge
\OldTau\wedge\zeta})+\bar{p}_{t-s\wedge\OldTau\wedge\zeta
}(y_{s\wedge\OldTau\wedge\zeta})
\bigr]\bigr\rrvert
\\
&&\qquad \le\E\bigl[\bigl\llvert\bar{p}_{t-s\wedge\zeta}(x_{s\wedge\zeta
})-2\bar{p}_{t-s\wedge\zeta}(z_{s\wedge\zeta})+\bar{p}_{t-s\wedge\zeta
}(y_{s\wedge\zeta})
\bigr\rrvert,\zeta<\OldTau\bigr]
\\
&&\quad\qquad{} + \E\bigl[\bigl\llvert\bar{p}_{t-s}(x_{s})-2
\bar{p}_{t- s}(z_{s})+\bar{p}_{t-s}(y_{s})
\bigr\rrvert,s\le\OldTau\le\zeta\bigr]
\\
&&\qquad \le ce^{-Ct}\Prob(\zeta\le s\wedge\OldTau)+ce^{-Ct}\E[\rho
_{s},s\le\OldTau\wedge\zeta],
\end{eqnarray*}
where we used the following inequalities:
\begin{eqnarray*}
&& \bigl\llvert\E\bigl[\bar{p}_{t-s\wedge\zeta} (x_{s\wedge\zeta})-2\bar
{p}_{t-s\wedge\zeta}(z_{s\wedge\zeta})+\bar{p}_{t-s\wedge\zeta
}(y_{s\wedge\zeta}),
\zeta<\OldTau\bigr]\bigr\rrvert
\\
&&\qquad \le\E\bigl[\bigl\llvert\bar{p}_{t-s}(x_{s})-2
\bar{p}_{t-s}(z_{s})+\bar{p}_{t-s}(y_{s})
\bigr\rrvert,s<\zeta<\OldTau\bigr]
\\
&&\quad\qquad{} +\E\bigl[\bigl\llvert\bar
{p}_{t-\zeta}(x_{\zeta
})-2\bar{p}_{t-\zeta}(z_{\zeta})+
\bar{p}_{t-\zeta}(y_{\zeta
})\bigr\rrvert,\zeta\le s\wedge\OldTau
\bigr]
\\
&&\qquad \le ce^{-Ct}\E[\rho_{s},s<\OldTau\wedge
\zeta]+ce^{-Ct}\Prob(\zeta\le s\wedge\OldTau).
\end{eqnarray*}
Putting these together into (\ref{he4}), plus a little
simplification, gives that for any $s\in[0,1\wedge t]$
\begin{eqnarray*}
&& \bigl\llvert\bar{p}_{t}(x)-2\bar{p}_{t}(z)+
\bar{p}_{t}(y)\bigr\rrvert
\\
&&\qquad \le ce^{-Ct}\Prob(\zeta\le s
\wedge\OldTau)+ce^{-C t}\E[\rho_{s},s\le\OldTau\wedge\zeta]
\\
&&\quad\qquad{} +ce^{-Ct}\int_{0}^{s}\E[
\rho_{u},u\le\OldTau\wedge\zeta]\,du+c\int_{0}^{s}H(t-u)
\E[\rho_{u},u\le\OldTau\wedge\zeta]\,du.
\end{eqnarray*}
A further simplification is due to the symmetry with respect to $\rho
_{1}$ and $\rho_{2}$ from Corollary~\ref{tcc}, which has the effect that
\[
\E[\rho_{u},u<\OldTau\wedge\zeta]=2\E[\rho_{1,u},u<\OldTau
\wedge\zeta],
\]
and thus for $s\in[0,1\wedge t]$,
%
\begin{eqnarray}\label{he5}
&& \bigl\llvert\bar{p}_{t}(x)-2\bar{p}_{t}(z)+
\bar{p}_{t}(y)\bigr\rrvert\nonumber
\\
&&\qquad \le Ce^{-Ct}\Prob(\zeta<s\wedge
\OldTau)+Ce^{-C t}\E[\rho_{1,s},s<\OldTau\wedge\zeta]
\nonumber\\[-8pt]\\[-8pt]\nonumber
&&\quad\qquad{}+e^{-Ct}\int_{0}^{s}\E[
\rho_{1,u},u<\OldTau\wedge\zeta]\,du
\\
&&\quad\qquad{} +C\int_{0}^{s}H(t-u)
\E[\rho_{1,u},u<\OldTau\wedge\zeta]\,du.\nonumber
\end{eqnarray}
The key step forward is the following result.

\begin{teo}\label{hep1}
Let $W^{1}$, $W^{2}$ and $W^{3}$ be three independent, one-dimensional
Brownian motions, and let $\tilde{\rho}_{1}$ and $\tilde{\rho}_{2}$
be two processes such that $\tilde{\rho}_{1,0}=\tilde{\rho
}_{2,0}=\tilde{\rho}_{0}>0$ and
%
\begin{equation}
\label{he6} \cases{ d\tilde{\rho}_{1,\OldSigma}=\bigl(1+O(\tilde{
\rho}_{1,\OldSigma
})\bigr) \bigl(A_{\tau}\,dW^{1}_{\OldSigma}+B_{\tau}\,dW^{2}_{\OldSigma
}
\bigr)
\vspace*{3pt}\cr
\hspace*{36pt}{} +\bigl(1+O(\tilde{\rho}_{1,\OldSigma})\bigr)\,dW^{3}_{\OldSigma
}+O(1)\,d
\OldSigma,
\vspace*{3pt}\cr
d\tilde{\rho}_{2,\OldSigma}=\bigl(1+O(\tilde{
\rho}_{2,\OldSigma
})\bigr) \bigl(A_{\tau}\,dW^{1}_{\OldSigma}+B_{\tau}\,dW^{2}_{\OldSigma
}
\bigr)
\vspace*{3pt}\cr
\hspace*{36pt}{}-\bigl(1+O(\tilde{\rho}_{2,\OldSigma})\bigr)\,dW^{3}_{\OldSigma
}+O(1)\,d
\OldSigma,}
\end{equation}
with $A^{2}_{\tau}+B^{2}_{\tau}=1$.

Let $\tilde{\OldTau}$ be the first hitting time of $0$ for the
process $\tilde{\rho}_{1}\tilde{\rho}_{2}$ and $\tilde{\zeta}$
the first time either $\tilde{\rho}_{1}$ or $\tilde{\rho}_{2}$ hits
some value $\tilde{r}_{0}$. Assume that (\ref{he6}) is valid for
$\OldSigma\in[0,\tilde{\OldTau}\wedge\tilde{\zeta}]$, and in
addition that for some constant $C>0$
%
\begin{equation}
\label{he7} \E[\tilde{\rho}_{2,s},s<\tilde{\OldTau}\wedge\tilde{
\zeta}]\le C\E[\tilde{\rho}_{1,s},s<\tilde{\OldTau}\wedge\tilde{\zeta}]
\qquad\mbox{for all }s\in[0,1\wedge t].
\end{equation}
Then there is a constant $C>0$ such that, for all $s\in[0,1\wedge t]$
and sufficiently small $\tilde{\rho}_{0}>0$,
%
\begin{equation}
\label{he20} \E[\tilde{\rho}_{1,s},s<\tilde{\OldTau}\wedge\tilde{
\zeta}]\le C\tilde{\rho}_{0}^{2}/\sqrt{s}
\end{equation}
and
%
\begin{equation}
\label{he21} \Prob(\tilde{\zeta}<s\wedge\tilde{\OldTau})\le C\tilde{
\rho}^{2}_{0}.
\end{equation}
\end{teo}

\begin{pf}
If we regard the process $(\tilde{\rho}_{1,\OldSigma},\tilde{\rho
}_{2,\OldSigma})$ as a process in the first quadrant, the equations in
(\ref{he6}) tell us that near the axes the process is near $\sqrt
{2}$ times a two-dimensional Brownian motion which certainly satisfies
both properties (\ref{he20}) and (\ref{he21}). Consequently, what
we will do is to compare $\E[\tilde{\rho}_{1,s},s<\tilde{\OldTau
}]$ with the analogous quantity in which $\tilde{\rho}_{1}$ and
$\tilde{\rho}_{2}$ run as independent Brownian motions.

In the simplest case in which $(\tilde{\rho}_{1},\tilde{\rho}_{2})$
is $\sqrt{2}$
times a planar Brownian motion started at $(\tilde{\rho}_{0},\tilde
{\rho}_{0})$ the
quantity $\E[f(\tilde{\rho}_{1,s},\tilde{\rho}_{2,s}),s<\tilde
{\OldTau}]$ is simply
$\phi(s,\tilde{\rho}_{0},\tilde{\rho}_{0})$, with $\phi$ being
the solution to the
following PDE on the upper-right quadrant $\Omega=\{ (x,y)\in\Re
^{2},x,y>0\}$:
%
\begin{equation}
\label{he8} \cases{ \partial_{t} \phi=\Delta\phi,
\vspace*{3pt}\cr
\phi
\bigl(t,(x,y)\bigr)=0, &\quad $(x,y)\in\partial\Omega$,
\vspace*{3pt}\cr
\phi\bigl(0,(x,y)
\bigr)=f(x,y), &\quad $(x,y)\in\Omega$.}
\end{equation}
This solution can be written in an explicit form in terms of the heat
kernel, which we discuss now. On the half line, the heat kernel for the
Laplacian with the Dirichlet boundary condition is given by
\[
h_{t}(x,y)=\frac{1}{\sqrt{4\pi t}} \bigl(e^{-\sfrac
{(x-y)^{2}}{4t}}-e^{-\sfrac{(x+y)^{2}}{4t}}
\bigr)
\]
for all $x,y,t>0$. On $\Omega$, the heat kernel with the Dirichlet
boundary condition is simply
\[
\mathbf{h}_{t}\bigl((x_{1},x_{2}),(y_{1},y_{2})
\bigr)=h_{t}(x_{1},y_{1})h_{t}(x_{2},y_{2}).
\]
Turning back to the PDE (\ref{he8}), the solution is given by
\[
\phi(t,x,y)=\int_{0}^{\infty}\int
_{0}^{\infty}\mathbf{h}_{t}
\bigl((x,y),(x_{1},y_{1})\bigr)f(x_{1},y_{1})\,dx_{1}
\,dy_{1}.
\]
For the case we are most interested in, namely $f(x,y)=x$, the solution
above can be computed as
\[
\phi(s,x,y)=x \Phi\biggl(\frac{y}{\sqrt{s}} \biggr)\qquad\mbox{with
}\Phi(x)=
\frac{1}{\sqrt{\pi}}\int_{0}^{y}e^{-u^{2}/4}\,du.
\]

Now we go back to the system (\ref{he6}) and take $\phi(s-\OldSigma
,\tilde{\rho}_{1,\OldSigma},\tilde{\rho}_{2,\OldSigma})$ as a
semi-martingale which, from It\^o's formula and $\partial_{t}\phi
=\Delta\phi$, becomes
\begin{eqnarray*}
&& d\phi(s-\OldSigma,\tilde{\rho}_{1,\OldSigma},\tilde{\rho}_{2,\OldSigma})
\\
&&\qquad =
\partial_{x}\phi \,d\tilde{\rho}_{1,\OldSigma
}+\partial_{y}
\phi \,d\tilde{\rho}_{2,\OldSigma}-\partial_{t}\phi \,d\OldSigma
\\
&&\quad\qquad{}+
\frac{1}{2}\partial_{xx}^{2}\phi \,d\langle\tilde{\rho
}_{1} \rangle_{\OldSigma}+\partial_{xy}^{2}
\phi \,d\langle\tilde{\rho}_{1},\tilde{\rho}_{2}
\rangle_{\OldSigma}+\frac
{1}{2}\partial_{yy}^{2}
\phi \,d\langle\tilde{\rho}_{2} \rangle_{\OldSigma}
\\
&&\qquad =M_{\OldSigma} + O(1) \biggl(\Phi\biggl(\frac{\tilde{\rho
}_{2,\OldSigma}}{\sqrt{s-\OldSigma}} \biggr)+
\frac{\tilde{\rho
}_{1,\OldSigma}}{\sqrt{s-\OldSigma}}\Phi' \biggl(\frac{\tilde{\rho
}_{2,\OldSigma}}{\sqrt{s-\OldSigma}} \biggr) \biggr)\,d
\OldSigma
\\
&&\quad\qquad{} + \frac{\tilde{\rho}_{1,\OldSigma}O(\tilde{\rho}_{2,\OldSigma
})\Phi''(\sfrac{\tilde{\rho}_{2,\OldSigma}}{\sqrt{s-\OldSigma
}})}{s-\OldSigma}\,d\OldSigma+\frac{O(\tilde{\rho}_{1,\OldSigma
}+\tilde{\rho}_{2,\OldSigma})}{\sqrt{s-\OldSigma}}\Phi' \biggl(
\frac{\tilde{\rho}_{2,\OldSigma}}{\sqrt{s-\OldSigma}} \biggr)\,d\OldSigma,
\end{eqnarray*}
where $M_{\OldSigma}$ is a martingale. Since $\Phi'$ and $y\Phi
''(y)$ are bounded, we deduce that the drift in the above is bounded in
absolute value by $\frac{C(\tilde{\rho}_{1,\OldSigma}+\tilde{\rho
}_{2,\OldSigma})}{\sqrt{s-\OldSigma}}$. Now replacing $\OldSigma$
by $\OldSigma\wedge\tilde{\OldTau}\wedge\tilde{\zeta}$ and
evaluating at $\OldSigma=0$ and $\OldSigma=s$, we are led to
\begin{eqnarray*}
\E[\tilde{\rho}_{1,s},s<\tilde{\OldTau}\wedge\tilde{\zeta}]&\le&\E
\bigl[\phi(s-s\wedge\tilde{\OldTau}\wedge\tilde{\zeta},\tilde{
\rho}_{1,s\wedge\tilde{\OldTau}\wedge\tilde{\zeta}},\tilde{\rho
}_{2,s\wedge\tilde{\OldTau}\wedge\tilde{\zeta}})\bigr]
\\
&\le& \phi(s,\tilde{
\rho}_{0},\tilde{\rho}_{0})+C\E\biggl[ \int
_{0}^{s\wedge\tilde{\OldTau}\wedge\tilde{\zeta}}\frac{\tilde
{\rho}_{1,\OldSigma}+\tilde{\rho}_{2,\OldSigma}}{\sqrt
{s-\OldSigma}}\,d\OldSigma\biggr]
\\
&\le& C\tilde{\rho}_{0} \Phi\biggl(\frac{\tilde{\rho}_{0}}{\sqrt
{s}} \biggr)+C\int
_{0}^{s}\frac{\E[\tilde{\rho}_{1,\OldSigma
}+\tilde{\rho}_{2,\OldSigma},\OldSigma<\tilde{\OldTau}\wedge
\tilde{\zeta}]}{\sqrt{s-\OldSigma}}\,d\OldSigma.
\end{eqnarray*}
Denote for simplicity $f(s)=\E[\tilde{\rho}_{1,s},s<\tilde{\OldTau
}\wedge\tilde{\zeta}]$ and $g(s)=C\tilde{\rho}_{0} \Phi
(\frac{\tilde{\rho}_{0}}{\sqrt{s}} )$. Now condition (\ref
{he7}) implies for all $s\in[0,1\wedge t]$ that
%
\begin{equation}
\label{he9} f(s)\le g(s)+C\int_{0}^{s}
\frac{f(\OldSigma)}{\sqrt{s-\OldSigma
}}\,d\OldSigma.
\end{equation}

This functional inequality is interesting enough to be treated
separately, and so we do this formally in the following result.
Incidentally, this also appears in renewal theory, but we were not able
to pinpoint exactly this statement in the literature.

%
\begin{Lemma}\label{hel1}
Assume $f,g\dvtx[0,t]\to[0,\infty)$ are bounded, continuous functions
such that for all $s\in[0,1\wedge t]$
%
\begin{equation}
\label{he10} f(s)\le g(s)+C\int_{0}^{s}
\frac{f(\OldSigma)}{\sqrt{s-\OldSigma
}}\,d\OldSigma.
\end{equation}
If $g(s)\le C\rho^{2}/\sqrt{s}$ for all $s\in[0,1\wedge t]$, then
\[
f(s)\le C\rho^{2}/\sqrt{s}\qquad\mbox{for all }s\in(0,1\wedge t].
\]
\end{Lemma}

\begin{pf} Rewrite (\ref{he10}) in the form
\[
f(s)\le g(s)+C\int_{0}^{s}\frac{f(\OldSigma)}{\sqrt{s-\OldSigma
}}\,d
\OldSigma=g(s)+C\sqrt{s}\int_{0}^{1}
\frac{f(sw)}{\sqrt{1-w}}\,dw.
\]
Now introduce the random variable $W$ with density $\frac{1}{2\sqrt
{1-w}}$ and observe that the right-hand side of the above equation
becomes $g(s)+C\sqrt{s}\E[f(sW)]$. Hence, the inequality at hand can
be rewritten as
\[
f(s)\le g(s)+C\sqrt{s}\E\bigl[f(sW)\bigr].
\]
Iterating this inequality, one can prove that if we pick an i.i.d.
sequence $W_{1},W_{2},\dots$ with the same distribution as $W$, then
for any $n\ge1$,
\begin{eqnarray*}
f(s)&\le&\sum_{k=0}^{n}(C
\sqrt{s})^{k}\E\bigl[\sqrt{W_{1}}\sqrt{W_{1}W_{2}}
\cdots\sqrt{W_{1}W_{2}\cdots W_{k-1}}g(sW_{1}W_{2}
\cdots W_{k})\bigr]
\\
&&{}+(C\sqrt{s})^{n+1}\E\bigl[\sqrt{W_{1}}
\sqrt{W_{1}W_{2}}\cdots\sqrt{W_{1}W_{2}
\cdots W_{n}}f(sW_{1}W_{2}\cdots W_{n+1})
\bigr].
\end{eqnarray*}
The random variable $W$ has moments
\[
\E\bigl[W^{k}\bigr]=\frac{\sqrt{\pi}\Gamma(k+1)}{2\Gamma(k+3/2)}\qquad
\mbox{for all }k>-1.
\]
Particularly important is the case of $k=-1/2$, so that $\frac
{1}{\sqrt{W}}$ is integrable, and in fact $\E[1/\sqrt{W}]=\pi/2$.
It is an elementary task to obtain from this that, for some constant $C>0$,
\[
\E\bigl[W^{k}\bigr]\le C/\sqrt{k}\qquad\mbox{for all }k>0.
\]
Since $g$ is bounded, the series
\[
\sum_{k=0}^{\infty}(C\sqrt{s})^{k}
\E\bigl[\sqrt{W_{1}}\sqrt{W_{1}W_{2}}\cdots
\sqrt{W_{1}W_{2}\cdots W_{k-1}}g(sW_{1}W_{2}
\cdots W_{k})\bigr]
\]
is absolutely convergent and
\[
(C\sqrt{s})^{n}\E\bigl[\sqrt{W_{1}}\sqrt
{W_{1}W_{2}}\cdots\sqrt{W_{1}W_{2}\cdots W_{n}}f(sW_{1}W_{2}\cdots
W_{n+1})\bigr]
\]
goes to $0$ as $n\to\infty$. Consequently,
\[
f(s)\le\sum_{k=0}^{\infty}(C
\sqrt{s})^{k}\E\bigl[\sqrt{W_{1}}\sqrt{W_{1}W_{2}}
\cdots\sqrt{W_{1}W_{2}\cdots W_{k-1}}g(sW_{1}W_{2}
\cdots W_{k})\bigr].
\]
If $g(s)\le C\rho^{2}/\sqrt{s}$, the above yields
\[
f(s)\le C\frac{\rho^{2}}{\sqrt{s}}\sum_{k=0}^{\infty}(C
\sqrt{s})^{k}\E\biggl[\frac{\sqrt{W_{1}}\sqrt{W_{1}W_{2}}\cdots\sqrt
{W_{1}W_{2}\cdots W_{k-1}}}{\sqrt{W_{1}W_{2}\cdots W_{k}}} \biggr]=\frac
{C\rho^{2}}{\sqrt{s}},
\]
where we used the decay of the moments of $W$ together with the fact
that $1/\sqrt{W}$ is integrable to justify that the series is convergent.
\end{pf}

The rest of the proof of (\ref{he20}) follows now from Lemma~\ref{hel1}.

We now turn our attention to (\ref{he21}) and observe that, from
(\ref{he6}), we easily obtain that
\begin{eqnarray*}
d(\tilde{\rho}_{1}\tilde{\rho}_{2})&=&\tilde{
\rho}_{1}\,d\tilde{\rho}_{2}+\tilde{\rho}_{2}\,d
\tilde{\rho}_{1}+d\langle\tilde{\rho}_{1},\tilde{
\rho}_{2} \rangle_{\OldSigma}
\\
&=&dM_{\OldSigma}+O(\tilde{\rho}_{1}+\tilde{\rho}_{2})\,d
\OldSigma
\end{eqnarray*}
with $M_{\OldSigma}$ a martingale. Using this at the times $\OldSigma
=0$ and $\OldSigma=s\wedge\tilde{\OldTau}\wedge\tilde{\zeta}$
with $0\le s\le1\wedge t$ and integrating, we get
\begin{eqnarray*}
\tilde{r}_{0}^{2} \Prob(\tilde{\zeta}<s\wedge\tilde{
\OldTau}) &\le&\E[\tilde{\rho}_{1,s\wedge\tilde{\OldTau}\wedge\tilde
{\zeta}}\tilde{\rho}_{2,s\wedge\tilde{\OldTau}\wedge\tilde
{\zeta}}] \le
\tilde{\rho}^{2}_{0}+C\E\biggl[\int_{0}^{s\wedge
\tilde{\OldTau}\wedge\tilde{\zeta}}(
\tilde{\rho}_{1,\OldSigma
}+\tilde{\rho}_{2,\OldSigma})\,d\OldSigma\biggr]
\\
&\le&\tilde{\rho}^{2}_{0}+C\int_{0}^{s}
\E\bigl[(\tilde{\rho}_{1,\OldSigma}+\tilde{\rho}_{2,\OldSigma
}),\OldSigma<
\tilde{\OldTau}\wedge\tilde{\zeta}\bigr]\,d\OldSigma
\\
&&\hspace*{-33pt} \stackrel{\mbox{\fontas{(\ref{he7}) and (\ref{he20})}}}{\le} \tilde{\rho
}^{2}_{0}+C\int_{0}^{s}
\frac{\tilde{\rho}_{0}^{2}}{\sqrt{\OldSigma
}} \,d\OldSigma=C\tilde{\rho}_{0}^{2},
\end{eqnarray*}
which is what we needed.
\end{pf}

Now we go back to (\ref{he5}). We cannot use Theorem~\ref{hep1} to
conclude that $\E[\rho_{1,s},s<\OldTau\wedge\zeta]\le C\rho
_{0}^{2}/\sqrt{s}$ because the equations satisfied by $\rho_{1}$ and
$\rho_{2}$ are not of the form (\ref{he6}). However, if we take
$\tilde{\rho}_{1,s}=\rho_{1,s}e^{\bar{p}_{t-s}(\lambda_{s})}$,
$\tilde{\rho}_{2,s}=\rho_{2,s}e^{\bar{p}_{t-s}(\lambda_{s})}$,
then (\ref{tc1}) and an application of It\^o's formula (followed by
several rearrangements) show that $\tilde{\rho}_{1}$ and $\tilde
{\rho}_{2}$ do satisfy (\ref{he6}). In addition, Corollary~\ref
{tcc} combined with the fact that $e^{\bar{p}_{t-s}(\lambda_{s})}$
is bounded shows that (\ref{he7}) is also satisfied. Therefore,
according to Theorem~\ref{hep1}, $\E[\tilde{\rho}_{1,s},s<\OldTau
\wedge\zeta]\le C\rho_{0}^{2}/\sqrt{s}$ and this in turn implies
\[
\E[\rho_{1,s},s<\OldTau\wedge\zeta]\le C\rho^{2}_{0}/
\sqrt{s}\quad\mbox{and}\quad\int_{0}^{s}\E[
\rho_{1,u},u<\OldTau\wedge\zeta]\,du\le C\rho^{2}_{0}
\sqrt{s}.
\]

Using the preceding in (\ref{he5}), we write the resulting equation as
%
\begin{eqnarray}
\bigl\llvert\bar{p}_{t}(x)-2\bar{p}_{t}(z)+
\bar{p}_{t}(y)\bigr\rrvert\le c\rho_{0}^{2}
\frac{e^{-Ct}}{\sqrt{s}}+c\rho_{0}^{2}\int_{0}^{s}
\frac{H(t-u)}{\sqrt{u}}\,du \nonumber
\\
\eqntext{\mbox{for any }s\in[0,1\wedge t].}
\end{eqnarray}
Now dividing both sides by $\rho_{0}^{2}$ and then letting $\rho_{0}$
tend to $0$, we arrive at
\[
H(t)\le c\frac{e^{-Ct}}{\sqrt{s}}+c\int_{0}^{s}
\frac{H(t-u)}{\sqrt
{u}}\,du \qquad\mbox{for any } s\in[0,1\wedge t].
\]
From here, the rest is taken care of by the following lemma.

%
\begin{Lemma}\label{hel2} If $H\dvtx[0,\infty)\to[0,\infty)$ is a
continuous function such that, for some constant $C>0$,
%
\begin{equation}
\label{he11} H(t)\le c \biggl(\frac{e^{-Ct}}{\sqrt{s}}+\int_{0}^{s}
\frac
{H(t-u)}{\sqrt{u}}\,du \biggr), \qquad0<s\le1\wedge t,
\end{equation}
then there are constants $k,K>0$ such that
\[
H(t)\le Ke^{-kt} \qquad\mbox{for all }t>0.
\]
\end{Lemma}

\begin{pf} It suffices to concentrate on the case $t\ge1$. The
strategy is similar to the one for proving Lemma~\ref{gel1} with a
few tweaks.

Let $m_{n}=\sup_{t\in[n,n+1]}H(t)$ and $M_{n}=\sup_{t\in
[n-1,n+1]}H(t)$. Clearly, $m_{n}\le M_{n}$ and $M_{n}$ is either
$m_{n}$ or $m_{n-1}$.

Now, if we take the $t$ which maximizes $H(t)$ on $[n,n+1]$ and use
(\ref{he11}), we get that for some constant $C>0$ and any $s\in[0,1]$,
\[
m_{n}\le c \biggl(\frac{e^{-Cn}}{\sqrt{s}}+\sqrt{s}M_{n}
\biggr).
\]
We want to minimize the right-hand side of the above expression over
$s\in[0,1]$. For any $a,b>0$, the minimum of $a/\sqrt{s}+b\sqrt{s}$
with $s\in[0,1]$ is attained at $\frac{a}{b}\wedge1$. Hence,
\[
m_{n}\le c \biggl( \frac{e^{-Cn}}{\sqrt{\sfrac{e^{-Cn}}{M_{n}}}\wedge
1}+ M_{n} \biggl(
\sqrt{\frac{e^{-Cn}}{M_{n}}}\wedge1 \biggr) \biggr).
\]

Now, for each given $n$, we have one of the following two cases:
\begin{longlist}[(2)]
\item[(1)] \textit{Case}: $e^{-Cn/2}\le M_{n}$. This leads first to
$e^{-Cn}/M_{n}<e^{-Cn/2}<1$, and then to
\[
m_{n}\le2ce^{-Cn/2}\sqrt{M_{n}}\le2c
e^{-Cn/4}M_{n}.
\]
This is enough to conclude that for a large $n_{1}$ (e.g., such
that $2ce^{-Cn_{1}/4}<1/2$) and $n\ge n_{1}$ one gets $m_{n}\le
M_{n}/2$, which means that we cannot have $M_{n}=m_{n}$ unless
$m_{n}=m_{n-1}=0$. Hence $M_{n}=m_{n-1}$, which in turn implies that
for some $k>0$
{\renewcommand{\theequation}{*}
\begin{equation}\label{eq*3}
m_{n}\le e^{-k}m_{n-1}
\qquad\mbox{if }n\ge n_{1}.
\end{equation}}\setcounter{equation}{109}%

\item[(2)] \textit{Case}: $M_{n}\le e^{-Cn/2}$. This already yields
{\renewcommand{\theequation}{**}
\begin{equation}\label{eq**3}
m_{n}\le e^{-kn}.
\end{equation}}\setcounter{equation}{109}%
Notice that we can arrange the constant $k>0$ to be the same in (\ref{eq*3}) and
(\ref{eq**3}) simply by taking the smaller.
\end{longlist}

By combining (\ref{eq*3}) and (\ref{eq**3}), we can show that $m_{n}$ decays
exponentially fast. Indeed, if there is $n_{2}\ge n_{1}$ for which the
second alternative holds, then $m_{n_{2}}\le e^{-k n_{2}}$. Then an
easy induction and use of both alternatives yields that $m_{n}\le
e^{-kn}$ for all $n\ge n_{2}$. On the other hand, if there is no such
$n_{2}$, that means the first alternative holds, and this means that
$m_{n}\le m_{n-1}e^{-k}$ for all $n\ge n_{1}$. This then results in
$m_{n}\le m_{n_{1}}e^{-k(n-n_{1})}$, and thus the exponential decay
follows again.
\end{pf}

This completes the proof of Theorem~\ref{het}.
\end{pf}

\section{$C^{k}$ convergence of \texorpdfstring{$\bar{p}$}{$bar{p}$} on surfaces with \texorpdfstring{$\chi(M)\le0$}{$chi(M)<=0$}}\label{s10}

In the previous two sections, using the same notation and assumptions,
we proved there exists a constant $C>0$ such that
%
\begin{eqnarray}\label{ho1}
\sup_{x\in M}\bigl\llvert\bar{p}_{t}(x)
\bigr\rrvert+\sup_{x\in
M}\bigl\llvert\nabla\bar{p}_{t}(x)\bigr\rrvert+\sup_{x\in M}\bigl
\llvert
\operatorname{Hess}\bar{p}_{t}(x)\bigr\rrvert\le ce^{-Ct}
\nonumber\\[-10pt]\\[-10pt]
\eqntext{\mbox{for all }t>0.}
\end{eqnarray}
Alternatively stated, $\bar{p}$ converges to $0$ exponentially fast in
the $C^{2}$-norm. In particular, this proves that the metric $g_{t}$
converges to the constant curvature metric $h$ in the $C^{2}$-topology,
and thus the curvature of $g_t$ converges uniformly to a constant.

We now complete our discussion of the convergence to the constant
curvature metric by extending this to $C^\infty$-convergence. The
culmination of the last several sections is the following theorem.

%
\begin{teo}\label{teoCInfyConv}
Let $M$ be a smooth, compact surface with $\chi(M)\le0$, with a
reference metric $h$ of constant curvature $0$ or $-1$, and let $g_0$
be a smooth initial metric in the same conformal class as $h$ and with
the same area. Then if we let $\bar{p}_t$ for $t\in[0,\infty)$ be
the associated solution to the normalized Ricci flow [as given in
equation (\ref{EqnNRFp})], we have that
\[
\bar{p}_t \rightarrow0 \qquad\mbox{in }C^\infty, \mbox{ exponentially fast,}
\]
in the sense that this convergence takes place exponentially fast in
the $C^k$-norm for all positive integers $k$. Stated differently, if
$g_t$ for $t\in[0,\infty)$ is the family of solution metrics to the
normalized Ricci flow (and so the metrics corresponding to~$\bar
{p}_t$), then $g_t\rightarrow h$ in $C^\infty$, exponentially fast.
\end{teo}

\begin{pf}
We start with the equation
\[
\partial_{t}\bar{p}=e^{-2\bar{p}_{t}}\Delta\bar{p}_{t}+r
\bigl(1-e^{-2\bar{p}_{t}}\bigr).
\]

Now we can assume, by induction, that all derivatives of $\bar{p}_{t}$
of order $l$ with $0\le l\le k-1$ decay to 0 exponentially fast as $t$
goes to infinity. In light of the $C^{2}$-convergence, we may assume
that $k\ge3$.

Taking the $k$th derivative $\bar{p}_{t}^{(k)}=\nabla^{(k)}\bar
{p}_{t}$, after commuting the Laplacian with the covariant derivative
we obtain
%
\begin{equation}
\label{ho2} \partial_{t}\bar{p}_{t}^{(k)}=e^{-2\bar{p}_{t}}
\Delta\bar{p}^{(k)}_{t}+2re^{-2\bar{p}_{t}}
\bar{p}_{t}^{(k)}+Q^{(k)}_{t},
\end{equation}
where $Q^{k}$ depends on the lower order derivatives of $\bar{p}_{t}$,
and thus we may assume by induction that for $k\ge2$,
%
\begin{equation}
\bigl\llvert Q^{(k)}_{t}\bigr\rrvert\le ce^{-Ct}.
\end{equation}

The idea now is to write a Feynman--Kac formula for the solution to
(\ref{ho2}) and get the estimates from this. Indeed, notice that if
$x_{\sigma}$ is the time changed Brownian motion starting at $x$ which
is defined by (\ref{e1b}), then
%
\begin{eqnarray}\label{ho3}
&& \exp\biggl(2r\int_{0}^{\sigma}e^{-2\bar{p}_{t-u}(x_{u})}\,du
\biggr)\mathcal{T}_{\sigma}\bar{p}^{(k)}_{t-\sigma}(x_{\sigma})
\nonumber\\[-8pt]\\[-8pt]\nonumber
&&\qquad{} +
\int_{0}^{\sigma}\exp\biggl(2r\int
_{0}^{u}e^{-2\bar{p}_{t-v}(x_{v})}\,dv \biggr)
\mathcal{T}_{u}Q^{(k)}_{t-u}(x_{u})\,du
\end{eqnarray}
is a martingale, where $\mathcal{T}_{u}$ is the extension to tensors
of the parallel transport (with respect to the underlying metric $h$)
along the path $x\mid_{[u,0]}$ from $x_{u}$ to $x_{0}=x$. From the
technical side, this expression can be seen in a clear way by lifting
the equation~(\ref{ho2}) to the orthonormal frame bundle, where the
lift of $\bar{p}_{t}^{(k)}$ takes values in a tensor product space of
a fixed 2-dimensional Euclidean space. This is standard in stochastic
analysis and we do not belabor it.

One result of equation (\ref{ho3}) is that evaluation at $\sigma=0$
and $\sigma=t$ yields
%
\begin{eqnarray}
\label{ho4} \bar{p}_{t}^{(k)}(x)&=&\E\biggl[\exp
\biggl(2r\int_{0}^{t}e^{-2\bar{p}_{t-u}(x_{u})}\,du \biggr)
\mathcal{T}_{t}\bar{p}^{(k)}_{0}(x_{t})
\biggr]
\nonumber\\[-8pt]\\[-8pt]\nonumber
&&{} +\E\biggl[\int_{0}^{t}\exp\biggl(2r\int
_{0}^{u}e^{-2\bar{p}_{t-v}(x_{v})}\,dv \biggr)\mathcal
{T}_{u}Q^{(k)}_{t-u}(x_{u})\,du \biggr].
\end{eqnarray}
Notice the first consequence of this, namely that $\llvert\bar
{p}^{(k)}_{t}\rrvert$ is bounded for $r\le0$ (which is the case
under consideration).
We consider separately the cases $r=-1$ and $r=0$.\vspace*{6pt}

\textit{Case}: $r=-1$. From the exponential decay of $\bar{p}_{t}$ and the
induction hypothesis (the decay of $Q_{t}^{(k)}$), it is easy to see that
\[
\bigl\llvert\bar{p}_{t}^{(k)}(x)\bigr\rrvert\le
ce^{-Ct}\qquad\mbox{for all }t\ge0,
\]
and thus the induction is done.\vspace*{6pt}

\textit{Case}: $r=0$. For the flat case, we still learn from (\ref{ho4}) that
$\bar{p}_{t}^{(k)}(x)$ is uniformly bounded in $t$ and $x$. Since the
curvature of the underlying metric $h$ is $0$, we know (cf. \cite{KN1},
Theorem 8.1, Chapter II) that the holonomy groups are trivial
(perhaps after lifting to the orientation cover). Stated differently,
the parallel transport along loops is the identity.

To finish the argument, we are going to use the coupling technique we
already exploited for the gradient estimates. Start with a fixed point
$x\in M$ and a unit vector $\xi$, and write
%
\begin{equation}
\label{ho7} \bar{p}^{k}_{t}(x)\xi=\nabla_{\xi}
\bar{p}^{(k-1)}_{t}=\lim_{h\to
0}
\frac{\mathcal{T}_{h}\bar{p}^{(k-1)}_{t}(\gamma(h))-\bar{p}^{(k-1)}_{t}(x)}{h},
\end{equation}
where $\mathcal{T}_{h}$ is the parallel transport from $T_{\gamma
(h)}$ to $T_{x}$ along the geodesic $\gamma$ started at $x$ with
initial velocity $\xi$.

Now we use the martingale representation (\ref{ho3}) with $k$
replaced by $(k-1)$ to see that, for $x$ and $y$ close enough and
$\mathcal{T}$ the parallel transport from $T_{y}$ to $T_{x}$ along the
minimizing geodesic,
\begin{eqnarray*}
&& \mathcal{T}\bar{p}^{(k-1)}_{t}(y)-\bar{p}^{(k-1)}_{t}(x)
\\
&&\qquad =
\E\bigl[ \mathcal{T} \mathcal{T}_{\sigma}\bar{p}^{(k-1)}_{t-\sigma
}(y_{\sigma})-
\mathcal{T}_{\sigma}\bar{p}^{(k-1)}_{t-\sigma
}(x_{\sigma})
\bigr]
\\
&&\qquad\quad{} -\E\biggl[\int_{0}^{\sigma} \bigl(\mathcal{T}
\mathcal{T}_{u}Q^{(k-1)}_{t-u}(y_{u})-
\mathcal{T}_{u}Q^{(k-1)}_{t-u}(x_{u})
\bigr)\,du \biggr].
\end{eqnarray*}
Take $t\ge1$ and let $\sigma$ be $1\wedge\tau$ with $\tau$ the
coupling time of $x_{u}$ and $y_{u}$ which run mirror coupled. Now,
because the holonomy group is trivial, it follows that
\begin{eqnarray*}
&& \E\bigl[ \mathcal{T} \mathcal{T}_{1\wedge\tau}\bar
{p}^{(k-1)}_{t-1\wedge\tau}(y_{1\wedge\tau})-
\mathcal{T}_{1\wedge
\tau}\bar{p}^{(k-1)}_{t-1\wedge\tau}(x_{1\wedge\tau})
\bigr]
\\
&&\qquad =\E\bigl[ \mathcal{T} \mathcal{T}_{1}\bar{p}^{(k-1)}_{t-1}(y_{1})-
\mathcal{T}_{1}\bar{p}^{(k-1)}_{t-1}(x_{1}),1<
\tau\bigr].
\end{eqnarray*}
From this and the exponential decay of $\bar{p}_{t}^{(k-1)}$ and
$Q_{t}^{(k-1)}$, we have
\[
\bigl\llvert\mathcal{T}\bar{p}^{(k-1)}_{t}(y)-\bar
{p}^{(k-1)}_{t}(x)\bigr\rrvert\le e^{-Ct}\Prob(1<
\tau) +e^{-Ct}\int_{0}^{1}\Prob(u<\tau)
\,du.
\]
Finally, using the estimate (\ref{ge20}), we get
\begin{eqnarray*}
&& \bigl\llvert\mathcal{T}\bar{p}^{(k-1)}_{t}(y)-\bar
{p}^{(k-1)}_{t}(x)\bigr\rrvert
\\
&&\qquad \le e^{-Ct}\,d(x,y)
+e^{-Ct}\int_{0}^{1}\frac{d(x,y)}{\sqrt{u}}
\,du=Ce^{-Ct}\,d(x,y).
\end{eqnarray*}
Now taking $y=\gamma(h)$ and considering the limit as $h$ goes to $0$
leads to
\[
\bigl\llvert\bar{p}^{(k)}_{t}(x)\xi\bigr\rrvert\le
ce^{-Ct}
\]
for any unit vector $\xi$, which implies the exponential convergence
of $\bar{p}^{(k)}_{t}$.
\end{pf}

\section*{Acknowledgements} We would like to express our true
appreciation and gratitude for the scholarly, careful, pertinent and
sharp remarks of the anonymous reviewers which transformed the present
paper into a much better one.

Ionel Popescu thanks Sergiu Moroianu for very useful discussions on
the geometry of surfaces particularly enlightening being the
uniformization of surfaces arguments from the manuscript \cite{sergiu}.





%

\printaddresses
\end{document}